\let\oldsqrt\sqrt
\def\sqrt{\mathpalette\DHLhksqrt}
\def\DHLhksqrt#1#2{%
\setbox0=\hbox{$#1\oldsqrt{#2\,}$}\dimen0=\ht0
\advance\dimen0-0.2\ht0
\setbox2=\hbox{\vrule height\ht0 depth -\dimen0}%
{\box0\lower0.4pt\box2}}
\newcommand{\R}{\mathbb{R}} 
\newcommand{\N}{\mathbb{N}} 
\newcommand{\Z}{\mathbb{Z}} 
\newcommand{\dist}{\textnormal{dist}} 
\newcommand{\Ds}{(-\Delta)^s}
\newcommand{\cE}{{\mathcal E}}
\newcommand{\cH}{{\mathcal H}}
\newcommand{\hf}{{\,{}_2F_1}}
\newcommand{\eps}{\varepsilon}
\newtheorem{theorem}{Theorem}[section]
\newtheorem{lemma}[theorem]{Lemma}
\newtheorem{proposition}[theorem]{Proposition}
\newtheorem{corollary}[theorem]{Corollary}
\theoremstyle{remark}
\newtheorem{remark}[theorem]{Remark}
\theoremstyle{definition}
\numberwithin{equation}{section}
\title{Fractional Laplacians on ellipsoids}
\author{
Nicola Abatangelo\footnote{Institut f\"ur Mathematik, Goethe-Universit\"at Frankfurt am Main, Robert-Mayer-Stra\ss e 10, 60325 Frankfurt am Main, Germany. \textit{Email addresses:\ }\ \texttt{abatangelo@math.uni-frankfurt.de} \ and \ \texttt{jarohs@math.uni-frankfurt.de}. 
NA is supported by the Alexander von Humboldt Foundation.\protect\label{goethe}} ,
\ Sven Jarohs\textsuperscript{\normalfont\ref{goethe}}, and
Alberto Salda\~{n}a\footnote{Instituto de Matem\'aticas, Universidad Aut\'onoma de M\'exico, Circuito Exterior, Ciudad Universitaria, 04510 Coyoac\'an, Ciudad de M\'exico, M\'exico. \textit{Email address:}\ \texttt{alberto.saldana@im.unam.mx}.}
}
\date{ }
\begin{document}

\maketitle

{\let\thefootnote\relax\footnote{\textit{MSC2020}: 35R11, 35B50, 35G15, 35C05, 35S15. }
\footnote{\textit{Keywords}: positivity preserving property, torsion function, point inversion.}
\addtocounter{footnote}{-2}}

\begin{abstract}
We show explicit formulas for the evaluation of (possibly higher-order) fractional Laplacians~$\Ds$ of some functions supported on ellipsoids. In particular, we derive the explicit expression of the torsion function and give examples of~$s$-harmonic functions.  As an application, we infer that the weak maximum principle fails in eccentric ellipsoids for~$s\in(1,\sqrt{3}+3/2)$ in any dimension~$n\geq 2$. We build a counterexample  in terms of the torsion function times a polynomial of degree~$2$. Using point inversion transformations, it follows that a variety of bounded and unbounded domains do not satisfy positivity preserving properties either and we give some examples. 
\end{abstract}

\section{Introduction}

The fractional Laplacian~$(-\Delta)^s$,~$s>0$, is a pseudodifferential operator with Fourier symbol~$|\cdot|^{2s}$ which can be evaluated pointwisely via a hypersingular integral (see~\eqref{Ds:def} below).  This operator has many applications in mathematical modeling and the set of solutions of boundary value problems involving the fractional Laplacian has a rich and complex mathematical structure, see~\cite{fth,ga,bv}.

One of the main obstacles in the study of this operator is the difficulty of evaluating explicitly~$(-\Delta)^s$, even on simple functions, see for example~\cite{dyda,meijer,dcds,ccm} and the references therein for some of the few exceptions that are available in the literature. For the same reason, explicit solutions of boundary value problems are rare. 

In this paper, we show some explicit formulas for the evaluation of the fractional Laplacian of polynomial-like functions supported in ellipsoids. Our first result concerns the explicit expression of the torsion function of an ellipsoid. Let\begin{align*}
 \cH^s_0(\Omega):=\big\{u\in H^s(\R^n)\::\: u=0 \text{ in }\R^n\backslash \Omega \big\} \qquad \text{ for any } s>0
\end{align*}
and~$H^s(\R^n)$ denotes the usual fractional Sobolev space of order~$s>0$ (see, for example,~\cite{proc}, for standard existence and uniqueness results in this setting).  If~$s=m\in\N$, then~$\cH^s_0(\Omega)$ is the usual Sobolev space~$H^m_0(\Omega)$.

\begin{theorem}\label{torsion:thm} 
Let~$n\geq 2$,~$s>0$,~$A\in\R^{n\times n}$ be a symmetric positive definite matrix, and let
\begin{align*}
E:=\{x\in\R^n\::\: Ax\cdot x<1 \}. 
\end{align*}
Then, there is~$\kappa=\kappa(n,s,A)>0$ such that~$u_s:\R^n\to\R$ given by~$u_s(x):=(1-Ax\cdot x)^s_+$ solves pointwisely
\begin{align}\label{t:p}
 \Ds u_s = \kappa\quad \text{ in }E,
\end{align}
and~$u_s$ is the unique (weak) solution of~\eqref{t:p} in~$\cH^s_0(E)$.
\end{theorem}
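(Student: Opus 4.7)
The plan is to verify that $u_s(x) := (1-Ax\cdot x)_+^s$ satisfies \eqref{t:p} pointwise in $E$ for some explicit positive constant $\kappa$, and to conclude identification with the weak torsion function via the standard unique solvability of the fractional Dirichlet torsion problem in $\cH^s_0(E)$ (coercivity of the fractional energy and Lax--Milgram). Membership $u_s \in \cH^s_0(E)$ is routine: $u_s$ is bounded, supported on $\overline E$, and vanishes like $\dist(\cdot,\partial E)^s$ on the boundary, so a truncation--mollification argument (or, for higher $s$, the characterization of $\cH^s_0$ of Grubb together with the explicit boundary behavior of $u_s$) applies.

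The core task is the pointwise identity $(-\Delta)^s u_s \equiv \kappa$ in $E$. First, using that $(-\Delta)^s$ commutes with orthogonal transformations, I reduce to the case of diagonal $A$. Next, in the hypersingular integral representation I apply the change of variables $z = A^{1/2}h$ to rewrite
\[
(-\Delta)^s u_s(x) \;=\; c_{n,s}\,|\det A|^{-1/2}\;\textnormal{P.V.}\int_{\R^n}\frac{w_s(y)-w_s(y+z)}{(z\cdot A^{-1}z)^{(n+2s)/2}}\,dz,\qquad y := A^{1/2}x\in B_1,
\]
where $w_s(y) := (1-|y|^2)_+^s$. In polar coordinates $z = r\omega$, after the rescaling $r = \sqrt{1-|y|^2}\,\tau$, the one-dimensional radial integrand depends on $y$ and $\omega$ only through the scalar $\gamma(y,\omega) := (y\cdot\omega)/\sqrt{1-|y|^2}$, producing a universal function $H(\gamma)$ expressible via Gauss's hypergeometric function $\hf$.

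The main obstacle is then to show that the remaining angular integral
\[
I_A(y) \;:=\; \int_{S^{n-1}}\frac{H(\gamma(y,\omega))}{(\omega\cdot A^{-1}\omega)^{(n+2s)/2}}\,d\omega
\]
is independent of $y\in B_1$. My first attempt would be to expand $H$ as a series in $\gamma^2$ and, for each $k$, compute in closed form the moments $\int_{S^{n-1}}(y\cdot\omega)^{2k}(\omega\cdot A^{-1}\omega)^{-(n+2s)/2}\,d\omega$ via standard anisotropic spherical integrals; the series would then collapse to a constant by a hypergeometric identity. As a parallel route I would explore an analyticity-in-$s$ argument: for integer $s=m$ the identity reduces to the algebraic computation $(-\Delta)^m[(1-Ax\cdot x)^m] \equiv \textnormal{const}$ in the interior of $E$ (where the operator is local), and both sides of \eqref{t:p} being analytic in $s$ on a suitable strip, the general case follows by extension. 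Once constancy is in hand, positivity $\kappa>0$ is read off by evaluating at $x=0$ (where $w_s$ attains its strict maximum), and uniqueness in $\cH^s_0(E)$ follows from Lax--Milgram applied to the fractional Dirichlet form.
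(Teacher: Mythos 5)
Your reduction (diagonalizing $A$, straightening the ellipsoid via $y=A^{1/2}x$, passing to polar coordinates and rescaling the radial variable) is sound and in fact runs parallel to the paper's proof of Theorem~\ref{prop:general laplacians}, which slices the hypersingular integral along directions $\theta\in\partial E_a$ instead; in both cases the choice $\beta=s$ makes all powers of $1-|y|^2$ cancel, so everything hinges on the radial profile. But the step you yourself call ``the main obstacle'' --- the independence of $I_A(y)$ of $y$ --- is left unproved, and neither of your proposed routes closes it. The analyticity-in-$s$ route is invalid as stated: knowing the identity at the integers $s=m$ (where $(-\Delta)^m(1-Ax\cdot x)^m$ is constant because the operator is local and lowers the degree to zero) does not propagate to all $s>0$, since the positive integers have no accumulation point in any strip and, e.g., $\sin(\pi s)$ vanishes on all of them; one would need a Carlson-type theorem and hence growth bounds in $s$ that you neither state nor verify. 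The moment-expansion route is not an argument either: the anisotropic moments $\int_{S^{n-1}}(y\cdot\omega)^{2k}(\omega\cdot A^{-1}\omega)^{-(n+2s)/2}\,d\omega$ genuinely depend on $y$, so the asserted ``collapse by a hypergeometric identity'' is exactly the statement to be proved, not a tool.

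What closes the gap --- and what the paper uses --- is a one-dimensional input you never invoke. Symmetrizing in $\omega\mapsto-\omega$, your radial integral $H(\gamma)+H(-\gamma)$ equals, up to the normalizing constant, the one-dimensional fractional Laplacian of $t\mapsto(1-t^2)_+^{s}$ evaluated at $\gamma/\sqrt{1+\gamma^2}\in(-1,1)$ (write $1-2\tau\gamma-\tau^2=(1+\gamma^2)-(\tau+\gamma)^2$ and use scaling), and by the formula of Dyda--Kuznetsov--Kwa\'{s}nicki \cite{meijer} --- this is~\eqref{ec2} with $\beta=s$, where the hypergeometric factor is identically $1$ --- that quantity is a constant independent of the evaluation point. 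Hence the even part of $H$ is constant, the odd part integrates to zero against the even weight, and $I_A(y)$ is constant with no angular identity needed; this is the paper's argument in slightly different coordinates. Two further defects of the write-up: your representation with first differences $w_s(y)-w_s(y+z)$ only converges for $s\in(0,1)$, whereas the theorem (and its interest) covers all $s>0$, which requires the higher-order differences $\delta_m$ of~\eqref{Ds:def} together with the higher-order version of the 1D formula; likewise your positivity argument for $\kappa$ by evaluating at the maximum $x=0$ only works for $s\in(0,1)$, since $\delta_m u$ has no sign --- for general $s$ positivity must be read off from the explicit constant $\Gamma(1+s)k_{n,s}J_0$. Finally, ``$u_s\in\cH^s_0(E)$ is routine'' undersells a real step: for noninteger $s>1$ the paper proves it via Gagliardo--Nirenberg interpolation combined with $u_s/\dist(\cdot,\partial E)^s\in L^\infty(E)$ and Triebel's characterization; your appeal to Grubb's spaces could be made to work, but it needs to be carried out rather than asserted.
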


Here~$f_+$ denotes the positive part of~$f$. The explicit value of~$\kappa(n,s,a)$ can be computed in terms of hypergeometric functions~$\hf$ (see~\eqref{i},~\eqref{J0}, and~\eqref{torsion}). In particular, for (two-dimensional) ellipses with axes of length~$\frac{1}{\sqrt{a_1}}$ and~$\frac{1}{\sqrt{a_2}}$ we have that
\begin{align*}
 \kappa=4^{s}\Gamma(1+s)^2 a_1^{s+\frac{1}{2}} a_2^{-1/2} \hf\Big(s+1, \frac12; 1;1-\frac{a_1}{a_2}\Big)
    \qquad \text{ for }a_1, a_2>0,
\end{align*}
see Remark~\ref{t:rem}. The name \textit{torsion function} comes from elasticity theory, where~$u_1$ denotes the \textit{Prandtl torsion stress function} describing the deformation of an elastic body subject to surface forces. The function~$u_1$ also has applications in fluid mechanics (modelling the pressure gradient of a flow in a viscous fluid), see~\cite{km} and the references therein. A solution of~\eqref{t:p} in general domains for any~$s>0$ is usually also called torsion function, and its explicit expression is often useful for checking inequalities and to formulate or disprove general conjectures (see, for example,~\cite{km,rj,jsw}).

Theorem~\ref{torsion:thm} relies on the following more general result, which is an extension of~\cite[Corollary 4]{meijer} to ellipsoidal domains. 

\begin{theorem}\label{thm:s+j-intro}
Under the assumptions of Theorem~\ref{torsion:thm}, let~$j\in\Z$,~$j\geq-\lfloor s\rfloor-1$,
and~$u_{s+j}(x)=(1-Ax\cdot x)^{s+j}_+,\ x\in\R^n$. Then~$u_{s+j}$ solves pointwisely
\begin{align*}
\Ds u_{s+j}=f_j \quad\text{in }E, \qquad u_{s+j}=0 \quad\text{in }\R^n\setminus E,
\end{align*}
where~$f_j$ is the polynomial of degree~$(2j)_+$ given by
\begin{align}\label{lapla u_s+j intro}
f_j(x)=\left\lbrace\begin{aligned}
& C_{n,s,j}
\sum_{k=0}^{j} (-1)^k\frac{\Gamma(s+\frac{1}{2}+k)}{\Gamma(\frac{1}{2}+k)}\binom{j}{k}
\int_{\partial E}\frac{\big(u_1(x)+(Ax\cdot\theta)^2\big)^{j-k} \big(Ax\cdot\theta\big)^{2k}}{\big|\theta\big|^{n+2s}|A\theta|}\;d\theta,
& \text{if } j\geq 0, \\
& 0, & \text{if } j\leq -1,
\end{aligned}\right.
\end{align}
and, under the convention~$\Gamma(t)^{-1}=0$ for~$t\in \Z\setminus \N$,
\begin{align*}
C_{n,s,j}=\frac{2^{2s-1}\Gamma(\frac{n}{2}+s)\,\Gamma(1+s+j)}{\pi^{(n-1)/2}\,\Gamma(\frac{1}{2}+s)\,\Gamma(1+j)}.
\end{align*}
\end{theorem}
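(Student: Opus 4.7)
My strategy is to reduce the $n$-dimensional hypersingular integral to a one-dimensional one along each ray through $x$, apply the one-dimensional Meijer/Dyda identity, and rewrite the resulting angular integral as an integral over $\partial E$. I restrict to $j\geq 0$; the case $-\lfloor s\rfloor-1\leq j\leq -1$ then follows at once, since $C_{n,s,j}$ carries the factor $1/\Gamma(1+j)$ and vanishes at every nonpositive integer $j$, matching $f_j\equiv 0$.

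Fix $x\in E$. Starting from the pointwise hypersingular representation of $\Ds$ referenced in the introduction, pass to polar coordinates $y = x + r\omega$, $r\in\R$, $\omega\in S^{n-1}$. Fubini splits the integral into an outer angular integral over $S^{n-1}$ and, for each $\omega$, a one-dimensional fractional Laplacian at $r=0$ of
\[
\phi_\omega(r) := u_{s+j}(x+r\omega) = (A\omega\cdot\omega)^{s+j}\bigl(\eta(\omega)^2-(r+\xi(\omega))^2\bigr)^{s+j}_+,
\]
with $\xi = (Ax\cdot\omega)/(A\omega\cdot\omega)$ and $\eta = \sqrt{(Ax\cdot\omega)^2+u_1(x)(A\omega\cdot\omega)}/(A\omega\cdot\omega)$ obtained from the quadratic $r^2(A\omega\cdot\omega)+2r(Ax\cdot\omega)-u_1(x)=0$, whose roots are the signed distances from $x$ to $\partial E$ along $\omega$. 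By translation and scaling invariance of the 1D fractional Laplacian,
\[
(\Ds_{1D}\phi_\omega)(0) = (A\omega\cdot\omega)^{s+j}\,\eta^{2j}\,\bigl[\Ds_{1D}(1-t^2)^{s+j}_+\bigr]_{t=\xi/\eta}.
\]
The one-dimensional identity (a 1D case of~\cite[Corollary 4]{meijer}) expresses this bracket as a polynomial of degree $2j$ whose expansion produces the binomial sum with $\binom{j}{k}$ and the ratio $\Gamma(s+\tfrac12+k)/\Gamma(\tfrac12+k)$. Substituting back and rearranging powers of $\eta$ and $A\omega\cdot\omega$ yields an angular integrand of the claimed form, while the ratio of the $n$- and $1$-dimensional normalization constants of the hypersingular formula times the 1D prefactor combine to give exactly $C_{n,s,j}$.

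To conclude, I rewrite $\int_{S^{n-1}}d\omega$ as an integral over $\partial E$ via the radial bijection $\omega\mapsto\theta = \omega/\sqrt{A\omega\cdot\omega}$. Using that the outward unit normal at $\theta\in\partial E$ is $A\theta/|A\theta|$, the area formula yields $d\omega = d\sigma(\theta)/(|\theta|^n|A\theta|)$; combined with the identities $A\omega\cdot\omega = |\theta|^{-2}$ and $Ax\cdot\omega = (Ax\cdot\theta)/|\theta|$, this transforms the angular integrand exactly into the kernel and polynomial factors of~\eqref{lapla u_s+j intro}.

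\textbf{Main obstacle.} The hardest step is the 1D computation: although the pieces are known, tracking every $\Gamma$-factor and every power of $\eta$, $\xi$ and $A\omega\cdot\omega$ through the binomial expansion so that the universal constant $C_{n,s,j}$ emerges exactly is delicate. A secondary technical point, relevant for $s\geq 1$, is the rigorous justification of the polar decomposition for the higher-order finite-difference kernel used to define $\Ds$ pointwise; this is standard once absolute integrability of the integrand near $\partial E$ is verified, and is exactly what the restriction $j\geq -\lfloor s\rfloor-1$ ensures.
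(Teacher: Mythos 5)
Your proposal is correct and follows essentially the same route as the paper: a directional decomposition of the hypersingular integral which, after completing the square along each line through $x$, reduces to the one-dimensional identity of Dyda--Kuznetsov--Kwa\'{s}nicki, whose terminating hypergeometric series yields the binomial sum, your change of variables from $S^{n-1}$ to $\partial E$ with Jacobian $d\omega=d\sigma(\theta)/(|\theta|^{n}|A\theta|)$ being exactly the paper's $a$-spherical coordinates in disguise, and the constants indeed combining to $C_{n,s,j}$. The only point to state explicitly is that for $-\lfloor s\rfloor-1\le j\le -1$ one should run the same reduction with the general (non-terminating) one-dimensional formula, which is finite for $x\in E$ since the argument of the hypergeometric function is strictly less than $1$, so that the vanishing prefactor $\Gamma(1+j)^{-1}$ multiplies a finite quantity --- precisely the paper's argument for this case.
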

Theorem~\ref{thm:s+j-intro} can be in turn deduced as a particular case of Theorem~\ref{prop:general laplacians} (see also Corollaries~\ref{cor:torsion} and~\ref{u-s-j-calculations}).
The proof of Theorem~\ref{prop:general laplacians} relies on direct computations mainly inspired by~\cite{dyda,meijer}. 

Using this approach, we can also calculate the evaluation of~$\Ds$ of functions such as
\begin{align}\label{us}
 x_i\, u_s\qquad\text{and}\qquad  x_i^2\, u_s
 \end{align}
for~$i=1,\ldots,n$, see Lemmas~\ref{polynomial-of-degree-one} and~\ref{polynomial-of-degree-two}. With a similar strategy one may compute the fractional Laplacian~$\Ds$ of~$x_i^k\, u_s$ for any~$k\in \N$ (although the length of the expressions increases considerably with~$k$).

These formulas are of independent interest since, as mentioned earlier, there are very few examples of explicit computations regarding fractional Laplacians. However, one of our main motivations in studying these expressions is related to the problem of the \textit{positivity preserving property} (\textit{p.p.p.}, from now on) for higher-order elliptic operators, which we describe next.

We say that the operator~$\Ds$ satisfies a~\textit{p.p.p.} (in~$\Omega$) if
\begin{align}\label{ppp}
 u\geq 0 \text{ a.e. in~$\Omega$, whenever }u\in \cH^s_0(\Omega) \text{ and } (-\Delta)^s u\geq 0 \text{ pointwisely in~$\Omega$}.
\end{align}
Property~\eqref{ppp} is sometimes called \textit{weak maximum principle} and it holds for general domains if~$s\in(0,1]$. The \textit{p.p.p.} is one of the cornerstones in the analysis of linear and nonlinear second-order elliptic problems, and it is involved in results regarding existence of solutions, uniqueness, regularity, symmetry, monotonicity, geometry of level sets, \textit{etc}.

Whenever~$s>1$, the verification of~\eqref{ppp} is a delicate issue; it can be shown that~\eqref{ppp} holds for any~$s>0$ whenever~$\Omega$ is a ball~\cite{na,dg} or a halfspace~\cite{dcds}; however,~\eqref{ppp} does not hold in general.  For~$s>1$, the validity of~\eqref{ppp} depends strongly on the geometry of~$\Omega$, but hitherto there is no way of knowing which domains satisfy~\eqref{ppp} and which ones do not.  The classification of domains satisfying~\eqref{ppp} is a long-standing open problem in the theory of higher-order elliptic equations, see~\cite[Section 1.2]{ggs}.

One way of approaching this problem is to find first some examples of domains where~\eqref{ppp} does \textit{not} hold, and to try to identify a common nature. In particular, the ellipse is known to be incompatible with the~\textit{p.p.p.} whenever it is eccentric enough.  This striking example shows that convexity, smoothness, and symmetry are not properties that guarantee the validity of~\eqref{ppp}.  Next we include a list of references concerned with ellipses and the absence of a~\textit{p.p.p.}:

\begin{enumerate}[label=\it\roman*)]
\item The first available result dates back to~\cite{garabedian} for the bilaplacian~$s=2$ in dimension~$n=2$, where it is shown that an ellipse with axes ratio~$5/3$ does not satisfy~\eqref{ppp}. Later, in~\cite{HJS}, it is mentioned a ratio of about~$1.17$ is enough.
\item In~\cite{nakai-sario} a machinery is designed to extend the two-dimensional examples to higher dimensions. We remark that this approach strongly relies on a separation of variables that is not available for the fractional Laplacian~\eqref{Ds:def}.
\item  For~$s=n=2$,~\cite{shapiro-tegmark} builds an explicit and elementary example: an ellipse with axes ratio equal to~$5$; the explicit sign-changing solution is a polynomial of degree~$7$. 
\item A thorough analysis for~$s=n=2$ is performed in~\cite{rg12}, finding a counterexample in terms of a polynomial of degree~$6$ in an ellipse with axes ratio equal to~$\sqrt{19}\approx 4.359$. The authors also show that it is not possible to construct a counterexample in an ellipse with polynomials of degree less than 6; moreover, it is also shown that counterexamples with degree 6 polynomials are only possible if the axes ratio is larger than~$\approx 4.352$ (this threshold also appears in our analysis, see Section~\ref{ca:sec}).
\item The first example for~$s=3$ and~$n=2$ was given in~\cite{sweers}: in this case, the ellipse has an axes ratio equal to~$12$ and the explicit sign-changing solution is a polynomial of degree~$8$.
\item Finally,~\cite{sweers-correction} suggests that, for~$s=4$ and the same ellipse as in~\cite{sweers}, 
it is possible to find an explicit nodal solution which is a polynomial of degree~$12$.
\end{enumerate}

Other domains where a general~\textit{p.p.p.} fails are some domains with corners~\cite{cd80} (in particular squares), cones~\cite{kkm89}, domains with holes~\cite{gr13}, elongated rectangles~\cite{d49}, some large cylindrical domains~\cite{gs2}, and some lima\c{c}ons and cardioids~\cite{dw05}.  For a survey on this subject for the bilaplacian in the context of the ``Boggio-Hadamard conjecture'', we refer  to~\cite[Section 1.2]{ggs} and the references therein.

All the techniques mentioned above are either incompatible or very hard to extend to the fractional setting~$s\in(0,\infty)\backslash \N$, this case requires new ideas. Nevertheless, we believe that the study of~\textit{p.p.p.} in the fractional regime is relevant, since it offers a novel perspective on the subject using the continuity of the solution mapping, see~\cite{jsw}.

For fractional powers there is only one known counterexample to~\eqref{ppp}, given in~\cite{proc} (see also~\cite[Theorem 1.11]{na}), where it is shown that, for~$s\in (k,k+1)$ with~$k$ a positive odd integer, two disjoint balls and dumbbell shaped domains do not satisfy~\textit{p.p.p.}

In the following, we show that, using our explicit computations in ellipsoids, we can construct counterexamples to~\eqref{ppp} in any dimension~$n\geq 2$ and for~$s\in(1,\sqrt3+3/2)$, where~$\sqrt3+3/2\approx 3.232$. We follow the ideas from the above mentioned paper~\cite{shapiro-tegmark}, where a counterexample in ellipses is built in terms of an explicit polynomial.  For~$n\geq 2$, let
\begin{align}\label{Ea:def}
E_a&:=
\begin{cases}
\big\{x=(x_1,\ldots,x_n)\in\R^n\::\: \sum_{i=1}^n a_i\, x_i^2<1\big\}, &\text{ if }a\in\R^n\text{ with }a_i>0,\\
\big\{x=(x_1,\ldots,x_n)\in\R^n\::\: \sum_{i=1}^{n-1} x_i^2+ax_n^2<1\big\}, &\text{ if }a>1.
\end{cases}
\end{align}
For functions in~$\cH^s_0(E_a)\cap C^{2s+\gamma}(E_a)$ with~$\gamma>0$ and~$s>1$, the fractional Laplacian can be evaluated via the hypersingular integral~\eqref{Ds:def}, but it can also be evaluated as a composition of operators (see~\cite[Corollary 1.4]{cpaa}), namely,
\begin{align*}
 \Ds u =
 \begin{cases}
(-\Delta)(-\Delta)^{s-1} u \qquad\ \text{ for }s\in(1,2),\\
(-\Delta)^2(-\Delta)^{s-2} u \qquad \text{ for }s\in(2,3).
 \end{cases}
\end{align*}
We emphasize that the order of the differential operators \textit{cannot} be interchanged freely in the context of boundary value problems. For more details, see~\cite{cpaa,survey}.

\begin{theorem}\label{ce:intro:thm} Let~$n\geq 2$ and~$s\in (1,2)$. There are~$a_0=a_0(s,n)>1$ and~$\eps_0=\eps_0(s,n)\in(0,1)$ such that, for every~$a>a_0$ and~$\eps\in(0,\eps_0)$, the function~$U_\eps:\R^n\to\R$ given by 
\begin{align*}
U_{\epsilon}(x)&:=\big((1-x_1)^2-\epsilon\big)\bigg( 1-\sum_{i=1}^{{n-1}} x_i^2 - ax_n^2\, \bigg)^s_+,
\qquad x\in\R^n,
\end{align*}
belongs to~$\cH^s_0(E_a)$, it changes sign in~$E_a$, and~$\Ds U_{\epsilon}>0$ in~$E_a$.
\end{theorem}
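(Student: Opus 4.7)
I would reduce the claim to the positivity of an explicit polynomial of degree at most~$2$ on~$\overline{E_a}$. Writing
\begin{equation*}
U_\epsilon = (1-\epsilon)\, u_s \;-\; 2\, x_1\, u_s \;+\; x_1^2\, u_s,
\end{equation*}
and invoking Theorem~\ref{torsion:thm} for the constant part together with the explicit formulas for~$\Ds(x_i u_s)$ and~$\Ds(x_i^2 u_s)$ from Lemmas~\ref{polynomial-of-degree-one} and~\ref{polynomial-of-degree-two}, the linearity of the hypersingular integral (which applies pointwisely since~$U_\epsilon \in C^{2s+\gamma}_{\mathrm{loc}}(E_a)$ for some~$\gamma>0$) yields, for every~$x\in E_a$,
\begin{equation*}
\Ds U_\epsilon(x) \;=\; (1-\epsilon)\kappa \;-\; 2\,\Ds(x_1 u_s)(x) \;+\; \Ds(x_1^2 u_s)(x) \;=:\; Q_\epsilon(x),
\end{equation*}
where~$Q_\epsilon = Q_0 - \epsilon\kappa$ and~$Q_0$ is a polynomial of degree at most~$2$ whose coefficients are explicit (although bulky) functions of~$n$,~$s$, and~$a$.

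The two routine conditions can be dispatched quickly. For the membership in~$\cH^s_0(E_a)$: by Theorem~\ref{torsion:thm},~$u_s\in\cH^s_0(E_a)$, and multiplying by the polynomial~$(1-x_1)^2-\epsilon$ preserves both the compact support inside~$\overline{E_a}$ and the~$H^s$-regularity (since~$u_s$ is compactly supported and smooth multipliers act continuously on~$H^s(\R^n)$). For the sign-change property: on the~$x_1$-axis segment~$\{(t,0,\ldots,0):-1<t<1\}\subset E_a$, the factor~$u_s$ is strictly positive while~$(1-x_1)^2-\epsilon$ takes negative values near~$x_1=1$ and positive values near~$x_1=0$ whenever~$\epsilon\in(0,1)$.

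The heart of the argument is to show~$\min_{\overline{E_a}}Q_0\geq m(s,n)>0$ for all~$a\geq a_0(s,n)$; once this is achieved, choosing~$\epsilon_0 := m(s,n)/(2\kappa)$ immediately gives~$Q_\epsilon>0$ throughout~$E_a$. I would make the coefficients of~$Q_0$ fully explicit via the boundary integrals from Theorem~\ref{thm:s+j-intro} applied to the diagonal matrix~$A=\diag(1,\dots,1,a)$, then exploit the symmetry of~$E_a$ in~$(x_2,\ldots,x_{n-1})$ to reduce the minimization to a two-variable problem in~$(x_1,x_n)$, and finally localize the minimum either by a Lagrange multiplier computation on~$\partial E_a$ or by completing the square in~$x_1$ after eliminating~$x_n^2$ via the constraint. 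The guiding heuristic is that as~$a\to\infty$ the ellipsoid becomes very thin in~$x_n$, the torsion constant~$\kappa(n,s,a)$ grows, and the coefficients coming from~$\Ds(x_1 u_s)$ and~$\Ds(x_1^2 u_s)$ remain under control; the dominant contribution to~$Q_0$ therefore approaches a positive multiple of~$\kappa$.

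\textbf{Main obstacle.} The technical heart of the proof is the quantitative analysis of the coefficients of~$Q_0$ and of their behaviour as~$a\to\infty$: each one is a hypergeometric-type integral over~$\partial E_a$ whose asymptotics must be tracked carefully enough to ensure that the subleading, possibly negative, contributions (in particular the terms linear in~$x_1$, which could pull~$Q_0$ down near the points of~$\partial E_a$ with~$x_1\approx\pm 1$) do not overwhelm the positive constant~$(1-\epsilon)\kappa$. It is precisely in this asymptotic-minimization step that the geometric threshold~$a_0(s,n)$ is produced, and the same algebra eventually reveals the sharp range of~$s$ for which such a polynomial counterexample is available in the broader result announced in the introduction.
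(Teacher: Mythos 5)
Your setup coincides with the paper's: writing~$\Ds U_\eps$ as an explicit polynomial of degree two via Lemmas~\ref{polynomial-of-degree-one} and~\ref{polynomial-of-degree-two} (equivalently, Remark~\ref{rmk:case of interest}) and reducing the whole theorem to the strict positivity of that polynomial on~$E_a$ for~$a$ large is exactly the route of Theorem~\ref{counterexample1} (with~$\gamma=\delta=0$). The membership in~$\cH^s_0(E_a)$ and the sign change are indeed routine. But the step you label ``the heart of the argument'' is precisely what is missing, and the heuristic you offer for it is not correct. To leading order as~$a\uparrow\infty$ one has~$J_1^{(1)},J_2^{(1)}=o(J_0)$, so the normalized polynomial is~$J_0(1-x_1)^2+o(J_0)$: the dominant contribution is \emph{not} a positive multiple of~$\kappa$ bounded away from zero, it degenerates (double root) at the boundary point~$(1,0,\dots,0)\in\partial E_a$. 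Consequently the sign of~$Q_0$ near~$x_1\approx 1$ is decided entirely by the subleading terms of size~$J_0/a$ and~$J_0/a^2$, i.e.\ by the sign of the discriminant~$B^2-AC$ of the~$x_1$-quadratic with~$A,B,C$ as in~\eqref{ABC}. Deciding that sign requires the sharp asymptotics~$aJ_1^{(1)}/J_0\to\frac1{2s-1}$ and~$a^2J_2^{(1)}/J_0\to\frac{3}{(2s-1)(2s-3)}$ (resp.~$+\infty$ for~$s\in(1,3/2]$) of Lemma~\ref{asymptotic-ji}; the normalized limit of~$B^2-AC$ turns out to be proportional to~$(s-2)(2s+1)$, which is exactly where the hypothesis~$s<2$ enters. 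Your plan (Lagrange multipliers, completing the square, ``coefficients remain under control'') contains no mechanism that produces this restriction, and without the asymptotic analysis of the~$J_i^{(1)}$ the positivity claim cannot be verified; indeed for~$s$ slightly above~$2$ the same polynomial fails, so any argument that does not see the threshold~$s=2$ is incomplete.

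Two further quantitative points in your sketch are off. First, the claimed uniform bound~$\min_{\overline{E_a}}Q_0\geq m(s,n)>0$ for all~$a\geq a_0$ is false: the minimum of the~$x_1$-quadratic is of order~$(AC-B^2)/A\sim J_0/a^2\sim a^{s-2}$, which tends to~$0$ as~$a\uparrow\infty$ when~$s<2$ (this is why Theorem~\ref{counterexample1} states the lower bound~$K J_0/a^2$ and chooses~$\eps$ \emph{after} fixing~$a$); so~$\eps$ must be allowed to depend on~$a$, and your choice~$\eps_0=m/(2\kappa)$ with~$m$ independent of~$a$ does not work. Second, the danger is only near~$x_1\approx+1$ (at~$x_1=-1$ the polynomial is~$\approx 4J_0$), and it is not that subleading terms might ``overwhelm~$(1-\eps)\kappa$'': the leading terms cancel there, and the subleading ones must be shown to tip the balance the right way. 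In short, the skeleton is the paper's, but the quantitative core — Lemma~\ref{asymptotic-ji} plus the discriminant computation — is absent, and as written the plan cannot be completed.
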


For larger values of~$s$ one can still construct a counterexample, but the shape of~$U_\eps$ is slightly more involved. 
\begin{theorem}\label{thm:ce:ext} Let~$n\geq 2$ and~$s\in (1,\sqrt3+3/2)$. There are constants~$a_0>1$,~$\eps_0\in(0,1)$,~$\gamma\geq 0$, and~$\delta\geq 0$, depending only on~$s$ and~$n$, such that, for every~$a>a_0$ and~$\eps\in(0,\eps_0)$, the function~$U_\eps:\R^n\to\R$ given by 
\begin{align}\label{po}
\begin{split}
U_{\epsilon}(x) &:=\big(p(x)-\eps\big)\Big( 1-\sum_{i=1}^{{n-1}} x_i^2 - ax_n^2\, \Big)^s_+, \\
\text{where }\ p(x) &:=(1-x_1)^2+\gamma(1-x_1)-\delta \Big(\sum\limits_{k=2}^{n-1}x_k^2+ax_n^2\Big),
\end{split}
\qquad\ x\in\R^n,
\end{align}
belongs to~$\cH^s_0(E_a)$,~$U_{\epsilon}$ changes sign in~$E_a$, and~$\Ds U_{\epsilon}>0$ in~$E_a$.
\end{theorem}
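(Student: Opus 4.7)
The plan is to adapt the strategy of Theorem~\ref{ce:intro:thm} to larger values of $s$ by inserting two free parameters $\gamma,\delta\geq 0$ in the polynomial factor. I would first dispose of the two easy items: the membership $U_\eps\in\cH^s_0(E_a)$ follows because $u_s(x)=(1-\sum_{i=1}^{n-1}x_i^2-ax_n^2)_+^s$ belongs to $\cH^s_0(E_a)$ by Theorem~\ref{torsion:thm} and multiplication by the smooth polynomial $p-\eps$ preserves this space. The sign change is seen by evaluating at $x=0$, where $U_\eps(0)=(1+\gamma-\eps)\,u_s(0)>0$, and on a short segment leading to $e_1\in\partial E_a$ from inside $E_a$, where $p$ becomes arbitrarily small while $u_s$ stays positive, so $U_\eps<0$ on that segment for every fixed $\eps>0$.

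The heart of the proof is the inequality $\Ds U_\eps>0$ in $E_a$. Writing $U_\eps=p\,u_s-\eps\,u_s$ and invoking linearity of $\Ds$ together with Theorem~\ref{torsion:thm}, one has
\begin{equation*}
\Ds U_\eps \;=\; \Ds(p\,u_s) \;-\; \eps\,\kappa.
\end{equation*}
Since $p$ has degree $2$, Lemmas~\ref{polynomial-of-degree-one} and~\ref{polynomial-of-degree-two} (applied componentwise in $x_1,\ldots,x_n$) give explicit formulas for $\Ds(x_i u_s)$ and $\Ds(x_i^2 u_s)$, each a polynomial of degree at most $2$. By linearity, $\Ds(p\,u_s)$ is an explicit polynomial $Q(x)=Q_{\gamma,\delta,a,s,n}(x)$ of degree $2$, whose coefficients are linear combinations of the hypergeometric factors appearing in Theorem~\ref{thm:s+j-intro}.

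The task thus reduces to choosing $\gamma\geq 0$, $\delta\geq 0$ depending only on $s,n$, together with $a_0=a_0(s,n)$, such that
\begin{equation*}
Q_{\gamma,\delta,a,s,n}(x)\geq c>0 \qquad\text{for all } x\in\overline{E_a}\text{ and all } a\geq a_0,
\end{equation*}
for some constant $c=c(s,n)>0$. Once achieved, setting $\eps_0<c/\kappa$ yields $\Ds U_\eps>0$ in $E_a$ for all $\eps\in(0,\eps_0)$. In the range $s\in(1,2)$, Theorem~\ref{ce:intro:thm} shows that $\gamma=\delta=0$ already does the job; the role of $\gamma$ and $\delta$ for larger $s$ is to cancel (or dominate) the negative contributions that the coefficients of $Q$ develop at $s\geq 2$.

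The main obstacle is this polynomial positivity step. I would exploit the product structure of $E_a$ (elongated in the $x_n$-direction as $a\to\infty$) to reduce $Q\geq c$ to a finite number of coefficient inequalities in $s,n,\gamma,\delta$ obtained by grouping monomials of the same type ($1$, $x_1$, $x_1^2$, $\sum_{k=2}^{n-1}x_k^2+ax_n^2$). The threshold $s<\sqrt{3}+3/2$ is then expected to appear as the discriminant condition under which this finite linear/quadratic system in $(\gamma,\delta)\in[0,\infty)^2$ admits a nonnegative solution, in analogy with the threshold axes ratio $\sqrt{19}$ identified in~\cite{rg12} for the biharmonic case. Tracking the hypergeometric coefficients from Theorem~\ref{thm:s+j-intro} cleanly through this linear combination, and isolating the single quadratic in $s$ that forces the threshold, is the most delicate part of the argument.
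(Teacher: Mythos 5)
Your reduction is the same as the paper's: membership in $\cH^s_0(E_a)$ and the sign change are handled as you indicate, and one writes $\Ds U_\eps=\Ds(pu_s)-\eps\,\Ds u_s$ with $\Ds(pu_s)$ an explicit degree-two polynomial coming from Lemmas~\ref{polynomial-of-degree-one}, \ref{polynomial-of-degree-two} and---which you omit but need for the $\delta$-term---Lemma~\ref{polynomial-of-degree-twob}. The genuine gap is that the core of the theorem, namely producing $\gamma,\delta$ and proving positivity of this polynomial in $E_a$ for large $a$, and identifying why this is possible exactly for $s<\sqrt3+3/2$, is only announced, not carried out: you defer it to ``a finite number of coefficient inequalities'' whose solvability you \emph{expect} to be governed by a discriminant. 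As described this would not go through: the coefficient of $x_1$ in $\Ds(pu_s)$ is $-(2+\gamma)\big(J_0+2sJ_1^{(1)}\big)<0$, so positivity cannot be read off by making grouped coefficients nonnegative; one must handle a genuine quadratic in $x_1$ on $[-1,1]$ plus a remainder in $(x_2,\dots,x_n)$. The paper does this by an asymptotic analysis as $a\uparrow\infty$: after normalizing by $J_0$, Lemma~\ref{asymptotic-ji} gives $aJ_1^{(1)}/J_0\to\frac1{2s-1}$ and $a^2J_2^{(1)}/J_0\to\frac{3}{(2s-1)(2s-3)}$ for $s>3/2$; one decomposes $\Ds(pu_s)/(\Gamma(1+s)k_{n,s})=P_{2,\delta}(x_1)+\gamma P_1(x_1)+Q_\delta(x_2,\dots,x_n)$, shows that for $s\in(1,2)$ the discriminant of $P_{2,0}$ is eventually negative (so $\gamma=\delta=0$ suffices), while for $s\ge 2$ one takes $\delta$ of order $1/a$ with $\delta<\frac{s}{(s+1)(2s+1)}\frac1a$ to keep $Q_\delta\ge0$, chooses $\gamma=-P_{2,\delta}'(P_{2,+})/P_1'$ (a tangency construction), and reduces positivity of $P_{2,\delta}+\gamma P_1$ on $[-1,1]$ to the root comparison $P_{2,+}<P_{1,+}$, which asymptotically becomes the lower bound $\delta>-\frac1{s+1}\bigl(\frac{3(s-1)}{2s-3}-\frac{4s}{2s-1}\bigr)\frac1a$. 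The threshold $\sqrt3+3/2$ is precisely the compatibility of this lower bound with the upper bound forced by $Q_\delta\ge0$; it is not the discriminant of a linear/quadratic system in $(\gamma,\delta)$, and $\gamma$ is not obtained from such a system.

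Two quantitative points in your outline also do not match what the computation yields. First, the uniform bound $Q\ge c(s,n)>0$ for all $a\ge a_0$ is stronger than what is true: the proof gives $\Ds(pu_s)\ge K\,\Gamma(1+s)k_{n,s}\,J_0/a^2$, whose size relative to $\kappa=\Gamma(1+s)k_{n,s}J_0$ is only of order $a^{-2}$, so in the construction $\eps$ (and indeed $\delta\sim c/a$ and $\gamma$) is chosen for each fixed $a\ge a_0$, exactly as in Theorem~\ref{counterexample1}. Second, appealing to the analogy with the threshold of \cite{rg12} does not produce the inequality $-\frac{3(s-1)}{2s-3}+\frac{4s}{2s-1}<\frac{s}{2s+1}$ that actually defines $\sqrt3+3/2$; without the asymptotics of $J_0$, $J_1^{(1)}$, $J_2^{(1)}$ from Lemma~\ref{asymptotic-ji} and the identities of Lemma~\ref{identities-ik} used to simplify $Q_\delta$, the hypergeometric coefficients cannot be ``tracked cleanly''---and that tracking is exactly the missing, decisive part of your argument.
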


We emphasize that Theorem~\ref{thm:ce:ext} is the first counterexample to~\eqref{ppp} in the range~$s\in(2,3)$.  In contrast to the results in~\cite{shapiro-tegmark} and~\cite{sweers} which rely on explicit computations of polynomials that can be verified quickly with a computer, the fractional case is much more complex, even with the explicit form of the fractional Laplacian~$(-\Delta)^sU_\eps$, since these formulas are given in terms of hypergeometric functions which are in general difficult to manipulate.  To overcome this difficulty, we use an asymptotic analysis as the length of \textit{one} of the axis in the ellipsoid goes to zero; it turns out that a suitable normalization of the hypergeometric functions simplifies in the limit and its asymptotic behavior can be determined with precision (see Lemma~\ref{asymptotic-ji}).  This is enough to guarantee the positivity of~$(-\Delta)^sU_\eps$ for thin enough ellipsoids. 

As to the upper bound~$\sqrt{3}+3/2$ for~$s$ in Theorem~\ref{thm:ce:ext}, it is a technical limitation of our asymptotic approach involving polynomials of the form~\eqref{po}. Surprisingly, for some (relatively) small values of~$a$ one can obtain counterexamples for slightly larger~$s$ (up to around 3.8), and we explore this fact in Section~\ref{ca:sec}, where we do a computer-assisted analysis in two dimensions. We also remark that, as expected,~$a_0\uparrow\infty$  as~$s\downarrow 1$, as can be seen in Figure~\ref{f2}.

We believe that counterexamples for any~$s>3$ can be found in suitable ellipses,  but this requires a more involved analysis with polynomials~$p$ of degree strictly higher than two, and we do not pursue this here. See the discussion in Section~\ref{ca:sec} and see~\cite{sweers-correction} for a counterexample to the~\textit{p.p.p.} for~$s=4$ in terms of a polynomial of degree 12.  

\medskip

Via a \textit{point inversion transformation}, one can use Theorem~\ref{thm:ce:ext} to show that a wide variety of shapes do not satisfy~\eqref{ppp} either.  To be more precise, in~\cite{dcds} (see also~\cite{dg}) the following result is shown.

\begin{proposition}[Proposition 1.6 in~\cite{dcds}]\label{lem:sharm} 
Let~$v\in \R^n$,~$c,s>0$,~$u\in C^{\infty}_c(\R^n\setminus\{-\nu\})$, and~$x\in \R^n\setminus\{-\nu\}$. Then
\begin{align}\label{kelvin-trafo}
\Ds\Big(\frac{u\circ \sigma (x)}{|x+\nu|^{n-2s}}\Big)=c^{2s}\,\frac{(-\Delta)^su(\sigma (x))}{|x+\nu|^{n+2s}},\qquad\text{  where }\quad \sigma (x):=c\,\frac{x+\nu}{|x+\nu|^{2}}-\nu.
\end{align}
\end{proposition}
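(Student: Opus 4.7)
The plan is to verify the identity by direct computation from the singular integral representation of the fractional Laplacian, exploiting the elementary geometric properties of the point inversion~$\sigma$.

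\textbf{Reduction and geometric identities.} A translation reduces matters to the case $\nu=0$, so that $\sigma(x)=cx/|x|^2$. A short computation yields $\sigma\circ\sigma=\mathrm{id}$, together with the standard inversion identities
$$|\sigma(a)-\sigma(b)|=\frac{c\,|a-b|}{|a|\,|b|},\qquad |\sigma(a)|=\frac{c}{|a|},\qquad |\det D\sigma(y)|=\left(\frac{c}{|y|^{2}}\right)^{n}.$$

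\textbf{Main computation for $s\in(0,1)$.} Starting from the hypersingular representation of~$\Ds$ and substituting $y=\sigma(z)$, the above identities give
$$\frac{dy}{|x-y|^{n+2s}}=\frac{c^{n}\,dz}{|x|^{n+2s}\,|\sigma(x)-z|^{n+2s}\,|z|^{n-2s}},\qquad v(\sigma(z))=\frac{u(z)\,|z|^{n-2s}}{c^{n-2s}},$$
where $v(x):=u(\sigma(x))/|x|^{n-2s}$. Abbreviating $x^{*}:=\sigma(x)$, a direct substitution produces
$$\Ds v(x)=\frac{c^{2s}}{|x|^{n+2s}}\,c_{n,s}\,\mathrm{P.V.}\int_{\R^{n}}\frac{u(x^{*})\,|x^{*}|^{n-2s}-u(z)\,|z|^{n-2s}}{|x^{*}-z|^{n+2s}\,|z|^{n-2s}}\,dz.$$

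\textbf{Handling the residual.} Adding and subtracting $u(x^{*})\,|z|^{n-2s}$ inside the numerator splits this integral as $\Ds u(x^{*})+u(x^{*})R(x^{*})$, where
$$R(x^{*}):=c_{n,s}\,\mathrm{P.V.}\int_{\R^{n}}\frac{|x^{*}|^{n-2s}-|z|^{n-2s}}{|x^{*}-z|^{n+2s}\,|z|^{n-2s}}\,dz=\Ds\!\big(|\cdot|^{2s-n}\big)(x^{*}).$$
Since $|\cdot|^{2s-n}$ is (a multiple of) the Riesz fundamental solution of~$\Ds$, it is $s$-harmonic away from the origin and thus $R(x^{*})=0$. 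Translating back to general $\nu$ yields the claimed identity for $s\in(0,1)$.

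\textbf{Extension to $s>1$.} Writing $s=k+t$ with $k\in\N$ and $t\in(0,1]$, I would iterate, combining the case above with the classical conformal covariance of $-\Delta$ under point inversions via the composition $\Ds=(-\Delta)^{k}(-\Delta)^{t}$ (valid on the Schwartz class and then extended by density/mollification to the smooth compactly supported setting of the statement). Equivalently, one may repeat the substitution inside the higher-order symmetric-difference version of the hypersingular integral.

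\textbf{Main obstacle.} The delicate step is the principal-value manipulation of the third paragraph: one must justify splitting the singular integrand into its two natural pieces and recognize the residual as the pointwise fractional Laplacian of the Riesz kernel evaluated at $x^{*}\neq 0$. For $s>1$, one also needs to check that the symmetric-difference form of the hypersingular integral transforms compatibly under the substitution, so that the extra polynomial tails appearing in the finite differences continue to cancel through the same fundamental-solution mechanism.
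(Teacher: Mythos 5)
You are not actually competing with a proof in this paper: Proposition~\ref{lem:sharm} is imported verbatim from~\cite{dcds} (Proposition~1.6 there), so your argument has to stand on its own. For $s\in(0,1)$ it essentially does: the reduction to $\nu=0$, the inversion identities, the change of variables, and the splitting of the principal value are the standard Kelvin-transform computation, and the residual term is (up to a factor $|x^{*}|^{n-2s}$ and a sign, which you dropped but which are harmless since the term vanishes) a multiple of $\Ds\big(|\cdot|^{2s-n}\big)(x^{*})$, which is indeed zero for $x^{*}\neq 0$; you would only need to add a word for $2s\geq n$ (possible when $n=1$), where $|\cdot|^{2s-n}$ is no longer the Riesz fundamental solution in the usual sense but is still pointwise $s$-harmonic off the origin, and to justify the convergence of each of the two split principal values separately.

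The genuine gap is the case $s>1$, which is exactly the range the present paper needs (Theorem~\ref{thm:ce:ext} and Corollaries~\ref{omega:cor},~\ref{omega:cor2} use $s\in(1,\sqrt3+3/2)$) and is the actual content of~\cite{dcds}. Writing $s=k+t$ and invoking ``$\Ds=(-\Delta)^{k}(-\Delta)^{t}$ plus the classical conformal covariance of $-\Delta$'' does not iterate, because the Kelvin weight is tied to the order of the operator: $|x|^{2s-n}u(\sigma(x))=|x|^{2k}\cdot|x|^{2t-n}u(\sigma(x))$, so after applying your fractional step you are left with $(-\Delta)^{k}$ acting on $|x|^{-n-2t}g(\sigma(x))$ with $g=(-\Delta)^{t}\big(|\cdot|^{-2k}u\big)$. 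The classical polyharmonic Kelvin identity applies only to the matched weight $|x|^{2k-n}$; for the mismatched weight $|x|^{-n-2t}$ each application of $-\Delta$ produces first- and zeroth-order correction terms whose cancellation is equivalent to the very statement you are trying to prove, so ``iterate'' hides the whole difficulty. Your fallback, redoing the substitution inside the higher-order difference $\delta_m u(x,y)$, also breaks down: the points $x+ky$, $k=-m,\dots,m$, form an arithmetic progression on a line, and a point inversion does not send them to an arithmetic progression (lines go to circles through $-\nu$), so there is no multi-point analogue of the two-point identity $|\sigma(a)-\sigma(b)|=c\,|a-b|/(|a||b|)$ that makes the $m=1$ computation close up. As written, your proof covers $s\in(0,1)$ (and integer $s$ via the classical polyharmonic Kelvin transform), but for fractional $s>1$ one needs a genuinely different argument, e.g.\ the inductive computation carried out in~\cite{dcds}.
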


To understand the geometrical meaning of the point inversion transformation~$\sigma$, see Figure~\ref{pinv}.  We have the following consequences of Theorems~\ref{prop:general laplacians},~\ref{thm:ce:ext}, and Proposition~\ref{lem:sharm}. Let~$n\geq 1$,~$c>0$,~$\nu\in \R^n\backslash \partial E_a$, and

\begin{align}\label{Omega}
 \Omega(a,c,\nu):= 
\left\lbrace\begin{aligned}
& \left\{x\in\R^n\::\: \sum_{i=1}^n a_i\left(c\frac{x_i+\nu_i}{|x+\nu|^{2}}-\nu_i\right)^2<1\right\}, &\text{ if }a\in\R^n\text{ with }a_i>0,\\
& \left\{x\in\R^n\::\: \sum_{i=1}^{{{n-1}}}\left(c\frac{x_i+\nu_i}{|x+\nu|^{2}}-\nu_i\right)^2+
a\left(c\frac{x_n+\nu_n}{|x+\nu|^{2}}-\nu_n\right)^2
<1\right\}, &\text{ if }a>1.
\end{aligned}\right.
\end{align}

\begin{corollary}\label{omega:cor}Let~$n\geq 1$,~$c>0$,~$a\in \R^n$ with~$a_i>0$, and~$\nu\in \R^n\backslash \partial E_a$. Then~$-\nu\not\in\overline{\Omega(a,c,\nu)}$ and, for~$s>0$, the function
\begin{align}\label{ws}
 w_s(x):=\frac{1}{|x+\nu|^{n-2s}}\left( 1-\sum_{i=1}^n
 a_i\left(c\frac{x_i+\nu_i}{|x+\nu|^{2}}-\nu_i\right)^2
 \, \right)^s_+,
 \qquad x\in\R^n,
\end{align}
is a pointwise solution of 
\begin{align}\label{rhs}
 (-\Delta)^s w_s(x) = \frac{k}{|x+\nu|^{n+2s}}\quad \text{ in }\Omega(a,c,\nu),\qquad w_s=0\quad \text{ in }\R^n\setminus\Omega(a,c,\nu)
\end{align}
for some constant~$k=k(n,s,c,a)>0$.
\end{corollary}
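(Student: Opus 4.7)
The strategy is to view $w_s$ as the image of the torsion function $u_s$ of the ellipsoid~$E_a$ under the point inversion transformation $\sigma(x):=c(x+\nu)/|x+\nu|^2-\nu$, and then to transport the equation $\Ds u_s=\kappa$ given by Theorem~\ref{torsion:thm} via the conformal-type identity of Proposition~\ref{lem:sharm}.

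First, I would unpack definitions. Setting $A=\diag(a_1,\ldots,a_n)$ and $u_s(y)=(1-Ay\cdot y)^s_+$, one checks directly from~\eqref{ws} and the formula for $\sigma$ that
\begin{equation*}
w_s(x)=\frac{u_s(\sigma(x))}{|x+\nu|^{n-2s}},\qquad x\in\R^n\setminus\{-\nu\}.
\end{equation*}
Moreover, the condition $\sigma(x)\in E_a$, i.e.\ $A\sigma(x)\cdot\sigma(x)<1$, is \emph{by construction} the same as $x\in\Omega(a,c,\nu)$, so that $\sigma^{-1}(E_a)=\Omega(a,c,\nu)$ and $w_s\equiv 0$ on $\R^n\setminus\Omega(a,c,\nu)$.

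Next, I would verify that $-\nu\notin\overline{\Omega(a,c,\nu)}$. Since $E_a$ is bounded (all $a_i>0$) and $|\sigma(x)|\to\infty$ as $x\to -\nu$, there is a neighborhood of $-\nu$ on which $\sigma(x)\notin E_a$, hence disjoint from $\Omega(a,c,\nu)$. This also ensures $w_s$ is well-defined and smooth in a neighborhood of any $x\in\Omega(a,c,\nu)$, because $u_s\in C^\infty(E_a)$ and $\sigma$ is a smooth diffeomorphism of $\R^n\setminus\{-\nu\}$ onto itself.

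With these preparations in place, the equation~\eqref{rhs} would follow by applying Proposition~\ref{lem:sharm}: for $x\in\Omega(a,c,\nu)$,
\begin{equation*}
(-\Delta)^s w_s(x)\;=\;c^{2s}\,\frac{(-\Delta)^s u_s(\sigma(x))}{|x+\nu|^{n+2s}}\;=\;\frac{c^{2s}\kappa}{|x+\nu|^{n+2s}},
\end{equation*}
where the last equality uses Theorem~\ref{torsion:thm} and $\sigma(x)\in E_a$. Setting $k:=c^{2s}\kappa$ gives~\eqref{rhs}.

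The only delicate point is that Proposition~\ref{lem:sharm} is stated for $u\in C^\infty_c(\R^n\setminus\{-\nu\})$, whereas $u_s$ has only finite Hölder regularity across $\partial E_a$. I expect this to be the main obstacle, but a routine one: since both sides of~\eqref{kelvin-trafo} are defined pointwise at each $x\in\Omega(a,c,\nu)$ via the hypersingular integral~\eqref{Ds:def} (thanks to the fact that $\sigma(x)$ lies in the open set $E_a$ where $u_s$ is smooth, and the global integrability of $u_s(\sigma(\cdot))/|{\cdot}+\nu|^{n-2s}$ against the fractional Laplacian kernel on $\R^n$), the identity extends to $u_s$ by an approximation argument: truncate $u_s$ by a smooth cutoff supported in $\R^n\setminus\{-\nu\}$, apply Proposition~\ref{lem:sharm}, and pass to the limit using dominated convergence on the hypersingular integrals on both sides. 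Alternatively, one may simply retrace the computation of~\cite{dcds} that yields~\eqref{kelvin-trafo} and observe that it goes through verbatim for $u_s$.
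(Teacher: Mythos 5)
Your roadmap coincides with the paper's: write $w_s=K_su_s$ with $K_sz(x)=|x+\nu|^{2s-n}z(\sigma(x))$, observe $\sigma^{-1}(E_a)=\Omega(a,c,\nu)$ and $-\nu\notin\overline{\Omega(a,c,\nu)}$, and transport $\Ds u_s=\kappa$ through the inversion, getting $k=c^{2s}\kappa$. The gap sits exactly at the step you call ``routine''. Proposition~\ref{lem:sharm} is stated for $u\in C^{\infty}_c(\R^n\setminus\{-\nu\})$, and your proposed repair does not make $u_s$ admissible: multiplying by a smooth cutoff supported away from $-\nu$ attacks the wrong obstruction, since the problem is not the support near $-\nu$ but the fact that $u_s$ is only of class $C^{\lfloor s\rfloor,\,s-\lfloor s\rfloor}$ across $\partial E_a$; the truncated functions are therefore still not covered by Proposition~\ref{lem:sharm}, so there is nothing to pass to the limit from. (Moreover, only $\nu\notin\partial E_a$ is assumed, so $-\nu$ may lie \emph{inside} $E_a$ --- this is precisely the situation producing the unbounded domains in Figures~\ref{fig1}--\ref{fig2} --- and then a cutoff near $-\nu$ changes $u_s$ on an open subset of $E_a$.) A genuine smoothing (mollification) does produce admissible functions, but then the passage to the limit is not a dominated-convergence formality: the natural limit of $\Ds$ of the approximants near $\partial E_a$ is governed by $\Ds u_s$ seen from outside $E_a$, which is of order $\dist(\cdot,\partial E_a)^{-s}$ and hence not even locally integrable for $s\geq 1$, and identities like $\Ds(u_s*\rho_\eps)=(\Ds u_s)*\rho_\eps$ are unavailable. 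Finally, ``retrace the computation of \cite{dcds} verbatim'' is an assertion rather than an argument. So the central identity $\Ds w_s(x)=c^{2s}\kappa\,|x+\nu|^{-n-2s}$ in $\Omega$ is not justified as written.

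For comparison, the paper never extends \eqref{kelvin-trafo} to $u_s$: it uses a duality argument. One tests against $\phi\in C^{\infty}_c(\Omega)$ and applies Proposition~\ref{lem:sharm} only to $K_s\phi$, which \emph{is} in $C^{\infty}_c(\R^n\setminus\{-\nu\})$; using $K_sK_s=c^{2s-n}\,\mathrm{id}$ and the change of variables $y=\sigma(x)$ (Jacobian $c^n|x+\nu|^{-2n}$), this gives $\int_{\Omega}w_s\,\Ds\phi=\int_{E_a}u_s\,\Ds K_s\phi$. Then two applications of the fractional integration-by-parts lemma \cite[Lemma 1.5]{cpaa} --- legitimate because $\Ds w_s$ is pointwise well defined in $\Omega$ by \cite[Lemma 3.3]{dcds}, and $\Ds u_s=\kappa$ in $E_a$ by Theorem~\ref{torsion:thm} --- yield $\int_{\Omega}\Ds w_s\,\phi=c^{2s}\kappa\int_{\Omega}|x+\nu|^{-n-2s}\phi(x)\,dx$ for every test function $\phi$, and hence the pointwise identity in $\Omega$. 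If you want to keep your direct route, you would have to actually prove an extension of Proposition~\ref{lem:sharm} to functions that are merely $C^{2s+\beta}$ near the relevant point with suitable global bounds; otherwise, switch to this test-function argument, which is what the paper does.
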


\begin{corollary}\label{omega:cor2}
 Let~$n\geq 2$,~$a,c>0$, and~$\nu\in \R^n\backslash \partial E_a$ such that~$\Omega(a,c,\nu)$ is a bounded domain. Then~$-\nu\not\in \overline{\Omega(a,c,\nu)}$ and, for every~$s\in (1,\sqrt3+3/2)$, there is~$a_0=a_0(s,n)>1$ such that~$\Omega(a,c,\nu)$ does not satisfy~\eqref{ppp} for every~$a>a_0$. For the case~$\Omega(a,c,\nu)$ unbounded, the claim still holds under the assumption~$n>4s$.
\end{corollary}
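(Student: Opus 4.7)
The plan is to transport the sign-changing counterexample $U_\epsilon$ of Theorem~\ref{thm:ce:ext} from $E_a$ to $\Omega(a,c,\nu)$ using the point inversion $\sigma$ from~\eqref{kelvin-trafo}. Fix $s\in(1,\sqrt3+3/2)$, take $a>a_0(s,n)$ and $\epsilon\in(0,\epsilon_0(s,n))$ as provided by Theorem~\ref{thm:ce:ext}, and set
\[
W_\epsilon(x):=\frac{U_\epsilon(\sigma(x))}{|x+\nu|^{n-2s}},\qquad x\in\R^n\setminus\{-\nu\},
\]
extended by $0$ at $x=-\nu$.

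First I would establish $-\nu\notin\overline{\Omega(a,c,\nu)}$: since $\sigma$ is an involution with $|\sigma(x)+\nu|=c/|x+\nu|$, points $x$ close to $-\nu$ are mapped to points far from $-\nu$, hence outside the bounded set $E_a$; consequently a full neighborhood of $-\nu$ lies in the complement of $\Omega(a,c,\nu)$, irrespective of whether $\Omega(a,c,\nu)$ is bounded or not. In particular $W_\epsilon$ is well-defined pointwise and supported in $\overline{\Omega(a,c,\nu)}$.

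Next I would verify the three properties characterizing $W_\epsilon$ as a counterexample to~\eqref{ppp} in $\Omega(a,c,\nu)$. For the positivity of $\Ds W_\epsilon$, Proposition~\ref{lem:sharm} together with the fact that $\sigma$ maps $\Omega(a,c,\nu)$ bijectively onto $E_a$ yields
\[
\Ds W_\epsilon(x)=c^{2s}\frac{\Ds U_\epsilon(\sigma(x))}{|x+\nu|^{n+2s}}>0\quad\text{in }\Omega(a,c,\nu),
\]
by Theorem~\ref{thm:ce:ext}, while the sign change of $W_\epsilon$ in $\Omega(a,c,\nu)$ follows immediately from that of $U_\epsilon$ in $E_a$ and the positivity of $|x+\nu|^{2s-n}$. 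For the membership $W_\epsilon\in\cH^s_0(\Omega(a,c,\nu))$, I would treat the two regimes separately: in the bounded case, $\overline{\Omega(a,c,\nu)}$ is compact and bounded away from $-\nu$, so $\sigma$ is a smooth diffeomorphism on a neighborhood of $\overline{\Omega(a,c,\nu)}$ and $W_\epsilon$ inherits the $H^s$-regularity of $U_\epsilon$ through the change of variables; in the unbounded case $-\nu\in \inn(E_a)$, and as $|x|\to\infty$ one has $\sigma(x)\to-\nu$ with $U_\epsilon(\sigma(x))$ remaining bounded, so the weight $|x+\nu|^{-(n-2s)}$ produces decay $|W_\epsilon(x)|^2\sim |x|^{-2(n-2s)}$, whence $L^2$-integrability is equivalent to $2(n-2s)>n$, i.e.\ precisely the hypothesis $n>4s$. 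Analogous tail estimates, combined with the smoothness of $\sigma$ away from $-\nu$, control the Gagliardo seminorm.

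The main technical obstacle is applying Proposition~\ref{lem:sharm} to $U_\epsilon$, which is not in $C^\infty_c(\R^n\setminus\{-\nu\})$ but only in $\cH^s_0(E_a)$ with interior $C^{2s+\gamma}$-regularity given by the explicit form~\eqref{po}. I would bridge this gap by approximation: mollify $U_\epsilon$ by functions compactly supported in $\inn E_a$, apply~\eqref{kelvin-trafo} to each approximant, and pass to the limit in the hypersingular pointwise formula defining $\Ds$. The local regularity of $U_\epsilon$ inside $E_a$ controls the principal part of the integrand, while the compact support of $U_\epsilon$ in the bounded case (respectively the decay estimates described above in the unbounded case) controls the tails; this yields the identity~\eqref{kelvin-trafo} pointwise at every $x\in\Omega(a,c,\nu)$ and completes the proof.
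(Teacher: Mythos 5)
Your skeleton coincides with the paper's: both proofs transplant the counterexample $U_\eps$ of Theorem~\ref{thm:ce:ext} to $\Omega(a,c,\nu)$ via $W(x)=|x+\nu|^{2s-n}U_\eps(\sigma(x))$, note $-\nu\notin\overline{\Omega(a,c,\nu)}$, and then check sign change, membership in $\cH^s_0$, and pointwise positivity of $\Ds W$. The differences are in how the two technical steps are justified, and your route is workable but has two soft spots that must be written out. First, for the pointwise identity~\eqref{kelvin-trafo} applied to the non-smooth $U_\eps$, the paper does not mollify: as in the proof of Corollary~\ref{omega:cor}, it tests against $\phi\in C^\infty_c(\Omega)$, uses the involution $K_s(K_sz)=c^{2s-n}z$ and Proposition~\ref{lem:sharm} applied to the smooth, compactly supported functions $K_s\phi$, together with~\cite[Lemma 3.3]{dcds} for the pointwise evaluation. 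Your approximation argument can be made to work, but note that Proposition~\ref{lem:sharm} requires support away from $-\nu$; in the unbounded case $-\nu$ lies \emph{inside} $E_a$, so approximants merely ``compactly supported in $\inn E_a$'' are not admissible --- you must also excise a shrinking neighborhood of $-\nu$ and verify (true, since the excised piece is bounded and is pushed to infinity by $\sigma$, so it contributes negligibly to both hypersingular integrals at a fixed point) that the limit is unaffected. Second, for $W\in\cH^s_0$, your bounded-case argument via diffeomorphism invariance of $H^s$ and the smooth weight is correct and more elementary than the paper's, and you identify the correct $L^2$ condition $n>4s$ in the unbounded case; however, the control of the full order-$s$ seminorm at infinity is only asserted (``analogous tail estimates''), and this is exactly the step the paper dispatches in one line via Plancherel, $\int_{\R^n}|\xi|^{2s}|\widehat W|^2=\int_\Omega W\,\Ds W=c^{n+2s}\int_{E_a}U_\eps\,P<\infty$, with $P$ the explicit degree-two polynomial from Lemmas~\ref{polynomial-of-degree-one} and~\ref{polynomial-of-degree-two}. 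In short: same strategy, alternative (and in the bounded case cleaner) justifications, provided you supply the $-\nu$-cutoff in the approximation and the tail estimates for the seminorm.
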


To see some of the different (bounded and unbounded) domains represented by~$\Omega(a,c,\nu)$ for~$n=2$ and~$n=3$, see Figures~\ref{fig1} and~\ref{fig2} in Section~\ref{shapes}.

\medskip

The paper is organized as follows.  In Section~\ref{sec:notation} we introduce some of the most relevant notation and important definitions. In Section~\ref{sec2} we show Theorems~\ref{torsion:thm} and~\ref{thm:s+j-intro} and deduce the explicit formulas regarding functions of the type~\eqref{us} in ellipsoids.  Section~\ref{sec:ce} is devoted to the construction of counterexamples, and contains the proofs of Theorems~\ref{ce:intro:thm} and~\ref{thm:ce:ext}, as well as those of Corollaries~\ref{omega:cor} and~\ref{omega:cor2}. 

\section{Notation and definitions}\label{sec:notation}

\subsection{The higher-order fractional Laplacian}
Any positive power~$s>0$ of the (minus) Laplacian, \textit{i.e.}~$\Ds$, has the same Fourier symbol (see~\cite[Chapter 5]{SKM93} or~\cite[Theorem 1.8]{cpaa}) as the following hypersingular integral,
\begin{align}\label{Ds:def}
L_{m,s} u(x):=\frac{c_{n,m,s}}{2}\int_{\R^n} \frac{\delta_m u(x,y)}{|y|^{n+2s}} \ dy,\qquad x\in \R^n,
\end{align}
where~$n\in\N$ is the dimension,~$m\in\N$,~$s\in(0,m)$,
\begin{align*}
\delta_m u(x,y):= \sum_{k=-m}^m (-1)^k { \binom{2m}{m-k}} u(x+ky) \qquad \text{ for }x,y\in \R^n
\end{align*}
is a finite difference of order~$2m$, and~$c_{n,m,s}$ is the positive constant given by
\begin{align}\label{cNms:def}
c_{n,m,s}
:=\left\{\begin{aligned}
&\frac{4^{s}\Gamma(\frac{n}{2}+s)}{ \pi^{n/2}\Gamma(-s) }\Big(\sum_{k=1}^{m}(-1)^{k}{ \binom{2m}{m-k}} k^{2s}\Big)^{-1},&& s\in(0,m)\backslash \N,\\
&\frac{4^{s}\Gamma(\frac{n}{2}+s)s!}{2\pi^{n/2}} \Big(\sum_{k=2}^m (-1)^{k-s+1}{\binom{2m}{m-k}} k^{2s} \ln(k)\Big)^{-1},&& s\in\{1,\ldots,m-1\}.
\end{aligned}\right.
\end{align}
In particular, if~$\lfloor s \rfloor$ denotes the floor of~$s$, then
\begin{align*}
(-\Delta)^s u(x) = (-\Delta)^{\lfloor s \rfloor}(-\Delta)^{s-\lfloor s \rfloor} u(x) = L_{m,s} u(x) 
\end{align*}
for~$x\in\Omega$ and for any~$u\in C^{2s+\beta}(\Omega)\cap \cH^s_0(\Omega)$, with~$\beta>0$, see~\cite[Corollary 1.4]{cpaa}.

\subsection{Ellipsoids}\label{not:ellipse}

Let~$n\geq 1$,~$a\in \R^n$,~$a_i>0$, and~$A=\text{diag}{(a_k)}_{k=1}^n$ a diagonal matrix.  Then, for~$x,y\in \R^n$,
\begin{align*}
 \langle x \,,\, y\rangle_a:=Ax\cdot y\qquad\text{ and }\qquad |x|_a:=\sqrt{\langle x\,,\, x\rangle_a}
\end{align*}
define an equivalent scalar product and norm in~$\R^n$ (note that the converse is also true for any symmetric positive definite matrix~$A$, after a suitable rotation of the axes). Let~$E_a\subset\R^n$ denote the open unitary ball with respect to the~$a$-norm, \textit{i.e.},
\begin{align*}
E_a:=\{x\in\R^n:|x|_a<1\}.
\end{align*}
In Section~\ref{sec:ce} we use~$a$ to denote a positive real number, in this case we use the convention given in~\eqref{Ea:def}. 

For~$\beta>-1$, let the function~$u_\beta:\R^n\to\R$ be given by
\begin{align*}
 u_\beta(x):=\big(1-|x|^2_a\big)^\beta_+, 
\qquad x\in\R^n.
\end{align*}
We also let
\begin{equation}\label{constant-measure}
\mu(d\theta)=\frac{d\theta}{{|\theta|}^{n+2s}|A\theta|},
\end{equation}
where~$d\theta$ denotes the surface measure of~$\partial E_a$, and
\begin{align}
J_0 &:= \int_{\partial E_a}\mu(d\theta), \label{J0} \\
J_i^{(k)} & := a_k^i\int_{\partial E_a}\theta_k^{2i} \;\mu(d\theta),
\qquad k\in\{1,\ldots,n\},\ i\in\N. \label{Ji}
\end{align}
These integrals appear frequently in our explicit evaluations.  In the particular case~$a_1=\ldots=a_{n-1}=1$, the integrals~$J_0$ and~$J_i^{(k)}$ can be computed explicitly as well as their asymptotic profile as~$a_n\uparrow\infty$, see Lemma~\ref{asymptotic-ji}.

\subsection{Special functions}\label{not:hf} 

We use the gamma, beta, and hypergeometric functions in our analysis, see~\cite[Chapter 6 and Chapter 15]{abramowitz} for general properties of these functions.  We collect here the definitions and some integral representations.
\begin{enumerate}
\item \textit{(Gamma function)} For~$z>0$ we denote by 
\begin{align*}
\Gamma(z)=\int_0^{\infty} t^{z-1}e^{-t}\;dt
\end{align*} 
the gamma function. If~$z\in(-\infty,0)\setminus \Z$, we let~$\Gamma(z)$ be given by the iterative definition~$\Gamma(z+1)=z\Gamma(z)$.
\item \textit{(Beta function)} For~$a,b>0$ we denote by 
\begin{align*}
B(a,b)=\frac{\Gamma(a)\Gamma(b)}{\Gamma(a+b)}
\end{align*} 
the beta function. Note that in this case
\begin{align*}
B(a,b)=\int_0^{1}(1-t)^{a-1}t^{b-1}\;dt=\int_0^{\infty}\frac{t^{a-1}}{(1+t)^{a+b}}\;dt.
\end{align*}
\item \textit{(Hypergeometric function)} For~$a,b,c,z\in\R$ with~$|z|<1$,~$\hf(a,b;c;z)$ denotes the hypergeometric function 
\begin{align}\label{s}
\hf(a,b;c;z):=\sum_{k=0}^{\infty} \frac{(a)_k(b)_k}{(c)_k}\frac{z^k}{k!},
\end{align}
where~$(q)_k$ is the Pochhammer symbol given by~$(q)_0=1$ and~$(q)_k=\prod_{i=0}^{k-1}(q+i)$. Note that if~$q\notin\Z\cap(-\infty,0]$, then~$(q)_k=\frac{\Gamma(q+k)}{\Gamma(q)}$ for~$k\in \N_0$ and hence in particular, if~$a,b,c\notin \Z\cap (-\infty,0]$, then
\begin{align*}
\hf(a,b;c;z)= \frac{\Gamma(c)}{\Gamma(a)\Gamma(b)}\sum_{k=0}^{\infty} \frac{\Gamma(a+k)\Gamma(b+k)}{\Gamma(c+k)}\frac{z^k}{k!}.
\end{align*}
If instead~$q\in\Z\cap(-\infty,0]$, then
\begin{align}\label{qk}
(q)_k=0\qquad \text{ for~$k+q\geq 1$}.
\end{align}
Moreover, if~$c>b>0$, then by using the meromorphic extension of the hypergeometric function we have for~$z<1$
\begin{align}\label{i}
\hf(a,b;c;z)=\frac{1}{B(b,c-b)}\int_0^1 t^{b-1}(1-t)^{c-b-1}(1-zt)^{-a}\;dt.
\end{align}
\end{enumerate}

\section{Explicit evaluations}\label{sec2}

\begin{lemma}\label{lem:recurrence}
Let~$s>0$ and~$\beta>0$. Then, for~$i,j,k\in\{1,\ldots,n\}$ and~$x\in E_a$,
\begin{align}
\Ds\big(x_i\, u_\beta(x)\big) & = -\frac1{2(\beta+1)a_i}\;\partial_i\Ds  u_{\beta+1}(x), \label{lapla 1u-beta} \\
\Ds\big( x_ix_j u_\beta(x) \big) & = \frac1{2(\beta+1) a_i}\bigg(\delta_{i,j}\Ds  u_{\beta+1}(x)+\frac1{2(\beta+2)a_j}\;\partial_{ij}\Ds  u_{\beta+2}(x)\bigg), \label{lapla 2u-beta} 
\end{align}
where~$\delta_{i,j}$ is the Kronecker delta. In particular,
\begin{align}
\Ds\big(x_1\, u_\beta(x)\big) & = -\frac1{2(\beta+1)a_1}\;\partial_1\Ds  u_{\beta+1}(x) \label{lapla 1u-beta bis},\\
\Ds\big( x_1^2 u_\beta(x) \big) & = \frac1{2(\beta+1) a_1}\bigg(\Ds  u_{\beta+1}(x)+\frac1{2(\beta+2)a_1}\;\partial_1^2\Ds  u_{\beta+2}(x)\bigg). \label{lapla 2u-beta bis} 
\end{align}
\end{lemma}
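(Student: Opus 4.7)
The plan is to reduce both identities in the lemma to a single algebraic observation combined with the fact that $\Ds$ commutes with spatial derivatives. Since $\beta>0$, the function $u_{\beta+1}$ lies in $C^1(\R^n)$ (and $u_{\beta+2}$ in $C^2(\R^n)$), so the chain rule applied to $(1-|x|_a^2)_+^{\beta+1}$ is valid on all of $\R^n$ and gives
\[
\partial_i u_{\beta+1}(x) = -2(\beta+1)\,a_i\,x_i\,u_\beta(x),\qquad x\in\R^n.
\]
Rearranging expresses $x_i u_\beta$ as $-\frac{1}{2(\beta+1)a_i}\partial_i u_{\beta+1}$. Applying $\Ds$ to both sides and interchanging it with $\partial_i$ yields~\eqref{lapla 1u-beta} at once, and the particular case~\eqref{lapla 1u-beta bis} follows by taking $i=1$.

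To establish~\eqref{lapla 2u-beta} I would iterate the same idea. Applying the chain-rule identity above with $\beta$ replaced by $\beta+1$ first writes $x_j u_{\beta+1}$ as a constant multiple of $\partial_j u_{\beta+2}$. Differentiating this once more in $x_i$ via the product rule, and substituting the original identity for $\partial_i u_{\beta+1}$, produces
\[
\partial_{ij} u_{\beta+2}(x) = -2(\beta+2)\,a_j\,\delta_{i,j}\,u_{\beta+1}(x) + 4(\beta+1)(\beta+2)\,a_i a_j\,x_i x_j\,u_\beta(x).
\]
Solving for $x_i x_j u_\beta$ expresses it as a linear combination of $u_{\beta+1}$ and $\partial_{ij}u_{\beta+2}$ with precisely the coefficients appearing in~\eqref{lapla 2u-beta}. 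Applying $\Ds$ and pushing it through $\partial_{ij}$ concludes the proof; the case $i=j=1$ gives~\eqref{lapla 2u-beta bis}.

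The only step that requires any real justification is the pointwise commutation $\Ds\partial_i v(x)=\partial_i\Ds v(x)$ applied to $v=u_{\beta+1}$ and $v=u_{\beta+2}$ at points $x\in E_a$. On the Fourier side this is transparent, since the symbols $|\xi|^{2s}$ and $i\xi_i$ commute, but I would prefer a direct pointwise argument based on the hypersingular representation~\eqref{Ds:def}: write $\partial_i\Ds v(x)$ as the $e_i$-difference quotient of $\Ds v$ and move the limit inside the integral by dominated convergence. The dominating function is controlled near $y=0$ by the smoothness of $v$ in a neighborhood of the interior point $x\in E_a$, and for large $|y|$ by the compact support of $v$. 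This is the only delicate step; everything else is calculus and bookkeeping.
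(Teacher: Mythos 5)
Your proposal is correct and follows essentially the same route as the paper: the pointwise derivative identities for $u_{\beta+1}$ and $u_{\beta+2}$, solving for $x_i u_\beta$ and $x_i x_j u_\beta$, and then commuting $\Ds$ with the partial derivatives via dominated convergence (the paper cites its Proposition~B.2 in~\cite{na} for exactly this interchange). Solving for $x_ix_ju_\beta$ algebraically in one step rather than iterating the first-order identity, as the paper does, is only a cosmetic reorganization.
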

\begin{proof}
Let us first notice that, for any~$\beta>0$ and~$x\in E_a$,
\begin{align}
\partial_i  u_{\beta+1}(x) &= -2(\beta+1)\big(1-|x|_a^2\big)_+^{\beta} (Ax)_i=-2(\beta+1) a_i \, x_i u_\beta(x), \label{deriv 1u-beta} \\
\partial_i \big(x_j  u_{\beta+1}(x)\big) &= \delta_{i,j} u_{\beta+1}(x) -2(\beta+1) a_i \, x_ix_j u_\beta(x).\label{deriv 2u-beta} 
\end{align}
Identity~\eqref{deriv 1u-beta} directly gives~\eqref{lapla 1u-beta}.
Iterating the same idea, from~\eqref{deriv 2u-beta} one deduces
\begin{multline*}
\Ds \big( x_ix_j u_\beta(x) \big)
 = \frac1{2(\beta+1) a_i}\bigg(\delta_{i,j}\Ds  u_{\beta+1}(x)-\partial_i \Ds\big(x_j  u_{\beta+1}(x)\big)\bigg) = \\
 =\ \frac1{2(\beta+1) a_i}\bigg(\delta_{i,j}\Ds  u_{\beta+1}(x)+\frac1{2(\beta+2)a_j}\;\partial_{ij}\Ds  u_{\beta+2}(x)\bigg).
\end{multline*}
Note that the interchange between derivative~$\partial_i$ and fractional Laplacian~$(-\Delta)^s$ is allowed in this case by the Lebesgue dominated convergence theorem, see for example~\cite[Proposition B.2]{na}.
\end{proof}

\begin{theorem}\label{prop:general laplacians}
Let~$s>0$ and~$\beta>-1$.  Then
\begin{align}\label{eq:general laplacians}
\begin{split}
\Ds  u_\beta(x)= & \ \frac{2^{2s-1}\Gamma\big(\frac12+s\big)\Gamma(1+\beta)\,c_{n,m,s}}{\Gamma(1+\beta-s)\Gamma\big(\frac12\big)\,c_{1,m,s}}\ \times \\
& \times\ \int_{\partial E_a}\big(  u_1(x)+\langle x,\theta\rangle_a^2\big)^{\beta-s} \;
\hf\Big(s+\frac12,-\beta+s;\frac12;\frac{\langle x,\theta\rangle_a^2}{ u_1(x)+\langle x,\theta\rangle_a^2}\Big) \; \mu(d\theta)
\end{split}
\qquad \text{for }x\in E_a,
\end{align}
where~$c_{n,m,s}$ is given in~\eqref{cNms:def}. Here,~$\Gamma(t)^{-1}=0$ if~$t\in \Z\setminus \N$.
\end{theorem}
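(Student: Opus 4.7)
The plan is to evaluate the hypersingular representation~\eqref{Ds:def} by slicing $\R^n$ along lines through $x$ parametrized by $\theta\in\partial E_a$, thereby reducing the calculation to one-dimensional fractional Laplacians of the profile $(1-\cdot^2)_+^\beta$. This is the higher-dimensional analogue of the technique of~\cite{dyda,meijer}, adapted to the ellipsoidal geometry via the norm $|\cdot|_a$.

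For $x\in E_a$ fixed, I parametrize $y\in\R^n\setminus\{0\}$ as $y=\lambda\theta$ with $\theta\in\partial E_a$ and $\lambda\in(0,\infty)$. Since the outer unit normal to $\partial E_a$ at $\theta$ is $A\theta/|A\theta|$, the polar-type change of variables gives $dy=\lambda^{n-1}|A\theta|^{-1}\,d\lambda\,d\theta$. Using that $\delta_m u_\beta(x,\cdot)$ is even in its second argument and that $\partial E_a$ is antipodally symmetric, equation~\eqref{Ds:def} rewrites as
\begin{equation*}
\Ds u_\beta(x)=\frac{c_{n,m,s}}{2\,c_{1,m,s}}\int_{\partial E_a}\bigl(L_{m,s}^{(1)}v_{x,\theta}\bigr)(0)\,\mu(d\theta),
\end{equation*}
where $v_{x,\theta}(t):=u_\beta(x+t\theta)$, $L_{m,s}^{(1)}$ denotes the operator in~\eqref{Ds:def} with $n=1$, and $\mu$ is as in~\eqref{constant-measure}.

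Since $|\theta|_a=1$ on $\partial E_a$, expanding $|x+t\theta|_a^2$ and completing the square yields
\begin{equation*}
1-|x+t\theta|_a^2=\rho^2-(t+\langle x,\theta\rangle_a)^2,\qquad \rho^2:=u_1(x)+\langle x,\theta\rangle_a^2>0,
\end{equation*}
so $v_{x,\theta}(t)=\rho^{2\beta}\bigl(1-\tau(t)^2\bigr)_+^\beta$ with $\tau(t):=(t+\langle x,\theta\rangle_a)/\rho$. Translation and scaling of the one-dimensional fractional Laplacian then give
\begin{equation*}
\bigl(L_{m,s}^{(1)}v_{x,\theta}\bigr)(0)=\rho^{2(\beta-s)}\,\Ds\bigl((1-\cdot^2)_+^\beta\bigr)\!\bigl(\tau(0)\bigr),
\end{equation*}
and $\tau(0)^2=\langle x,\theta\rangle_a^2/\rho^2$ matches precisely the argument of $\hf$ appearing in~\eqref{eq:general laplacians}.

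To conclude I invoke the classical one-variable Getoor--type identity (essentially~\cite[Corollary 4]{meijer} specialized to $n=1$; see also~\cite{dyda})
\begin{equation*}
\Ds\bigl((1-\cdot^2)_+^\beta\bigr)(\tau)=\frac{2^{2s}\,\Gamma(\tfrac12+s)\,\Gamma(1+\beta)}{\Gamma(\tfrac12)\,\Gamma(1+\beta-s)}\;\hf\!\Bigl(s+\tfrac12,\,s-\beta;\,\tfrac12;\,\tau^2\Bigr),\qquad\tau\in(-1,1),
\end{equation*}
valid for $\beta>-1$ under the convention $\Gamma(t)^{-1}=0$ for $t\in\Z\setminus\N$. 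Substituting and collecting constants produces~\eqref{eq:general laplacians}. The main technical obstacle is the Fubini-type interchange and the legitimacy of the polar decomposition: although $\delta_m u_\beta(x,y)/|y|^{n+2s}$ is absolutely integrable on $\R^n$ thanks to the $2m>2s$ order of vanishing of $\delta_m$ at $y=0$, the change of variables and the identification of the inner integral with the hypersingular expression for $L_{m,s}^{(1)}v_{x,\theta}(0)$ should be carried out on a symmetric annulus $\{\varepsilon\le|y|\le R\}$ and then passed to the limit; positivity of $u_1(x)$ for $x\in E_a$ keeps $\rho$ uniformly bounded away from $0$ in $\theta$, enabling the dominated convergence argument.
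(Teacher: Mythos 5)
Your proposal is correct and follows essentially the same route as the paper: a polar-type decomposition adapted to the $a$-norm with Jacobian factor $|A\theta|^{-1}$, reduction of the inner one-dimensional integral to $\Ds\big((1-z^2)_+^\beta\big)$ by completing the square in $1-|x+t\theta|_a^2$ (the paper performs the equivalent affine change of variables $t=-\langle x,\theta\rangle_a+\tau\sqrt{u_1(x)+\langle x,\theta\rangle_a^2}$ directly inside the finite-difference sum, which is your translation--scaling step), and then the one-dimensional identity from~\cite[Corollary 4]{meijer} together with the constant bookkeeping $\tfrac{c_{n,m,s}}{2c_{1,m,s}}$. The only cosmetic difference is that the paper justifies the Jacobian via the coarea formula for $|\cdot|_a$ rather than via the outer normal, and both arguments yield the same measure $\mu$.
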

\begin{proof}
We consider spherical coordinates with respect to the~$a$-norm by writing
any~$z\in\R^n$ as~$z=t\theta$ with~$t>0$ and~$\theta\in\partial E_a$. This transformation has the Jacobian~$t^{n-1}/|A\theta|$, since, by the coarea formula (notice that~$\nabla|x|_a=Ax/|x|_a$),
\begin{align*}
\int_{\R^n}f(x)\;dx=\int_{\R^n}\frac{f(x)}{\big|\nabla|x|_a\big|}\big|\nabla|x|_a\big|\;dx
=\int_0^\infty\int_{t\partial E_a}\frac{f(x)\,|x|_a}{|Ax|}\;dx\;dt
=\int_0^\infty\int_{\partial E_a}\frac{f(t\theta)}{|A\theta|} \; d\theta\; t^{n-1} \;dt.
\end{align*}

We recall notation~\eqref{constant-measure} and write
\begin{align*}
\Ds  u_\beta(x) = \frac{c_{n,m,s}}2\int_{\R^n}\frac{\delta_m  u_\beta(x,y)}{{|y|}^{n+2s}}\;dy=
\frac{c_{n,m,s}}2\int_{\partial E_a}\int_0^\infty\frac{\delta_m  u_\beta(x,t\theta)}{t^{1+2s}}\;dt\;\mu(d\theta) 
= \frac{c_{n,m,s}}4\int_{\partial E_a}\int_\R\frac{\delta_m  u_\beta(x,t\theta)}{{|t|}^{1+2s}}\;dt\;\mu(d\theta).
\end{align*}
We now focus on the inner integral: recall that
\begin{align*}		
\delta_m  u_\beta(x,t\theta)=\sum_{k=-m}^m(-1)^k\binom{2m}{m-k} u_\beta(x+kt\theta)
=\sum_{k=-m}^m(-1)^k\binom{2m}{m-k}\big(1-|x+kt\theta|_a^2\big)_+^\beta.
\end{align*}
Apply the change of variables
\begin{align*}
t &= -\langle x,\theta\rangle_a + \tau \sqrt{1-|x|_a^2+\langle x,\theta\rangle_a^2},
\end{align*}
rearrange
\begin{align*}
1- & |x+kt\theta|_a^2=1-\Big|x-k\langle x,\theta\rangle_a\theta+k\tau\theta \sqrt{1-|x|_a^2+\langle x,\theta\rangle_a^2}\Big|_a^2= \\
=\ & 1-|x|_a^2-k^2\langle x,\theta\rangle_a^2-k^2\tau^2\big( 1-|x|_a^2+\langle x,\theta\rangle_a^2\big)+2k\langle x,\theta\rangle_a^2
-2k(1-k)\langle x,\theta\rangle_a^2\tau\sqrt{1-|x|_a^2+\langle x,\theta\rangle_a^2} \\
=\ & \bigg(
\frac{1-|x|_a^2}{1-|x|_a^2+\langle x,\theta\rangle_a^2}+(2k-k^2)\frac{\langle x,\theta\rangle_a^2}{1-|x|_a^2+\langle x,\theta\rangle_a^2}-k^2\tau^2
-2k(1-k)\tau\frac{\langle x,\theta\rangle_a}{\sqrt{1-|x|_a^2+\langle x,\theta\rangle_a^2}}
\bigg)\times \\
 & \times \big( 1-|x|_a^2+\langle x,\theta\rangle_a^2\big) \\
=\ & \bigg(
1-(1-k)^2\frac{\langle x,\theta\rangle_a^2}{1-|x|_a^2+\langle x,\theta\rangle_a^2}-k^2\tau^2
-2k(1-k)\tau\frac{\langle x,\theta\rangle_a}{\sqrt{1-|x|_a^2+\langle x,\theta\rangle_a^2}}
\bigg) \big( 1-|x|_a^2+\langle x,\theta\rangle_a^2\big) \\
=\ & \bigg(1-\Big((1-k)\frac{\langle x,\theta\rangle_a}{\sqrt{1-|x|_a^2+\langle x,\theta\rangle_a^2}}+k\tau\Big)^2\bigg)\big( 1-|x|_a^2+\langle x,\theta\rangle_a^2\big),
\end{align*}
and deduce
\begin{multline*}
\int_\R\frac{\delta_m u_\beta(x,t\theta)}{{|t|}^{1+2s}}\;dt=\big( 1-|x|_a^2+\langle x,\theta\rangle_a^2\big)^{\beta-s} \ \times \\
\times\ \int_\R\frac{\displaystyle\sum_{k=-m}^m(-1)^k\binom{2m}{m-k}\bigg(1-\Big((1-k)\frac{\langle x,\theta\rangle_a}{\sqrt{1-|x|_a^2+\langle x,\theta\rangle_a^2}}+k\tau\Big)^2\bigg)_+^\beta}{\displaystyle\Big|\tau-\frac{\langle x,\theta\rangle_a}{\sqrt{1-|x|_a^2+\langle x,\theta\rangle_a^2}}\Big|^{1+2s}}\;d\tau
\end{multline*}
which amounts to (after a translation in the~$\tau$ variable)
\begin{align}
\int_\R\frac{\delta_m u_\beta(x,t\theta)}{{|t|}^{1+2s}}\;dt = \
\big( 1-|x|_a^2+\langle x,\theta\rangle_a^2\big)^{\beta-s}\int_\R\frac{\displaystyle\sum_{k=-m}^m(-1)^k\binom{2m}{m-k}\big(1-(\widetilde x_\theta+k\tau)^2\big)_+^\beta}{{|\tau|}^{1+2s}} \; d\tau,
\label{ec1}
\end{align}
where~$\tilde x_\theta := \langle x,\theta\rangle_a \, (1-|x|_a^2+\langle x,\theta\rangle_a^2)^{-1/2}.$  Now, using a particular case of\footnote{In
the notations of~\cite[Corollary 4]{meijer}, we fix~$V(x)\equiv 1,\ l=0,\ \delta=n=1,\ \sigma=\beta$ and~$\rho=s$.}~\cite[Corollary 4]{meijer}, we know that
\begin{align}
\Ds \big(1-z^2\big)_+^{\beta}& = \frac{c_{1,m,s}}2\int_\R\frac{\displaystyle\sum_{k=-m}^m(-1)^k\binom{2m}{m-k}\big(1-(z+k\tau)^2\big)_+^\beta}{{|\tau|}^{1+2s}} \; d\tau\notag
\\
&=\frac{2^{2s}\Gamma\big(\frac12+s\big)\Gamma(1+\beta)}{\Gamma(1+\beta-s)\Gamma\big(\frac12\big)}\; \hf\Big(s+\frac12,-\beta+s;\frac12;z^2\Big)
\qquad\text{for }z\in(-1,1).\label{ec2}
\end{align}

Therefore, by~\eqref{ec1} and~\eqref{ec2},
\begin{align*}
\Ds  u_\beta(x) &= \frac{c_{n,m,s}}{4}\int_{\partial E_a}\big( 1-|x|_a^2+\langle x,\theta\rangle_a^2\big)^{\beta-s}
\int_\R\frac{\displaystyle\sum_{k=-m}^m(-1)^k\binom{2m}{m-k}\big(1-(\widetilde x_\theta+k\tau)^2\big)_+^\beta}{{|\tau|}^{1+2s}} \; d\tau \; \mu(d\theta) \\
&= \frac{c_{n,m,s}}{2c_{1,m,s}}\int_{\partial E_a}\big( 1-|x|_a^2+\langle x,\theta\rangle_a^2\big)^{\beta-s} \;
\Ds (1-\tilde x_\theta^2)^\beta_+ \; \mu(d\theta),\\&=
\frac{2^{2s-1}\Gamma\big(\frac12+s\big)\Gamma(1+\beta)\,c_{n,m,s}}{\Gamma(1+\beta-s)\Gamma\big(\frac12\big)\,c_{1,m,s}}
\int_{\partial E_a}\big( 1-|x|_a^2+\langle x,\theta\rangle_a^2\big)^{\beta-s} \;
\hf\Big(s+\frac12,-\beta+s;\frac12;\tilde x_\theta^2\Big) \; \mu(d\theta).
\end{align*}
\end{proof}

In the next corollaries we collect some consequences of Theorem~\ref{prop:general laplacians}. For this let
\begin{align}\label{kns}
k_{n,s}:=\frac{2^{2s-1}\Gamma(n/2+s) }{ \pi^{n/2} }.
\end{align}

\begin{corollary}\label{cor:torsion}
Let~$s>0$.
Then it holds
\begin{align}\label{torsion}
\Ds u_{s}(x)=\Gamma(1+s)k_{n,s}\;J_0
\qquad\text{for }x\in E_a.
\end{align}
Moreover, for any~$\ell\in\N$ such that~$s-\ell>-1$, it also holds
\begin{align}
\Ds u_{s-\ell}(x)=0
\qquad\text{for }x\in E_a,
\end{align}
with~$J_0$ as in~\eqref{J0}.
\end{corollary}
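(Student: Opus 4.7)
The plan is to specialize Theorem~\ref{prop:general laplacians} twice, choosing $\beta$ so that either the hypergeometric factor collapses trivially or the Gamma-prefactor vanishes. For the identity~\eqref{torsion} I would set $\beta=s$. Then the exponent $\beta-s=0$ makes $(u_1(x)+\langle x,\theta\rangle_a^2)^{\beta-s}\equiv 1$, while the second parameter of the hypergeometric function becomes $-\beta+s=0$; by the definition of the Pochhammer symbol, $(0)_k=0$ for $k\geq 1$ and $(0)_0=1$, so the series~\eqref{s} reduces to its constant term and $\hf(s+\tfrac12,0;\tfrac12;\,\cdot\,)\equiv 1$. Consequently, the integral in~\eqref{eq:general laplacians} collapses to $\int_{\partial E_a}\mu(d\theta)=J_0$.

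It then remains to rewrite the prefactor of Theorem~\ref{prop:general laplacians} at $\beta=s$ in the form $\Gamma(1+s)\,k_{n,s}$. Using the explicit expression~\eqref{cNms:def}, the common $m$-dependent sum and the factor $4^s/\Gamma(-s)$ cancel in the ratio $c_{n,m,s}/c_{1,m,s}$, leaving
\begin{equation*}
\frac{c_{n,m,s}}{c_{1,m,s}}=\frac{\Gamma(n/2+s)\,\pi^{1/2}}{\Gamma(1/2+s)\,\pi^{n/2}}.
\end{equation*}
Plugging this into the prefactor and using $\Gamma(1/2)=\sqrt\pi$, the $\Gamma(1/2+s)$ and $\sqrt\pi$ simplify, leaving precisely $\Gamma(1+s)\cdot 2^{2s-1}\Gamma(n/2+s)/\pi^{n/2}=\Gamma(1+s)\,k_{n,s}$, which proves~\eqref{torsion}.

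For the second claim I would set $\beta=s-\ell$ with $\ell\in\N$ and $s-\ell>-1$. Then $1+\beta-s=1-\ell\in\Z\setminus\N$, so by the convention stated at the end of Theorem~\ref{prop:general laplacians} the prefactor carries the factor $1/\Gamma(1-\ell)=0$. To conclude that the whole right-hand side of~\eqref{eq:general laplacians} vanishes, I would briefly check that the integral remains finite: for any $x$ in the interior of $E_a$ the quantity $u_1(x)+\langle x,\theta\rangle_a^2\geq u_1(x)>0$ is bounded away from zero uniformly in $\theta\in\partial E_a$, the argument $\widetilde x_\theta^2\in[0,1)$ stays in a compact subset of $(-1,1)$, and the hypergeometric function is continuous there; since $\partial E_a$ is compact, the integrand is bounded and the integral is finite, so the vanishing prefactor kills everything.

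I do not expect a serious obstacle: the argument is a direct specialization of Theorem~\ref{prop:general laplacians}. The only subtlety worth handling carefully is the constant bookkeeping in the first part (especially the cancellation of the $m$- and $s$-dependent factors in $c_{n,m,s}/c_{1,m,s}$) and, for the second part, making sure the convention $1/\Gamma(1-\ell)=0$ is applied to a factor that multiplies a finite integral, which is why the short uniform boundedness observation above is needed.
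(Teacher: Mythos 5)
Your proposal is correct and follows essentially the same route as the paper: specialize Theorem~\ref{prop:general laplacians} to $\beta=s$ (where $\hf(s+\tfrac12,0;\tfrac12;\cdot)\equiv1$ and the constant ratio $c_{n,m,s}/c_{1,m,s}=\Gamma(n/2+s)\sqrt{\pi}/(\pi^{n/2}\Gamma(\tfrac12+s))$ yields $\Gamma(1+s)k_{n,s}$) and to $\beta=s-\ell$ (where $1/\Gamma(1-\ell)=0$ kills the finite integral). The only point worth adding is that for $s\in\N$ the ratio $c_{n,m,s}/c_{1,m,s}$ must be checked with the second line of~\eqref{cNms:def}, where the same cancellation gives the identical value, as the paper notes; your finiteness check of the integral is a harmless extra precaution.
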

\begin{proof}
Both statements follow by just considering respectively~$\beta=s$ and~$\beta=s-\ell$ in~\eqref{eq:general laplacians}.
Note that for~\eqref{torsion} we are using that~$\hf\Big(s+\frac12,0;\frac12; t \Big)=1$ for~$t\in (-1,1)$ and, moreover, since 
\begin{align*}
c_{n,m,s} = \frac{4^s \Gamma(n/2+s)}{\pi^{n/2}\Gamma(-s)}\bigg(\sum_{k=-m}^m(-1)^k\binom{2m}{m-k}\bigg)^{-1}, 
\qquad s\in (0,m)\setminus\N,
\end{align*}
we have
\begin{align*}
\frac{c_{n,m,s}}{ c_{1,m,s}}=\frac{\Gamma(n/2+s)\sqrt{\pi}}{ \pi^{n/2}\Gamma(\frac{1}{2}+s)}.
\end{align*}
Note that the same holds for~$s\in \N$ and hence
\[
\frac{2^{2s-1}\Gamma\big(\frac12+s\big)\,c_{n,m,s}}{\Gamma\big(\frac12\big)\,c_{1,m,s}}= \frac{2^{2s-1}\Gamma(n/2+s) }{ \pi^{n/2} }=k_{n,s}.
\]
\end{proof}

\begin{proof}[Proof of Theorem~\ref{torsion:thm}] 
Using the rotation invariance of the fractional Laplacian, we may assume that~$A$ is a diagonal matrix.  By~\eqref{torsion}, we have that
\begin{align}\label{tau}
\tau(x):=\frac{1}{\Gamma(1+s)k_{n,s}\;J_0}\big(1-|x|_a^2\big)_+^s,
\qquad x\in\R^n,
\end{align}
satisfies pointwisely that
\begin{align}\label{tauP}
\Ds \tau(x)=1 
\quad\text{for }x\in E_a.
\end{align}
Moreover,~$\tau\in \cH^s_0(E_a)$. For~$s\in\N$ this is clear, so let~$s\notin\N$ and~$m\in \N$ such that~$s\in(m,m+1)$. We argue with the Gagliardo-Nirenberg interpolation inequality (see, \textit{e.g.},~\cite[Theorem 1]{BM18}),
\begin{equation}\label{eq:interpolation}
\|f\|_{W^{s,p}(E_a)}\leq C\|f\|_{W^{s_1,p_1}(E_a)}^{\theta}\|f\|_{W^{s_2,p_2}(E_a)}^{\theta}\qquad\text{for all~$f\in W^{s_1,p_1}(E_a)\cap W^{s_2,p_2}(E_a)$,}
\end{equation}
which for some~$C$ independent of~$f$ is satisfied for~$1< p,p_1,p_2\leq\infty$,~$0<s_1<s<s_2$ satisfying for some~$\theta\in(0,1)$ the relation
\[
s=\theta s_1+(1-\theta)s_2\quad\text{and}\quad \frac{1}{p}=\frac{\theta}{p_1}+\frac{1-\theta}{p_2}.
\]
Next note that for any~$\beta,\beta'\in\N_0^n$ with~$|\beta|=m$ and~$|\beta'|=m+1$ there is a constant~$\tilde{C}>0$ such that 
\[
|\partial^{\beta}\tau(x)|\leq \tilde{C}(1-|x|_a)^{s-m}\quad\text{and}\quad|\partial^{\beta'}\tau(x)|\leq \tilde{C}(1-|x|_a)^{s-m-1}\quad\text{for~$x\in E_a$}
\]
so that~$\tau\in W^{m,\infty}(E_a)$ and also~$\tau\in W^{m+1,p_2}(E_a)$ for~$1<p_2<\frac{1}{1+m-s}$. By~\eqref{eq:interpolation} with~$\theta=1+m-s$,~$s_1=m$,~$s_2=m+1$, and~$p_1=\infty$, we then have~$\tau\in W^{s,p}(E_a)$ for all~$p=\frac{p_2}{1-\theta}<\frac{1}{(1+m-s)(s-m)}$. Since~$(1+m-s)(s-m)\leq \frac{1}{4}$, we have in particular~$\tau \in H^s(E_a)=W^{s,2}(E_a)$. Since also~$\tau/\dist(\cdot,\partial E_a)^{s}\in L^{\infty}(E_a)$, it follows that~$\tau\in\cH^s_0(E_a)$ also for~$s\notin\N$ (see, for example~\cite[Section 4.3.2, equation 7]{T78}). But then, by uniqueness of weak solutions,~$\tau$ is the unique weak solution of~\eqref{tauP} in~$\cH^s_0(E_a)$.  
\end{proof}

\begin{remark}[Torsion function in an ellipse]\label{t:rem} In two dimensions, the constant of the torsion function~$\tau$ can be computed explicitly with a direct computation. Let~$\alpha_1,\alpha_2>0$ and~$\cE=\{x\in\R^2\::\: \alpha_1x_1^2+\alpha_2x_2^2<1\}$.  For~$a=\frac{\alpha_2}{\alpha_1}$ let~$\tau$ be given by~\eqref{tau}.  Finally, let~$\widetilde \tau:\cE\to \R$ be given by
\begin{align*}
 \widetilde\tau(x)
 :&=\alpha_1^{-s}\tau(\alpha_1^{1/2} x)
 =\frac{1}{\alpha_1^{s}\Gamma(1+s)k_{n,s}\;J_0}\big(1-\alpha_1 x_1^2 - a \alpha_1 x_2^2\big)_+^s \\
 &=\frac{1}{2^{2s}\Gamma(1+s)^2\alpha_1^{s+1/2} \alpha_2^{-1/2} \hf\Big(s+1, \frac12; 1;1-\frac{\alpha_1}{\alpha_2}\Big)}
 \big(1-\alpha_1 x_1^2 - \alpha_2 x_2^2\big)_+^s,
\end{align*}
since, by Lemma~\ref{asymptotic-ji},
\begin{align*}
 J_0=a^{-1/2}B\Big(\frac12,\frac{1}2\Big) \hf\Big(s+1, \frac12; 1;1-\frac{1}{a}\Big)
 =\left(\frac{\alpha_2}{\alpha_1}\right)^{-1/2}\pi \hf\Big(s+1, \frac12; 1;1-\frac{\alpha_1}{\alpha_2}\Big)
 .
\end{align*}
Then, for~$x\in\cE$,~$(-\Delta)^s \widetilde \tau(x)
 =(-\Delta)^s \tau(ax)=1.$
\end{remark}

The case~$\beta=s+j$ with~$j\in \N$ in Theorem~\ref{prop:general laplacians} is particularly useful, and therefore we state it as a corollary.

\begin{corollary}\label{u-s-j-calculations}
Let~$j\in\N$. Then, for~$x\in E_a$,
\begin{align}\label{lapla u_s+j}
\Ds  u_{s+j}(x)=  \ \frac{\Gamma(1+s+j)}{\Gamma(1+j)}k_{n,s}\int_{\partial E_a}\big(  u_1(x)+\langle x,\theta\rangle_a^2\big)^{j} \;
\hf\Big(s+\frac12,-j;\frac12;\frac{\langle x,\theta\rangle_a^2}{ u_1(x)+\langle x,\theta\rangle_a^2}\Big) \; \mu(d\theta).
\end{align}
In the particular cases~$j=1,2$, Table~\ref{table:lapla u_s+j} hold.
\begin{table}[ht]
\centering
\begin{tabular}{c||c}
$\ j\ $ &~$\Gamma(1+s+j)^{-1}k_{n,s}^{-1}\Ds  u_{s+j}(x)$ \ for \ $x\in E_a$ \\ [.25em]
\hline \\ [-.5em]
$-\lfloor s\rfloor-1$ &~$0$ \\ 
$\vdots$ &~$\vdots$ \\ [.5em]
$-1$ &~$0$ \\ [1em]
$0$ &~$\displaystyle\int_{\partial E_a}\mu(d\theta)$ \\ [2em]
$1$ &~$\displaystyle
\int_{\partial E_a}\Big( u_1(x)-2s\langle x,\theta\rangle^2_a \Big)\mu(d\theta)$ \\ [2em]
$2$ &~$\displaystyle\frac12
\int_{\partial E_a}\Big( u_1(x)^2-4s u_1(x)\langle x,\theta\rangle_a^2+\frac{4s(s-1)}{3}\langle x,\theta\rangle_a^4\Big)\mu(d\theta)$ 
\end{tabular}
\caption{Significant examples.}\label{table:lapla u_s+j}
\end{table}
\end{corollary}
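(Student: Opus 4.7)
The proof splits into three pieces: the general formula \eqref{lapla u_s+j}, the vanishing rows of the table ($j<0$ and $j=0$), and the explicit formulas for $j=1,2$.

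For the general formula, I would specialize Theorem~\ref{prop:general laplacians} to $\beta=s+j$ with $j\in\N$. Since $j\geq 1>0$ we have $\beta>-1$, so the theorem applies. Under this choice, $\beta-s=j$, so the factor $(u_1(x)+\langle x,\theta\rangle_a^2)^{\beta-s}$ becomes $(u_1(x)+\langle x,\theta\rangle_a^2)^{j}$, the hypergeometric function reads $\hf(s+\frac12,-j;\frac12;\cdot)$, and the quotient of Gammas is $\Gamma(1+s+j)/\Gamma(1+j)$. The remaining scalar prefactor $2^{2s-1}\Gamma(\tfrac12+s)c_{n,m,s}/(\Gamma(\tfrac12)c_{1,m,s})$ was already shown in the proof of Corollary~\ref{cor:torsion} to equal $k_{n,s}$, so \eqref{lapla u_s+j} follows immediately.

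The rows of Table~\ref{table:lapla u_s+j} with $j\in\{-\lfloor s\rfloor-1,\dots,-1\}$ follow directly from the second statement of Corollary~\ref{cor:torsion} (after renaming $\ell=-j$); the row $j=0$ follows from the first statement of that corollary once one recognizes $J_0=\int_{\partial E_a}\mu(d\theta)$, and is also consistent with \eqref{lapla u_s+j} at $j=0$ via the identity $\hf(s+\tfrac12,0;\tfrac12;z)\equiv 1$. So these rows need no further work.

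The cases $j=1,2$ are the only part requiring computation, and the key observation is that by \eqref{qk} the Pochhammer symbol $(-j)_k$ vanishes for $k>j$, so the hypergeometric series \eqref{s} truncates to a polynomial of degree $j$. A direct calculation gives
\begin{align*}
\hf\Big(s+\tfrac12,-1;\tfrac12;z\Big)&=1-(2s+1)z, \\
\hf\Big(s+\tfrac12,-2;\tfrac12;z\Big)&=1-2(2s+1)z+\tfrac{(2s+1)(2s+3)}{3}z^2.
\end{align*}
Substituting $z=\langle x,\theta\rangle_a^2/(u_1(x)+\langle x,\theta\rangle_a^2)$ and multiplying by $(u_1(x)+\langle x,\theta\rangle_a^2)^j$ clears the denominators, producing polynomials in $u_1(x)$ and $\langle x,\theta\rangle_a^2$. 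For $j=1$ this yields $u_1(x)-2s\langle x,\theta\rangle_a^2$ immediately, matching the third row once one notes that the prefactor $\Gamma(2+s)/\Gamma(2)=\Gamma(2+s)$ is exactly what is divided out in the table. For $j=2$, expanding $(U+T)^2-2(2s+1)T(U+T)+\tfrac{(2s+1)(2s+3)}{3}T^2$ with $U=u_1(x)$ and $T=\langle x,\theta\rangle_a^2$ gives coefficients $1$ for $U^2$, $-4s$ for $UT$, and, after collecting the fraction $\frac{3-6(2s+1)+(2s+1)(2s+3)}{3}=\frac{4s(s-1)}{3}$, the correct coefficient for $T^2$; the overall factor $1/\Gamma(3)=1/2$ produces the $\tfrac12$ shown in the table. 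The only real obstacle here is this last arithmetic simplification for $j=2$, which is routine but easy to misalign by constants.
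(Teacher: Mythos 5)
Your proposal is correct and follows essentially the same route as the paper: specialize Theorem~\ref{prop:general laplacians} to $\beta=s+j$, identify the constant $k_{n,s}$ as in the proof of Corollary~\ref{cor:torsion}, handle the rows $j\le 0$ via that corollary, and use the truncation \eqref{qk} of the hypergeometric series to reduce the $j=1,2$ cases to the explicit polynomials (your computations $v-2sw$ and $v^2-4svw+\tfrac{4s(s-1)}{3}w^2$, including the simplification $3-6(2s+1)+(2s+1)(2s+3)=4s(s-1)$, are exactly the entries of Table~\ref{table:hypergeometric}). The only difference is that you write out the arithmetic the paper leaves tabulated, and it checks out.
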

\begin{proof} 
Identity~\eqref{lapla u_s+j} simply follows by considering~$\beta=s+j$ in~\eqref{eq:general laplacians}.
In order to deduce the particular cases listed in Table~\ref{table:lapla u_s+j},
we need to remark that, as one of the arguments in the hypergeometric function is a negative integer,
then the hypergeometric function reduces to a polynomial, see~\eqref{qk}. 
Such polynomials for~$j=1,2$ can be found in Table~\ref{table:hypergeometric}.
The calculation of~$k_{n,s}$ follows as in the proof of Corollary~\ref{cor:torsion}.
\begin{table}[ht]
\centering
\begin{tabular}{c||c}
$\ j\ $ &~$\displaystyle \big(v+w \big)^{j}\hf\Big(s+\frac12,-j;\frac12;\frac{w}{v+w}\Big)$ \ for \ $t\in(-1,1)$ \\ [1em]
\hline \\ [-.5em]
$1$ &~$v-2sw$  \\ [1em]
$2$ &~$\displaystyle v^2-4svw+\frac{4s(s-1)}3w^2$
\end{tabular}
\caption{The explicit polynomial form of the hypergeometric expression.}\label{table:hypergeometric}
\end{table}
\end{proof}

\begin{proof}[Proof of Theorem~\ref{thm:s+j-intro}] 
Using the rotation invariance of the fractional Laplacian, we may assume that~$A$ is a diagonal matrix.  As mentioned above, this Theorem follows immediately from Corollary~\ref{cor:torsion} for~$j\in \Z\setminus \N_0$ and from Corollary~\ref{u-s-j-calculations}, since for~$j\in \N_0$ and~$v,w\geq0$ we have
\begin{multline*}
(v+w)^j \hf(s+\frac{1}{2},-j;\frac{1}{2};\frac{w}{v+w})=(v+w)^j\sum_{k=0}^{j}\frac{(s+\frac{1}{2})_k(-j)_k}{(\frac{1}{2})_kk!}\frac{w^k}{(v+w)^k} =\\
=\sum_{k=0}^{j}\frac{\Gamma(s+\frac{1}{2}+k)\Gamma(\frac{1}{2})}{\Gamma(s+\frac{1}{2})\Gamma(\frac{1}{2}+k)} \binom{j}{k}(-1)^kw^k (v+w)^{j-k}.
\end{multline*}
\end{proof}

\subsection{Auxiliary calculations for the counterexample}
Recall the definition of~$J_0$,~$J_i^{(k)}$, and~$\mu$ given in~\eqref{J0},~\eqref{Ji}, and~\eqref{constant-measure}.

\begin{lemma}\label{polynomial-of-degree-one}
Let~$U(x):={\big(1-a_k^{1/2}x_k\big)}\,u_{s}(x)$ for~$x\in\R^n,\ k\in\N.$ Then, for any~$x\in E_a$,
\begin{align}\label{polynomial-of-degree-two-part1}
\frac{\Ds U(x)}{k_{n,s}\Gamma(1+s)} = J_0-\big(J_0+2sJ_1^{(k)}\big)a_k^{1/2}x_k.
\end{align}
\end{lemma}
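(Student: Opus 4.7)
The plan is to exploit linearity of $\Ds$ and reduce the claim to two ingredients already at hand: the torsion identity from Corollary~\ref{cor:torsion} and the recurrence formula~\eqref{lapla 1u-beta bis} of Lemma~\ref{lem:recurrence}. Writing $U = u_s - a_k^{1/2}\, x_k\, u_s$, linearity gives
\begin{equation*}
\Ds U(x) = \Ds u_s(x) - a_k^{1/2}\,\Ds\bigl(x_k u_s(x)\bigr),
\end{equation*}
and Corollary~\ref{cor:torsion} immediately supplies $\Ds u_s(x) = \Gamma(1+s)\,k_{n,s}\,J_0$, which produces the first term on the right-hand side of~\eqref{polynomial-of-degree-two-part1}. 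So all that remains is to compute the second summand.

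For that summand I would apply Lemma~\ref{lem:recurrence} (with $\beta = s$) to obtain
\begin{equation*}
\Ds\bigl(x_k u_s(x)\bigr) = -\frac{1}{2(s+1) a_k}\,\partial_k \Ds u_{s+1}(x),
\end{equation*}
and then plug in the explicit expression for $\Ds u_{s+1}$ given by the $j=1$ row of Table~\ref{table:lapla u_s+j}:
\begin{equation*}
\Ds u_{s+1}(x) = \Gamma(s+2)\,k_{n,s}\int_{\partial E_a}\bigl(u_1(x)-2s\langle x,\theta\rangle_a^2\bigr)\mu(d\theta).
\end{equation*}
Since $u_1(x)=1-|x|_a^2$ and $\langle x,\theta\rangle_a=\sum_i a_i x_i \theta_i$, differentiating under the integral (justified exactly as in the proof of Lemma~\ref{lem:recurrence}) yields
\begin{equation*}
\partial_k \Ds u_{s+1}(x) = -2 a_k\,\Gamma(s+2)\,k_{n,s}\int_{\partial E_a}\bigl(x_k+2s\,\theta_k\langle x,\theta\rangle_a\bigr)\mu(d\theta).
\end{equation*}

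The remaining step is evaluating the two integrals. The first is $x_k J_0$ by definition~\eqref{J0}. For the second, expand $\langle x,\theta\rangle_a = \sum_i a_i x_i \theta_i$ and invoke the symmetry of $\partial E_a$ and of the measure $\mu(d\theta)$ under the reflections $\theta_i\mapsto -\theta_i$: the cross terms with $i\neq k$ vanish, leaving only $a_k x_k \int_{\partial E_a}\theta_k^2\,\mu(d\theta) = x_k J_1^{(k)}$ by~\eqref{Ji}. Collecting everything gives
\begin{equation*}
\Ds\bigl(x_k u_s(x)\bigr) = \Gamma(1+s)\,k_{n,s}\,x_k\bigl(J_0+2sJ_1^{(k)}\bigr),
\end{equation*}
and combining with the torsion identity produces~\eqref{polynomial-of-degree-two-part1} after dividing by $k_{n,s}\Gamma(1+s)$. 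There is no real obstacle here; the only point requiring care is the symmetry argument killing the off-diagonal $\theta_i \theta_k$ integrals, which depends on $\partial E_a$ being invariant under coordinate reflections in the diagonal setup of Section~\ref{not:ellipse}.
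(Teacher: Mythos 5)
Your proposal is correct and follows essentially the same route as the paper: split $U=u_s-a_k^{1/2}x_k u_s$, use the torsion identity (the $j=0$ case) for the first term, apply \eqref{lapla 1u-beta} with $\beta=s$ together with the $j=1$ row of Table~\ref{table:lapla u_s+j} for the second, and kill the off-diagonal $\int_{\partial E_a}\theta_i\theta_k\,\mu(d\theta)$ terms by the reflection symmetry of $\partial E_a$ and $\mu$. The constants also check out, since $\Gamma(s+2)=(s+1)\Gamma(s+1)$ absorbs the $\tfrac{1}{2(s+1)a_k}$ factor exactly as in the paper's computation.
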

\begin{proof}
From Lemma~\ref{lem:recurrence} and Corollary~\ref{u-s-j-calculations} it follows that
\begin{multline*}
\frac{\Ds U(x)}{k_{n,s}\Gamma(1+s)} =J_0+\frac{a_k^{-1/2}}{2}\partial_k\int_{\partial E_a}\big( u_1(x)-2s\langle x,\theta\rangle_a^2\big)\;\mu(d\theta) \ = \\
= J_0 - a_k^{1/2} J_0 x_k - 2sa_k^{1/2}\int_{\partial E_{a}}\langle x,\theta\rangle_a\theta_k\;\mu(d\theta)=J_0-\big(J_0+2sJ_1^{(k)}\big)a_k^{1/2}x_k,
\end{multline*}
since, by symmetry,~$\int_{\partial E_{a}}\theta_j\theta_k \; \mu(d\theta)=0$ for~$j\in\{1,\ldots,n\}\setminus\{k\}.$
\end{proof}

\begin{lemma}\label{polynomial-of-degree-two}
Let~$U(x):=\big(1-a_k^{1/2}x_k\big)^2\,u_{s}(x)$ for~$x\in\R^n.$ Then for any~$x\in E_a$ we have
\begin{multline*}
\frac{\Ds U(x)}{k_{n,s}\Gamma(1+s)} =\big[J_0+5sJ_1^{(k)}+2s(s-1)J_2^{(k)}\big]a_kx_k^2 -2\big[J_0+2sJ_1^{(k)}\big]a_k^{1/2}x_k +J_0-sJ_1^{(k)} +\\
+s\sum_{\overset{i=1}{i\neq k}}^n \Big[J_1^{(k)}+2(s-1)a_ia_k\int_{\partial E_a}\theta_i^2\theta_k^2\;\mu(d\theta)\Big]a_ix_i^2.
\end{multline*}
\end{lemma}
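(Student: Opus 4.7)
The plan is to reduce the problem to the computations already available. First I would expand
\[
U(x) = u_s(x) - 2a_k^{1/2}x_k\,u_s(x) + a_k\, x_k^2\, u_s(x)
\]
and apply $\Ds$ term by term. The piece $\Ds u_s$ is given by Corollary~\ref{cor:torsion} and contributes~$k_{n,s}\Gamma(1+s)J_0$. The piece $\Ds(x_k u_s)$ can be read off from the identity established inside the proof of Lemma~\ref{polynomial-of-degree-one}; subtracting the $\Ds u_s$ contribution there yields $\Ds(x_k u_s) = k_{n,s}\Gamma(1+s)(J_0+2sJ_1^{(k)})x_k$, so the cross term contributes exactly the advertised~$-2[J_0+2sJ_1^{(k)}]a_k^{1/2}x_k$.

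The real work is to evaluate $\Ds(x_k^2 u_s)$. I would use formula~\eqref{lapla 2u-beta bis} from Lemma~\ref{lem:recurrence}, which rewrites this as a linear combination of $\Ds u_{s+1}$ and $\partial_k^2\Ds u_{s+2}$. Both Laplacians are given explicitly by Corollary~\ref{u-s-j-calculations} (Table~\ref{table:lapla u_s+j}) as integrals over $\partial E_a$ of polynomials in $u_1(x)=1-|x|_a^2$ and $\langle x,\theta\rangle_a^2$. For $\Ds u_{s+1}$ one integrates $u_1(x)-2s\langle x,\theta\rangle_a^2$ in~$\theta$; using that $u_1$ is $\theta$-independent and that $\int_{\partial E_a}\theta_i\theta_j\,\mu(d\theta)=0$ for $i\neq j$ by the symmetry $\theta_i\mapsto -\theta_i$, this reduces to $J_0-\sum_i(J_0+2sJ_1^{(i)})a_i x_i^2$ (using also $a_i\int\theta_i^2\,\mu(d\theta)=J_1^{(i)}$).

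Next I would differentiate the integrand of $\Ds u_{s+2}$ twice in~$x_k$, passing $\partial_k^2$ inside the integral (justified as in Lemma~\ref{lem:recurrence}). Using the elementary identities $\partial_k u_1=-2a_k x_k$ and $\partial_k\langle x,\theta\rangle_a=a_k\theta_k$, a direct expansion produces the sum of six terms with integrands constant,~$x_k^2$,~$\langle x,\theta\rangle_a^2$,~$x_k\theta_k\langle x,\theta\rangle_a$,~$\theta_k^2 u_1$, and~$\theta_k^2\langle x,\theta\rangle_a^2$ (with appropriate $s$-dependent coefficients). The symmetry of $\partial E_a$ once again kills every monomial in $\theta$ with an odd power of any coordinate, so after integration only $J_0,\ J_1^{(i)},\ J_2^{(k)}$ and the cross-integrals $\int_{\partial E_a}\theta_i^2\theta_k^2\,\mu(d\theta)$ for $i\neq k$ survive.

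Finally, substituting back into~\eqref{lapla 2u-beta bis}, multiplying by $a_k$, and combining with the two earlier contributions, I would collect the result by powers of the variables. The constant, the $x_k$-linear, the $x_k^2$, and the $x_i^2$ ($i\neq k$) coefficients should each simplify to the expressions in the statement; in particular the $x_i^2$ coefficient for $i\neq k$ features a delicate cancellation of the $J_0$ and $J_1^{(i)}$ contributions coming from the two parts of~\eqref{lapla 2u-beta bis}, leaving only $sa_i[J_1^{(k)}+2(s-1)a_ia_k\int_{\partial E_a}\theta_i^2\theta_k^2\,\mu(d\theta)]$. I expect the main obstacle to be purely bookkeeping: correctly expanding $\partial_k^2$ of the three integrands $u_1^2$, $u_1\langle x,\theta\rangle_a^2$, $\langle x,\theta\rangle_a^4$ and then matching Gamma-function factors through the prefactors $\Gamma(2+s)=(1+s)\Gamma(1+s)$ and $\Gamma(3+s)=(1+s)(2+s)\Gamma(1+s)$ so that the overall normalization $k_{n,s}\Gamma(1+s)$ emerges cleanly.
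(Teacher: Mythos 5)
Your proposal is correct and follows essentially the same route as the paper: decompose via Lemma~\ref{lem:recurrence} (in particular~\eqref{lapla 2u-beta bis}) and Table~\ref{table:lapla u_s+j}, differentiate under the integral, kill odd moments by symmetry, and collect terms -- including the exact cancellation of the $J_0$ and $J_1^{(i)}$ contributions in the $x_i^2$ ($i\neq k$) coefficient that you anticipate. The only cosmetic difference is that you read the linear cross term off Lemma~\ref{polynomial-of-degree-one} rather than recomputing it through $\partial_k\Ds u_{s+1}$, which is the same calculation.
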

\begin{proof}
Using~\eqref{lapla 1u-beta bis} and~\eqref{lapla 2u-beta bis} of Lemma~\ref{lem:recurrence}, we have
\begin{align}\label{ds ansatz}
\begin{split}
\Ds U(x) = \ &  
\Ds u_{s}(x)+\frac{a_k^{-1/2}}{s+1}	\, \partial_k\Ds u_{s+1}(x)+\frac{1}{2(s+1)}	\, \Ds u_{s+1}(x)\; + \\
& +\frac{1}{4(s+1)(s+2)a_k}	\, \partial_{k}^2\Ds u_{s+2}(x),
\end{split}
\qquad\ x\in E_a.
\end{align}
Using the identities in Table~\ref{table:lapla u_s+j}, we have
\begin{align}\label{first step}
\begin{split}
&\frac{\Ds U(x)}{\Gamma(1+s)\,k_{n,s}} = J_0 +\frac1{a_k^{1/2}} \, \partial_k\int_{\partial E_a}\Big( u_1(x)-2s\langle x,\theta\rangle^2_a\Big)\mu(d\theta) \ + \\
& \quad +\frac12\int_{\partial E_a}\Big( u_1(x)-2s\langle x,\theta\rangle^2_a\Big)\mu(d\theta) +
\frac1{8a_k}\partial_k^2\int_{\partial E_a}\Big( u_1(x)^2-4s u_1(x)\langle x,\theta\rangle_a^2+\frac{4s(s-1)}{3}\langle x,\theta\rangle_a^4\Big)\mu(d\theta).
\end{split}
\end{align}
In order to compute~\eqref{first step}, we consider the following differential identities
\begin{align}
& \partial_k  u_1(x)=-2a_kx_k, \qquad\qquad \ \  \partial_k^2  u_1(x)=-2a_k, 		\label{some derivatives begin} \\
& \partial_k\langle x,\theta\rangle_a^2=2a_k\langle x,\theta\rangle_a\theta_k, \qquad \partial_k^2\langle x,\theta\rangle_a^2=2a_k^2\theta_k^2, 	\notag\\
& \partial_k^2  u_1(x)^2=\partial_k\big(2 u_1(x)\,\partial_k u_1(x)\big)=2\big(\partial_k  u_1(x)\big)^2+2 u_1(x)\,\partial_k^2 u_1(x)=8a_k^2x_k^2-4a_k u_1(x), \notag\\
\begin{split}
	& \partial_k^2\big(  u_1(x)\,\langle x,\theta\rangle^2_a \big)=\langle x,\theta\rangle^2_a\,\partial_k^2 u_1(x)+2\partial_k  u_1(x) \,\partial_k\langle x,\theta\rangle^2_a+ u_1(x)\,\partial_k^2\langle x,\theta\rangle^2_a \notag\\
	&\hspace{2.4cm} =-2a_k\langle x,\theta\rangle^2_a-8a_k^2\langle x,\theta\rangle_a \theta_k x_k+2a_k^2 u_1(x)\theta_k^2,
\end{split} \\
& \partial_k^2\langle x,\theta\rangle^4_a=\partial_k\big(2\langle x,\theta\rangle^2_a\,\partial_k\langle x,\theta\rangle^2_a\big)
=2\big(\partial_k\langle x,\theta\rangle_a^2\big)^2+2\langle x,\theta\rangle^2_a\,\partial_k^2\langle x,\theta\rangle^2_a
=12a_k^2\langle x,\theta\rangle^2_a\theta_k^2. \label{some derivatives end}
\end{align}
In view of~\eqref{some derivatives begin}-\eqref{some derivatives end}, equation~\eqref{first step} can be rewritten
\begin{align*}
& \frac{\Ds U(x)}{\Gamma(1+s)\,k_{n,s}} \,  \ = \\  
& =\ J_0 -2a_k^{1/2}J_0x_k -4sa_k^{1/2}J_1^{(k)}x_k+\frac12J_0  u_1(x) -s\sum_{i=1}^n a_iJ_1^{(i)}x_i^2+\frac1{8a_k}\Big(8a_k^2J_0x_k^2-4J_0a_k u_1(x)+8sa_k\sum_{i=1}^n a_iJ_1^{(i)}x_i^2  \\
& \qquad +32sa_k^2J_1^{(k)} x_k^2-8sa_kJ_1^{(k)} u_1(x)
+16s(s-1)a_k^2\sum_{i=1}^n a_i^2x_i^2\int_{\partial E_a}\theta_i^2\theta_k^2\;\mu(d\theta) \Big)  \\
& =\ J_0 -2a_k^{1/2}J_0x_k -4sa_k^{1/2}J_1^{(k)}x_k +a_kJ_0x_k^2  \\
& \qquad +5sa_kJ_1^{(k)} x_k^2-sJ_1^{(k)}+sJ_1^{(k)}\sum_{\overset{i=1}{i\neq k}}^n a_ix_i^2
+2s(s-1)a_kx_k^2J_2^{(k)}+2s(s-1)a_k\sum_{\overset{i=1}{i\neq k}}^n a_i^2x_i^2\int_{\partial E_a}\theta_i^2\theta_k^2\;\mu(d\theta) \\
& =\ \big[J_0+5sJ_1^{(k)}+2s(s-1)J_2^{(k)}\big]a_kx_k^2 -2\big[J_0+2sJ_1^{(k)}\big]a_k^{1/2}x_k +J_0-sJ_1^{(k)} \\
& \qquad +sJ_1^{(k)}\sum_{\overset{i=1}{i\neq k}}^n a_ix_i^2+2s(s-1)a_k\sum_{\overset{i=1}{i\neq k}}^n a_i^2x_i^2\int_{\partial E_a}\theta_i^2\theta_k^2\;\mu(d\theta).
\end{align*}
\end{proof}

\begin{lemma}\label{polynomial-of-degree-twob}
Let~$U(x)=u_{s}(x)\sum_{\overset{i=1}{i\neq k}}^na_ix_i^2$ for~$x\in\R^n.$ Then, for any~$x\in E_a$,
\begin{multline*}
\frac{\Ds U(x)}{k_{n,s}\Gamma(1+s)} = 
s\Big[J_0-J_1^{(k)}+2(s-1)\big(J_1^{(k)}-J_2^{(k)}\big)\Big]a_k x_k^2-s\big(J_0-J_1^{(k)}\big) \, + \\
+\sum_{\overset{i=1}{i\neq k}}^n\Big[(s+1)J_0+4sJ_1^{(i)}-sJ_1^{(k)}+2s(s-1)J_2^{(i)}
+2s(s-1)\sum_{\overset{h=1}{h\neq k,i}}^n a_ia_h\int_{\partial E_a}\theta_h^2\theta_i^2\mu(d\theta)\Big]a_ix_i^2.
\end{multline*}
\end{lemma}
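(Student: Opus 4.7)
I would reduce this directly to Lemmas~\ref{polynomial-of-degree-one} and~\ref{polynomial-of-degree-two} by means of the algebraic identity
$$a_i x_i^2\,u_s = \bigl(1 - a_i^{1/2} x_i\bigr)^2 u_s - u_s + 2 a_i^{1/2} x_i\,u_s.$$
Writing $U = \sum_{i\ne k} a_i x_i^2 u_s$, each summand can then be evaluated from already-established formulas: Corollary~\ref{cor:torsion} gives $\Ds u_s$; Lemma~\ref{polynomial-of-degree-one} applied with $k$ replaced by $i$ (combined with $\Ds u_s$) isolates $a_i^{1/2}\Ds(x_i u_s)$; and Lemma~\ref{polynomial-of-degree-two} with $k$ replaced by $i$ handles $\Ds((1-a_i^{1/2} x_i)^2 u_s)$.

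A pleasant cancellation then occurs: the term linear in $x_i$ coming from Lemma~\ref{polynomial-of-degree-two} is annihilated by $2 a_i^{1/2}\Ds(x_i u_s)$, and the $J_0$ in the constant piece is cancelled against the one coming from $-\Ds u_s$. What remains is, for every $i \ne k$,
$$\frac{a_i\,\Ds(x_i^2 u_s)}{k_{n,s}\Gamma(1+s)} = \bigl[J_0 + 5sJ_1^{(i)} + 2s(s-1)J_2^{(i)}\bigr] a_i x_i^2 - sJ_1^{(i)} + s\sum_{h\ne i}\Big[J_1^{(i)} + 2(s-1)\,a_i a_h\int_{\partial E_a}\theta_h^2\theta_i^2\,\mu(d\theta)\Big] a_h x_h^2.$$
I would then sum over $i \ne k$ and regroup the result by the quadratic monomial $a_l x_l^2$.

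The aggregate of constants, $-s\sum_{i\ne k} J_1^{(i)}$, collapses to $-s(J_0 - J_1^{(k)})$ via the key identity $\sum_{i=1}^n J_1^{(i)} = J_0$, itself a consequence of $\sum_i a_i \theta_i^2 = |\theta|_a^2 = 1$ on $\partial E_a$. For the coefficient of $a_k x_k^2$ only the inner sum contributes (with $h=k$), and the analogous identity $\sum_{i=1}^n a_i a_k\int\theta_k^2\theta_i^2\,\mu(d\theta) = a_k\int\theta_k^2\,\mu(d\theta) = J_1^{(k)}$ reduces it to $s[J_0 - J_1^{(k)} + 2(s-1)(J_1^{(k)} - J_2^{(k)})]$. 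For $l \ne k$ both the diagonal term $i = l$ and the partial inner sum over $i \ne k, l$ (with $h = l$) feed into the coefficient, and the partial-sum identity $\sum_{i\ne k,l}J_1^{(i)} = J_0 - J_1^{(k)} - J_1^{(l)}$ produces precisely the expression in the statement.

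The main obstacle is purely combinatorial: tracking which pairs $(i,h)$ feed each fixed monomial $a_l x_l^2$, and using $|\theta|_a = 1$ on $\partial E_a$ in a disciplined way to absorb intermediate sums into the basic symbols $J_0$, $J_1^{(\cdot)}$, $J_2^{(\cdot)}$, and the cross integrals $\int_{\partial E_a}\theta_h^2\theta_i^2\,\mu(d\theta)$. No new analytic ingredient beyond Lemma~\ref{lem:recurrence} and Corollary~\ref{u-s-j-calculations} (already packaged inside Lemmas~\ref{polynomial-of-degree-one}--\ref{polynomial-of-degree-two}) should be required.
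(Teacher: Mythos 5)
Your argument is correct, but it follows a genuinely different route from the paper's. The paper proves this lemma by returning to Lemma~\ref{lem:recurrence}: it writes $\Ds U=\tfrac{1}{2(s+1)}\big((n-1)\Ds u_{s+1}+\tfrac1{2(s+2)}\sum_{i\neq k}a_i^{-1}\partial_i^2\Ds u_{s+2}\big)$, recomputes the second derivatives of the integrands from Table~\ref{table:lapla u_s+j} (exactly as in the proof of Lemma~\ref{polynomial-of-degree-two}), and only then regroups via $\sum_{i\neq k}J_1^{(i)}=J_0-J_1^{(k)}$ and $\sum_{i\neq k}a_ia_k\int_{\partial E_a}\theta_i^2\theta_k^2\,\mu(d\theta)=J_1^{(k)}-J_2^{(k)}$. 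You instead avoid any new differentiation through the identity $a_ix_i^2u_s=(1-a_i^{1/2}x_i)^2u_s-u_s+2a_i^{1/2}x_iu_s$, which lets you read off $a_i\Ds(x_i^2u_s)$ from Lemmas~\ref{polynomial-of-degree-one} and~\ref{polynomial-of-degree-two} (applied with $k$ replaced by $i$) together with~\eqref{torsion}; your intermediate formula for $a_i\Ds(x_i^2u_s)$ agrees with the one appearing in the paper's computation, and your final regrouping uses the same two symmetry identities, so the endgame coincides. Your route buys economy: the lemma becomes a purely algebraic corollary of the two preceding ones, with the cancellation of the linear and constant terms doing the work of the paper's derivative bookkeeping; the paper's route keeps the presentation uniform with Lemma~\ref{polynomial-of-degree-two} and self-contained at the level of the recurrence. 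One small point to make explicit when you write it up: for the coefficient of $a_kx_k^2$ the inner sums run only over $i\neq k$, so from the full-sum identity $\sum_{i=1}^na_ia_k\int_{\partial E_a}\theta_i^2\theta_k^2\,\mu(d\theta)=J_1^{(k)}$ you must subtract the diagonal contribution $J_2^{(k)}$; your stated coefficient $s\big[J_0-J_1^{(k)}+2(s-1)(J_1^{(k)}-J_2^{(k)})\big]$ shows you did this, but the subtraction deserves a line in the final text.
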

\begin{proof}
Using Lemma~\ref{lem:recurrence},
\begin{align*}
\Ds U(x)=\sum_{\overset{i=1}{i\neq k}}^n a_i\Ds\big(x_i^2u_{s}(x)\big)=\frac{1}{2(s+1)}\Big((n-1)\Ds u_{s+1}(x)+\frac{1}{2(s+2)}\sum_{\overset{i=1}{i\neq k}}^n\frac1{a_i}\partial_{i}^2\Ds u_{s+2}(x)\Big).
\end{align*}
By Table~\ref{table:lapla u_s+j} and by suitably adjusting~\eqref{some derivatives begin}-\eqref{some derivatives end} to the current situation, we deduce 
\begin{align*}
& \frac{\Ds U(x)}{\Gamma(1+s)k_{n,s}} \ = \\ 
& =\frac{n-1}2\int_{\partial E_a}\Big( u_1(x)-2s\langle x,\theta\rangle_a^2\Big)\;\mu(d\theta) 
+\sum_{\overset{i=1}{i\neq k}}^n\frac1{8a_i}\partial_i^2\int_{\partial E_a}\Big( u_1(x)^2-4s u_1(x)\langle x,\theta\rangle_a^2+\frac{4s(s-1)}{3}\langle x,\theta\rangle_a^4\Big)\mu(d\theta) \\
& =\frac{n-1}2J_0 u_1(x)-s(n-1)\sum_{j=1}^na_jJ_1^{(j)}x_j^2
+\sum_{\overset{i=1}{i\neq k}}^n\frac1{8a_i}\Big(8a_i^2J_0x_i^2-4J_0a_i u_1(x)+8sa_i\sum_{h=1}^n a_hJ_1^{(h)}x_h^2  \\
& \qquad +32sa_i^2J_1^{(i)} x_i^2-8sa_iJ_1^{(i)} u_1(x)
+16s(s-1)a_i^2\sum_{h=1}^n a_h^2x_h^2\int_{\partial E_a}\theta_h^2\theta_i^2\mu(d\theta) \Big)  \\
& =\sum_{\overset{i=1}{i\neq k}}^n\Big(a_iJ_0x_i^2 +4sa_iJ_1^{(i)} x_i^2-sJ_1^{(i)} u_1(x)
+2s(s-1)a_i\sum_{h=1}^n a_h^2x_h^2\int_{\partial E_a}\theta_h^2\theta_i^2\mu(d\theta) \Big).
\end{align*}
Observe that
 \begin{align*}
 \sum_{\overset{i=1}{i\neq k}}^n J_1^{(i)} & = \int_{\partial E_a} \big(1-a_k\theta_k^2\big)\;\mu(d\theta)=J_0-J_1^{(k)} \\
 \sum_{\overset{i=1}{i\neq k}}^na_i a_k\int_{\partial E_a}\theta_k^2\theta_i^2\mu(d\theta) & =
 \int_{\partial E_a}a_k\theta_k^2\big(1-a_k\theta_k^2\big)\mu(d\theta)=J_1^{(k)}-J_2^{(k)},
 \end{align*}
then
\begin{align*}
& \frac{\Ds U(x)}{\Gamma(1+s)k_{n,s}} \ = \\ 
& =\sum_{\overset{i=1}{i\neq k}}^n\big[J_0+4sJ_1^{(i)}+2s(s-1)J_2^{(i)}\big]a_ix_i^2-s\big(J_0-J_1^{(k)}\big) u_1(x)
+2s(s-1)\sum_{\overset{i=1}{i\neq k}}^na_i\sum_{\overset{h=1}{h\neq i}}^n a_h^2x_h^2\int_{\partial E_a}\theta_h^2\theta_i^2\mu(d\theta)  \\
& =s\Big[J_0-J_1^{(k)}+2(s-1)\big(J_1^{(k)}-J_2^{(k)}\big)\Big]a_k x_k^2-s\big(J_0-J_1^{(k)}\big) + \\
& \qquad +\sum_{\overset{i=1}{i\neq k}}^n\Big[(s+1)J_0+4sJ_1^{(i)}-sJ_1^{(k)}+2s(s-1)J_2^{(i)}
+2s(s-1)\sum_{\overset{h=1}{h\neq k,i}}^n a_ia_h\int_{\partial E_a}\theta_h^2\theta_i^2\mu(d\theta)\Big]a_ix_i^2.
\end{align*}
\end{proof}

\begin{remark}\label{rmk:case of interest}
Consider~$a_1=\ldots=a_{n-1}=1$ and~$a_n=a.$ In this particular case one has
\begin{align*}
& \sum_{i=2}^n \Big[a_i\int_{\partial E_a}\theta_i^2\theta_1^2\;\mu(d\theta)\Big]a_ix_i^2
=\sum_{i=2}^{n-1} \Big[\int_{\partial E_a}\theta_i^2\theta_1^2\;\mu(d\theta)\Big]x_i^2
+\Big[a\int_{\partial E_a}\theta_n^2\theta_1^2\;\mu(d\theta)\Big]ax_n^2 \\
& =\ \frac1{n-2}\Big[\sum_{i=2}^{n-1}\int_{\partial E_a}\theta_i^2\theta_1^2\;\mu(d\theta)\Big]\sum_{i=2}^{n-1} x_i^2
+\Big[a\int_{\partial E_a}\theta_n^2\theta_1^2\;\mu(d\theta)\Big]ax_n^2 \\
& =\ \frac1{n-2}\Big[J_1^{(1)}-J_2^{(1)}-a\int_{\partial E_a}\theta_n^2\theta_1^2\;\mu(d\theta)\Big]\sum_{i=2}^{n-1} x_i^2
+\Big[a\int_{\partial E_a}\theta_n^2\theta_1^2\;\mu(d\theta)\Big]ax_n^2 
\end{align*}
and therefore
\begin{align*}
\frac{\Ds\Big(\big(1-x_1\big)u_{s}(x)\Big)}{k_{n,s}\,\Gamma(1+s)} & = J_0-\big[J_0+2sJ_1^{(1)}\big]x_1, \\
\frac{\Ds\Big(\big(1-x_1\big)^2u_{s}(x)\Big)}{k_{n,s}\,\Gamma(1+s)} & = 
\big[J_0+5sJ_1^{(1)}+2s(s-1)J_2^{(1)}\big]x_1^2 -2\big[J_0+2sJ_1^{(1)}\big]x_1 +J_0-sJ_1^{(1)} \\
& \quad +s\sum_{i=2}^{n-1}\Big[J_1^{(1)}+2(s-1)\int_{\partial E_a}\theta_i^2\theta_1^2\;\mu(d\theta)\Big] x_i^2 \\
& \quad +s\Big[J_1^{(1)}+2(s-1)a\int_{\partial E_a}\theta_n^2\theta_1^2\;\mu(d\theta)\Big]ax_n^2, \\
\dfrac{\Ds\Big(u_{s}(x)\big(\sum\limits_{i=2}^{n-1} x_i^2+ax_n^2\big)\Big)}{k_{n,s}\,\Gamma(1+s)} & =
s\Big[J_0-J_1^{(1)}+2(s-1)\big(J_1^{(1)}-J_2^{(1)}\big)\Big]x_1^2-s\big(J_0-J_1^{(1)}\big) + \\
& \quad +\sum_{i=2}^{n-1}\Big[(s+1)J_0+4sJ_1^{(i)}-sJ_1^{(1)}+2s(s-1)J_2^{(i)} \\
& \qquad\qquad +2s(s-1)\sum_{\overset{h=2}{h\neq i}}^{n-1} \int_{\partial E_a}\theta_h^2\theta_i^2\mu(d\theta)
+2s(s-1)a\int_{\partial E_a}\theta_n^2\theta_1^2\mu(d\theta)\Big]x_i^2 \\
& \quad +\Big[(s+1)J_0+4sJ_1^{(n)}-sJ_1^{(1)} +2s(s-1)J_1^{(n)} -2s(s-1) a\int_{\partial E_a}\theta_n^2\theta_1^2\mu(d\theta)\Big]ax_n^2.
\end{align*}
Note also that, in this case,
\begin{align*}
\frac{\Ds\Big(\big(1-a^{1/2}x_n\big)u_{s}(x)\Big)}{k_{n,s}\,\Gamma(1+s)} & = J_0-\big(J_0+2sJ_1^{(n)}\big)a^{1/2}x_n \\
\frac{\Ds\Big(\big(1-a^{1/2}x_n\big)^2u_{s}(x)\Big)}{k_{n,s}\,\Gamma(1+s)} & = 
\big[J_0+5sJ_1^{(n)}+2s(s-1)J_2^{(n)}\big]ax_n^2 -2\big[J_0+2sJ_1^{(n)}\big]a^{1/2}x_n +J_0-sJ_1^{(n)} \\
& \quad +s\Big[J_1^{(n)}+\frac{2(s-1)}{n-1}\big(J_1^{(n)}-J_2^{(n)}\big)\Big]\sum_{i=1}^{n-1} x_i^2.
\end{align*}
For the sake of clarity we summarize the above in the following table for the particular case~$n=2$.
\begin{table}[ht]
\centering
\begin{tabular}{c||l}
$\ p(x)\ $ &~$\displaystyle \Gamma(1+s)^{-1}k_{n,s}^{-1}\Ds(pu_{s})(x)\ $ \ for \ $x\in E_a$ \\ [1em]
\hline\hline \\ [-.5em]
$1-x_1$ &~$J_0-\big[J_0+2sJ_1\big]x_1$ \\ [1em] \hline \\ [-.5em]
$(1-x_1)^2$ &~$\big[J_0+5sJ_1+2s(s-1)J_2\big]x_1^2 -2\big[J_0+2sJ_1\big]x_1 +J_0-sJ_1$ \\ [.5em]
&~$\displaystyle\quad +s\big[J_1+2(s-1)\big(J_1-J_2\big)\big]ax_2^2$ \\ [1em] \hline \\ [-.5em]
$ax_2^2$ &~$\displaystyle s\Big[J_0-J_1+2(s-1)\big(J_1-J_2\big)\Big]x_1^2-s\big(J_0-J_1\big)$  \\  [.5em] 
&~$\displaystyle \quad +\Big[(2s+1)(s+1)J_0-s(4s+1)J_1 +2s(s-1)J_2\Big]ax_2^2~$ \\ [1em] 
\end{tabular}
\caption{The particular case~$n=2,\ a_1=1,\ a_2=a$, where we simply write~$J_i$ for~$J_i^{(1)}$.}\label{table:some more examples}
\end{table}

\end{remark}

\section{Counterexample to positivity preserving properties in ellipsoids}\label{sec:ce}

In the following, we give a counterexample to the positivity preserving property (see~\eqref{ppp}) of~$\Ds$,~$s>1$, in an ellipsoid~$E_a$, where we choose~$a_1=\ldots=a_{n-1}=1$, and~$a_n=a>1$ sufficiently large. To this end, we consider 
\begin{align}\label{U}
U(x):=p(x)u_{s}(x),
\qquad x\in\R^n,
\end{align}
where~$p$ is a polynomial of degree two such that~$p-\epsilon$ is sign-changing for every~$\epsilon>0$. Note that once we have shown that there is a constant~$k>0$ such that
\begin{equation}\label{goal-counter}
\Ds U\geq k\qquad\text{in~$E_a$,}
\end{equation}
it follows, by linearity, that for a suitable~$\epsilon>0$ small the function~$U_{\epsilon}:=(p-\epsilon)u_{s}$ has a nonnegative fractional Laplacian while the function itself is sign-changing in~$E_a$.

We begin with a heuristic explanation of the strategy.
We choose~$p(x)=p_2(x_1)+\gamma p_1(x_1)-\delta q(x)$ for constants~$\gamma,\delta\geq0$ to be fixed later and where
\[
p_2(x_1)=(1-x_1)^2,\quad p_1(x_1)=1-x_1,\quad\text{and}\quad q(x)=\sum_{k=2}^{n-1}x_k^2+ax_n^2,
\qquad x\in\R^n.
\]
From Lemmas~\ref{polynomial-of-degree-one},~\ref{polynomial-of-degree-two}, and~\ref{polynomial-of-degree-twob} it follows that
\begin{align*}
\Ds(p_2u_{s})&=P_2(x_1)+R_2(x_2,\ldots,x_n)  & \text{for some degree~$2$ polynomials~$P_2$ and~$R_2$, }\\
\Ds(p_1u_{s})&=P_1(x_1), & \text{for some degree~$1$ polynomial~$P_1$, }\\
\Ds(qu_{s})&=Q(x_1)+R_0(x_2,\ldots,x_n)  & \text{for some degree~$2$ polynomials~$Q$ and~$R_0$. }
\end{align*}
To achieve~\eqref{goal-counter} we then need, in particular, that~$\delta$ satisfies
\begin{equation}\label{choice-of-delta}
R_2-\delta R_0\geq 0, \qquad \text{in }E_a.
\end{equation}
The choice of~$\gamma$ is far more delicate, but from a geometric point of view it can be made intuitively optimal:
indeed, in the worst case scenario, the polynomial~$P_2(x_1)$ has two real roots~$P_{2,-}< P_{2,+}<1$, while~$P_1(x_1)$ always has one~$P_{1,+}$. 
In this case, it holds that~$P_{2,+}$ and~$P_{1,+}$  are both of the order
\begin{align*}
1-P_{2,+}=O\Big(\frac1a\Big)=1-P_{1,+}, 
\qquad \text{as }a\uparrow\infty.
\end{align*}
But then, if we aim at having~$P_2(x_1)+\gamma P_1(x_1)>0$ in~$(-1,1)$,
it is enough to verify (see Figure~\ref{pic polynomials})
\begin{equation}\label{main-condition}
P_{2,+}< P_{1,+}
\end{equation}
and consequently choose 
\begin{equation}\label{choice-of-gamma}
\gamma=-\frac{P_2'(P_{2,+})}{P_1'}.
\end{equation}
(noticing that the derivative of~$P_1$ is a negative constant): with this choice of~$\gamma$,
we will have~$P_2(x_1)\geq -\gamma P_1(x_1)$ in~$(-1,1)$ by convexity.

By taking~$\delta>0$ such that~\eqref{choice-of-delta} is satisfied, and replacing~$P_2(x_1)$ with~$P_2(x_1)+\delta Q(x_1)$, the range of possible choices of~$s$ so that~\eqref{main-condition} is satisfied can even be enlarged.
\medskip

The conditions that need to be verified in this argument and their compatibility (on top of an asymptotic analysis as~$a\uparrow\infty$) is basically the technical reason why the strategy stops working at finite~$s$: nevertheless we expect that increasing the degrees of the involved polynomials could give some more flexibility in the computations, resulting in a wider range for~$s$. 

\setlength{\unitlength}{\textwidth}
\begin{figure}[!hbt]
\begin{minipage}{\textwidth}
\begin{center}
\begin{picture}(.9,.4)
\put(0,0){\includegraphics[width=.9\textwidth]{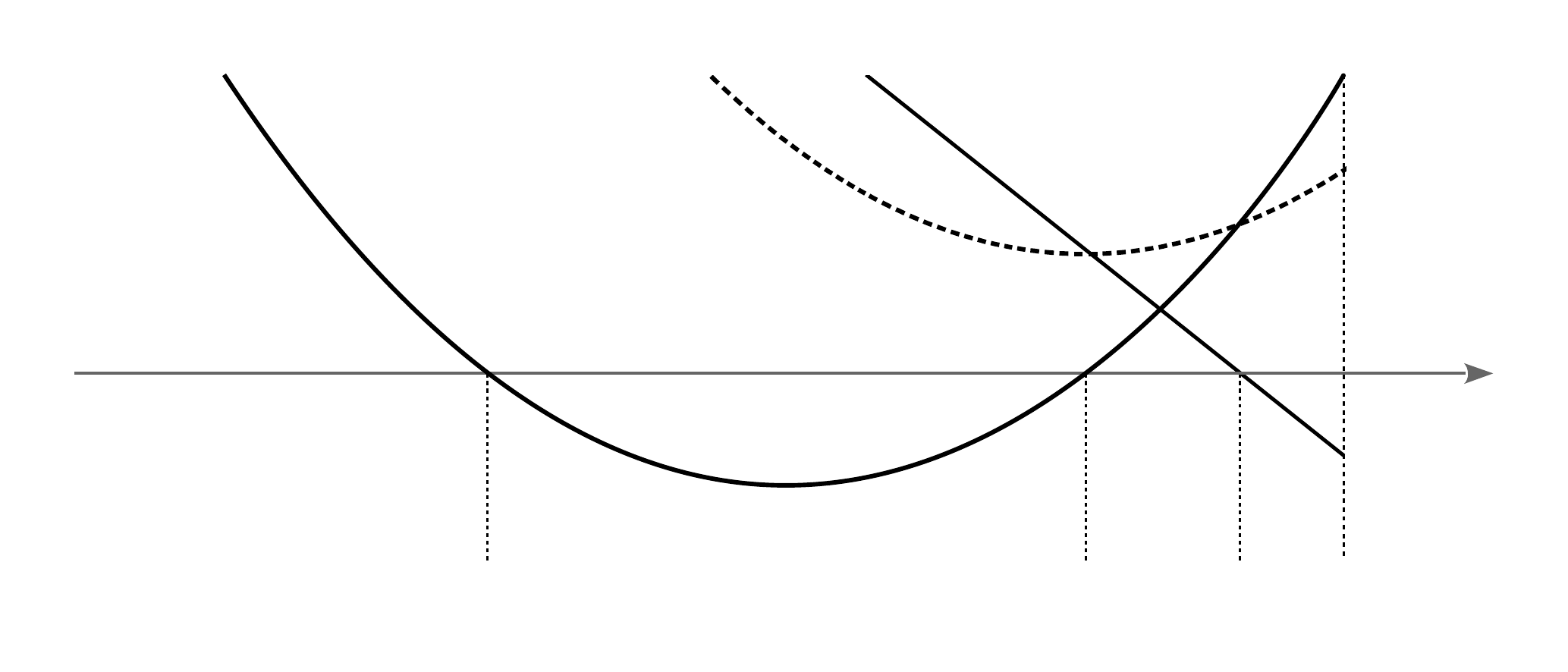}}
\put(.06,.35){$P_{2,\delta}(x_1)$}
\put(.25,.35){$P_{2,\delta}(x_1)+\gamma P_1(x_1)$}
\put(.47,.35){$\gamma P_1(x_1)$}
\put(.263,.045){$P_{2,-}$}
\put(.608,.045){$P_{2,+}$}
\put(.696,.045){$P_{1,+}$}
\put(.766,.045){$1$}
\put(.85,.15){$x_1$}
\end{picture}
\end{center}
\end{minipage}
\caption{A choice of~$\gamma$ that implies~$P_{2,\delta}+\gamma P_1>0$. }\label{pic polynomials}
\end{figure}

Theorem~\ref{thm:ce:ext} follows directly from the next result.
\begin{theorem}\label{counterexample1}
	Let
	\begin{align}\label{pg}
	 p(x):=(1-x_1)^2+\gamma(1-x_1)-\delta \Big(\sum\limits_{k=2}^{n-1}x_k^2+ax_n^2\Big).
	\end{align}
Then, for every~$s\in(1,\sqrt3+3/2)$, there are~$\gamma,\delta\geq 0$, and~$a_0>1$ such that the following holds: for every~$a\geq a_0$  there is~$K>0$ such that 
	\begin{align*}
	\Ds (p u_{s})(x)\geq K\,\frac{J_0}{a^2}\quad\text{ for all~$x\in E_a$.}
	\end{align*}
	In particular, for every~$a\geq a_0$ there is~$\epsilon>0$ such that the function~$U_{\epsilon}=(p-\epsilon)u_{s}\in\cH^s_0(E_a)$ satisfies
	\begin{align*}
	\Ds U_{\epsilon}(x)>0 \quad\text{ for all }x\in E_a.
	\end{align*}
\end{theorem}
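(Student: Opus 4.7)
The plan is to apply the explicit polynomial identities of Lemmas~\ref{lem:recurrence}, \ref{polynomial-of-degree-two} and~\ref{polynomial-of-degree-twob} (collected in Remark~\ref{rmk:case of interest}) to the decomposition $p = p_2 + \gamma p_1 - \delta q$ with $p_2(x_1) = (1-x_1)^2$, $p_1(x_1) = 1 - x_1$ and $q(x) = \sum_{k=2}^{n-1} x_k^2 + a x_n^2$. Because $A$ is block-symmetric with $a_1 = \ldots = a_{n-1} = 1$ and all mixed-monomial terms except those in $x_1$ vanish, the resulting expression separates as
\[
\Ds(p u_s)(x) = k_{n,s}\,\Gamma(1+s)\,\bigl[\mathcal A(x_1) + \mathcal B(x_2,\ldots,x_n)\bigr],
\]
where $\mathcal A$ is degree-two in $x_1$ alone and $\mathcal B$ is a quadratic form in the remaining coordinates, with coefficients that depend linearly on $\gamma, \delta$ and on the moment integrals $J_0, J_1^{(k)}, J_2^{(k)}$ and on the mixed moments $\int_{\partial E_a}\theta_i^2\theta_j^2\,\mu(d\theta)$. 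Thus it is enough to choose $\gamma, \delta$ (depending only on $s, n$) and a threshold $a_0$ so that $\mathcal B \geq 0$ on $\{\sum_{k\geq 2} x_k^2 + a x_n^2 < 1\}$ and $\mathcal A \geq K\,J_0/a^2$ on $(-1,1)$, uniformly in $a \geq a_0$.

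First, I would select $\delta = \delta(s,n) \geq 0$ to force $\mathcal B \geq 0$. The coefficients of $x_k^2$ for $2 \leq k \leq n-1$ and of $a x_n^2$ in $\mathcal B$, read directly from Lemmas~\ref{polynomial-of-degree-two}--\ref{polynomial-of-degree-twob}, yield a finite list of linear inequalities in $\delta$; substituting the asymptotic expansions of $J_0, J_1^{(k)}, J_2^{(k)}$ and of the mixed moments as $a\uparrow\infty$ provided by Lemma~\ref{asymptotic-ji} reduces them to an explicit $s$-dependent compatibility test, whose solution set for $\delta$ is nonempty on the whole range $s \in (1, \sqrt{3}+3/2)$. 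With $\delta$ fixed, the second step is to decompose $\mathcal A(x_1) = P_{2,\delta}(x_1) + \gamma P_1(x_1)$, where $P_{2,\delta} := P_2 - \delta Q$ is convex and $P_1$ is affine and decreasing. In the worst case $P_{2,\delta}$ has two real roots $P_{2,-} < P_{2,+} < 1$; as $a \uparrow \infty$, both $P_{2,+}$ and the unique zero $P_{1,+}$ of $P_1$ have the form $1 - O(1/a)$. Granted the decisive condition~\eqref{main-condition}, $P_{2,+} < P_{1,+}$, the choice $\gamma := -P_{2,\delta}'(P_{2,+})/P_1'$ makes $P_{2,\delta} + \gamma P_1$ tangent to zero at $P_{2,+}$ and strictly positive elsewhere in $(-1,1)$ by convexity of $P_{2,\delta}$ (cf.\ Figure~\ref{pic polynomials}). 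The scaling $1 - P_{2,+} = O(1/a)$ then supplies the quantitative bound $\mathcal A \geq K J_0/a^2$.

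The principal obstacle will be the asymptotic matching: expanding $P_{2,+}$ and $P_{1,+}$ to sub-leading order in $1/a$ via Lemma~\ref{asymptotic-ji}, and reducing~\eqref{main-condition} to an inequality in $s$ that must be compatible with the constraints on $\delta$ from step one. The role of $\delta$ is precisely to enlarge the admissible range of $s$ by pushing $P_{2,+}$ slightly inward, and the threshold $\sqrt{3}+3/2$ emerges as the largest value of $s$ for which the two inequalities are simultaneously solvable in $(\gamma,\delta) \in [0,\infty)^2$ for all $a$ large enough; for larger $s$ the polynomial $P_{2,\delta}$ cannot be made to stay below $\gamma|P_1|$ on $(P_{2,+}, P_{1,+})$, no matter the choice of $\delta$.

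Finally, the statement about $U_\epsilon = (p-\epsilon)u_s$ follows by linearity: invoking Theorem~\ref{torsion:thm}, $\Ds u_s = \kappa$ in $E_a$, so
\[
\Ds U_\epsilon = \Ds(p u_s) - \epsilon\,\kappa \geq K J_0 a^{-2} - \epsilon\,\kappa > 0 \quad\text{in } E_a
\]
for $\epsilon \in (0, K J_0 a^{-2}/\kappa)$, and $p - \epsilon$ is sign-changing in $E_a$ because $p(0) = 1 + \gamma > 0$ while $p(1,0,\ldots,0) = 0$ forces $p - \epsilon < 0$ at interior points of $E_a$ close to $(1, 0, \ldots, 0)$. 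Membership $U_\epsilon \in \cH^s_0(E_a)$ follows directly from that of $u_s$ proved in Theorem~\ref{torsion:thm}, since $p - \epsilon$ is a polynomial.
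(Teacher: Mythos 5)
Your proposal follows essentially the same route as the paper's proof: the same decomposition $p=p_2+\gamma p_1-\delta q$, the same use of Lemmas~\ref{polynomial-of-degree-one}--\ref{polynomial-of-degree-twob} together with the asymptotics of Lemma~\ref{asymptotic-ji}, the choice of $\delta$ (of size comparable to $1/a$) making the transverse part $Q_\delta\geq 0$, the root condition $P_{2,+}<P_{1,+}$ with $\gamma=-P_{2,\delta}'(P_{2,+})/P_1'$, and the same compatibility analysis in $\delta$ that produces the threshold $\sqrt3+3/2$. One small slip worth noting: with this choice of $\gamma$ the sum $P_{2,\delta}+\gamma P_1$ is \emph{not} tangent to zero at $P_{2,+}$ (its value there is $\gamma P_1(P_{2,+})>0$); the uniform lower bound of order $J_0/a^2$ comes precisely from the strict gap $P_{1,+}-P_{2,+}$, exactly as in the paper.
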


\begin{proof}[Proof of Theorem~\ref{counterexample1}]
	In the following, we perform an asymptotic analysis letting~$a\uparrow\infty$. To this end, let us first recall~\eqref{J0} and~\eqref{Ji}.
	By Lemma~\ref{asymptotic-ji}, we have
		\begin{equation}\label{counterexample1:asymptotic1}
		j_1:=\lim\limits_{a\uparrow\infty} \frac{aJ_1^{(1)}}{J_0}=\frac{1}{2s-1}\quad\text{and}\quad j_2:=\lim\limits_{a\uparrow\infty} \frac{a^2J_2^{(1)}}{J_0}=\left\{\begin{aligned} &+\infty, &&\text{if}\  s\in\Big(1,\frac{3}{2}\Big];\\
		&\frac{3}{(2s-1)(2s-3)}=\frac{3j_1}{2s-3}, &&\text{if}\  s>\frac{3}{2}.
		\end{aligned}\right.
		\end{equation}
		Moreover,~$\lim\limits_{a\uparrow\infty} \frac{aJ_2^{(1)}}{J_0}=\lim\limits_{a\uparrow\infty} \frac{J_2^{(1)}}{J_1^{(1)}}=0$ for all~$s>1$. Let
		\begin{align}
        \begin{aligned}\label{ABC}
		A&:=(1-s\delta)J_0+s(5-\delta(2s-3))J_1^{(1)}+2s(s-1)(1+\delta)J_2^{(1)},\\
		B&:=J_0+2sJ_1^{(1)},\quad\text{and}\quad  C:=(1+\delta s)J_0-s(1+\delta)J_1^{(1)}.
		\end{aligned}
		\end{align}
		We denote by
		\begin{align}
		P_1(x_1)&=J_0-Bx_1,\notag\\
		P_{2,\delta}(x_1)&=Ax_1^2-2Bx_1+C,\notag \\
		Q_{\delta}(x_2,\ldots,x_n)&=s\sum_{i=2}^{n-1}\Big[J_1^{(1)}+2(s-1)\int_{\partial E_a}\theta_i^2\theta_1^2\;\mu(d\theta)\Big] x_i^2 
		+s\Big[J_1^{(1)}+2(s-1)a\int_{\partial E_a}\theta_n^2\theta_1^2\;\mu(d\theta)\Big]ax_n^2 \notag\\
&\quad -\delta \sum_{i=2}^{n-1}\Big[(s+1)J_0+4sJ_1^{(i)}-sJ_1^{(1)}+2s(s-1)J_2^{(i)}\notag \\
& \qquad\qquad +2s(s-1)\sum_{\overset{h=2}{h\neq i}}^{n-1} \int_{\partial E_a}\theta_h^2\theta_i^2\mu(d\theta)
+2s(s-1)a\int_{\partial E_a}\theta_n^2\theta_1^2\mu(d\theta)\Big]x_i^2\notag \\
& \quad -\delta\Big[(s+1)J_0+4sJ_1^{(n)}-sJ_1^{(1)} +2s(s-1)J_1^{(n)} -2s(s-1) a\int_{\partial E_a}\theta_n^2\theta_1^2\mu(d\theta)\Big]ax_n^2,\label{Qdelta}
		\end{align}
		so that, for~$x\in E_a$, we have
		\[
		\frac{\Ds (pU)(x)}{\Gamma(1+s)k_{n,s}}=P_{2,\delta}(x_1)+\gamma P_1(x_1) + Q_{\delta}(x_2,\ldots,x_n),
		\qquad x\in E_a.
		\]
		We first note that the discriminant of~$P_{2,\delta}$ is given by
		\begin{align*}
		&B^2-AC=J_0^2+4sJ_0J_1^{(1)}+4s^2(J_1^{(1)})^2\\
		&\qquad -\big((1-s\delta)J_0+s(5-\delta(2s-3))J_1^{(1)}+2s(s-1)(1+\delta)J_2^{(1)}\big)(J_0(1+s\delta)-s(1+\delta)J_1^{(1)})\\
		&=J_0^2+4sJ_0J_1^{(1)}+4s^2(J_1^{(1)})^2 -(1-s^2\delta^2)J_0^2-s(1+s\delta)(5-\delta(2s-3))J_0J_1^{(1)}-2s(s-1)(1+\delta)(1+s\delta)J_0J_2^{(1)}\\
		&\qquad +s(1+\delta)(1-s\delta)J_0J_1^{(1)}+s^2(5-\delta(2s-3))(1+\delta)(J_1^{(1)})^2+2s^2(s-1)(1+\delta)^2J_1^{(1)}J_2^{(1)}\\
		&=s^2\delta^2 J_0^2+s\Big(4 -(1+s\delta)(5-\delta(2s-3))+(1+\delta)(1-s\delta)\Big)J_0J_1^{(1)}+s^2\Big(4+(5-\delta(2s-3))(1+\delta)\Big)(J_1^{(1)})^2\\
		&\qquad  -2s(s-1)(1+\delta)(1+s\delta)J_0J_2^{(1)}+2s^2(s-1)(1+\delta)^2J_1^{(1)}J_2^{(1)}\\
		&=s^2\delta^2 J_0^2-2s\delta\Big(2s+1+s(2-s)\delta\Big)J_0J_1^{(1)}+s^2\Big(9+(8-2s)\delta-(2s-3)\delta^2\Big)(J_1^{(1)})^2\\
		&\qquad  -2s(s-1)(1+\delta)(1+s\delta)J_0J_2^{(1)}+2s^2(s-1)(1+\delta)^2J_1^{(1)}J_2^{(1)}.
		\end{align*}
		If~$s\in(1,3/2]$ and~$\delta=0$, then
		\[
		\frac{a^2(B^2-AC)}{J_0^2}= 9s^2\Big(\frac{aJ_1^{(1)}}{J_0}\Big)^2  -2s(s-1)\frac{a^2J_2^{(1)}}{J_0}+2s^2(s-1)\frac{aJ_1^{(1)}}{J_0}\frac{aJ_2^{(1)}}{J_0}\downarrow-\infty,
		\qquad \text{as }a\uparrow\infty,
		\]
		so that there is~$a_0>0$ such that~$P_{2,0}$ is positive for all~$a\geq a_0$. On the other hand, if~$s\in(3/2,2)$ and~$\delta=0$, then, using~\eqref{counterexample1:asymptotic1},
		\begin{multline*}
		\frac{a^2(B^2-AC)}{J_0^2} = 9s^2\Big(\frac{aJ_1^{(1)}}{J_0}\Big)^2-2s(s-1)\frac{a^2J_2^{(1)}}{J_0}+2s^2(s-1)\frac{aJ_1^{(1)}}{J_0}\frac{aJ_2^{(1)}}{J_0} \\
		\longrightarrow 9s^2j_1^2-\frac{6s(s-1)}{2s-3}j_1=3sj_1\Big(\frac{3s}{2s-1}-\frac{2(s-1)}{2s-3}\Big) 
		=\frac{3 s j_1}{(2s-1)(2s-3)}(s-2)(2s+1)<0 \qquad \text{as }a\uparrow\infty.
		\end{multline*}
		The claim in the case~$s\in(1,2)$ hence follows by choosing~$\delta=\gamma=0$, noting that $Q_0\geq0$ since it is the sum of nonnegative terms.
				
		In the following we assume~$s\geq 2$. Moreover, we assume that~$\delta$ is such that
		\begin{equation}\label{small-condition}
		A=(1-s\delta)J_0+s(5-\delta(2s-3))J_1^{(1)}+2s(s-1)(1+\delta)J_2^{(1)}>0:
		\end{equation}
		this is asymptotically satisfied as~$a\uparrow\infty$ if~$s\delta<1$.
		
		For the positivity of~$Q_\delta$ first note that, by symmetry,~$J_i^{(k)}=J_i^{(1)}$ for~$k\in\{1,\ldots,n-1\}$ and~$i\in\N$; furthermore,
		\begin{align}\label{31287}
		J_1^{(n)}  = J_0-(n-1)J_1^{(1)},
		\quad \int_{\partial E_a}\theta_1^2\theta_k^2\,\mu(d\theta) = \frac{1}{3}J_2^{(1)},\quad\text{and}\quad a\int_{\partial E_a}\theta_n^2\theta_1^2\,\mu(d\theta)  = J_1^{(1)}-\frac{n+1}{3} J_2^{(1)},
		\end{align}
		where the last two identities follow from Lemma~\ref{identities-ik} and the first identity is a consequence of the definition of~$J_1^{(n)}$ and of~$E_a$. Hence, again by symmetry, the fact that $\theta_2^2+\ldots+\theta_{n-1}^2+a\theta_n^2=1-\theta_1^2$ for $\theta\in \partial E_a$, and~\eqref{31287}, 
\begin{align*}
Q_{\delta}(x_2,\ldots,x_n)= & 
\sum_{i=2}^{n-1}x_i^2
\Bigg[
sJ_1^{(1)}+2s(s-1)\int_{\partial E_a}\theta_2^2\theta_1^2\;\mu(d\theta)-\delta\Big[(s+1)J_0+3sJ_1^{(1)}+2s(s-1)J_2^{(1)} \\
& \qquad\qquad +2s(s-1)  \int_{\partial E_a}\Big( \theta_2^2+\ldots+\theta_{n-1}^2+a\theta_n^2 \Big)\theta_i^2\;\mu(d\theta)-2s(s-1)J_2^{(i)}\Big]
\Bigg]\\
&+ax_n^2\Bigg[sJ_1^{(1)}+2s(s-1)a\int_{\partial E_a}\theta_n^2\theta_1^2\;\mu(d\theta)\\
&\qquad\qquad-\delta\Big((s+1)J_0+4sJ_1^{(n)}-sJ_1^{(1)} +2s(s-1)J_1^{(n)} -2s(s-1) a\int_{\partial E_a}\theta_n^2\theta_1^2\;\mu(d\theta)\Big)\Bigg]\\
=& \sum_{i=2}^{n-1}x_i^2\Bigg[sJ_1^{(1)}+\frac{2}{3}s(s-1)J_{2}^{(1)}-\delta\Big((s+1)J_0+3sJ_1^{(1)}+2s(s-1)  \int_{\partial E_a}(1-\theta_1^2)\theta_2^2\;\mu(d\theta) \Big)\Bigg]\\
&+ax_n^2\Bigg[sJ_1^{(1)}+2s(s-1) \Big(J_1^{(1)}-\frac{n+1}{3} J_2^{(1)}\Big)\\
&\qquad\qquad-\delta\Big((s+1)J_0+2s(s+1)J_1^{(n)}-sJ_1^{(1)} -2s(s-1)(J_1^{(1)}-\frac{n+1}{3} J_2^{(1)})\Big)\Bigg]\\
=& \sum_{i=2}^{n-1}x_i^2\Bigg[sJ_1^{(1)}+\frac{2}{3}s(s-1)J_{2}^{(1)}-\delta\Big((s+1)J_0+s(2s+1)J_1^{(1)}-\frac{2}{3}s(s-1)J_{2}^{(1)}\Big)\Bigg]\\
&+ax_n^2\Bigg[s(2s-1)J_1^{(1)}-2s(s-1)\frac{n+1}{3} J_2^{(1)} \\
&\qquad\qquad-\delta\Big((2s+1)(s+1)J_0-s\big[2(s+1)(n-1)+2s-1\big]J_1^{(1)}+2s(s-1)\frac{n+1}{3} J_2^{(1)}\Big)\Bigg].
\end{align*}
This combined with the asymptotic estimates in Lemma~\ref{asymptotic-ji} gives~$Q_{\delta}\geq0$ for~$a$ sufficiently large, if
\begin{align*}
sJ_1^{(1)}-\delta(s+1)J_0>0\quad\text{and}\quad s(2s-1)J_1^{(1)}-\delta(s+1)(2s+1)J_0 > 0.
\end{align*}
Note that the second inequality implies the first and in view of the last inequality, we choose
\begin{align}\label{deltafirst}
\delta=O\Big(\frac1a\Big)
\qquad\text{and}\qquad
\delta<\frac1a\lim_{a\uparrow\infty}\frac{s(2s-1)\, aJ_1^{(1)}}{(s+1)(2s+1)J_0}=\frac{s}{(s+1)(2s+1)}\frac1a;
\end{align}
remark how this choice for~$\delta$ also fulfills~\eqref{small-condition} for~$a$ large.

		Note that, in view of~\eqref{small-condition}, the largest root of~$P_{2,\delta}$ is given by
		\begin{equation}\label{right-root-p2}
		P_{2,+}:=\frac{B+\sqrt{B^2-AC}}{A},
		\end{equation}
		provided\footnote{If this is not the case, then~$P_{2,\delta}$ is positive and it is sufficient to take~$\gamma=0$.}~$B^2\geq AC$.
		We remark that\footnote{We use the asymptotic behaviors stated in~\eqref{small-ji}, on top of identities~\eqref{ik} and~\eqref{in}: mind that all this relies on the restriction~$s>3/2$.}
		\begin{align*}
		B^2-AC & = s^2\delta^2 J_0^2-2s\delta(2s+1)J_0J_1^{(1)}+9s^2(J_1^{(1)})^2-2s(s-1)J_0J_2^{(1)} \\
		& = J_0^2\Big(s^2\delta^2-\frac{2s(2s+1)}{2s-1}\frac\delta{a}+\frac{9s^2}{(2s-1)^2}\frac1{a^2}-\frac{6s(s-1)}{(2s-1)(2s-3)}\frac1{a^2}\Big)+o\Big(\frac{J_0^2}{a^2}\Big),
		\qquad \text{as }a\uparrow\infty.
		\end{align*}
		The root of~$P_1$ is given by
		\begin{equation}\label{root-p1}
		P_{1,+}:=\frac{J_0}{B}.
		\end{equation}
		As explained above, with~$\gamma$ as in~\eqref{choice-of-gamma} we have~$P_{2,\delta}+\gamma P_1>0$ in~$[-1,1]$, if (and only if) we can find~$\delta$ such that
		\begin{align}\label{plp}
		P_{2,+}<P_{1,+},
		\end{align}
where the strict inequality is needed due to the asymptotic analysis. This inequality is moreover equivalent to
		\begin{align*}
		B^2+B\sqrt{B^2-AC}&<J_0A.
		\end{align*}
		Asymptotically, this is satisfied if and only if 
		\begin{align*}
		1+\frac{4s}{2s-1}\frac1a + \sqrt{s^2\delta^2-\frac{2s(2s+1)}{2s-1}\frac\delta{a}+\frac{9s^2}{(2s-1)^2}\frac1{a^2}-\frac{6s(s-1)}{(2s-1)(2s-3)}\frac1{a^2}} < 1-s\delta+\frac{5s}{2s-1}\frac1a,
		\end{align*}
		which is equivalent to
		\begin{align*}
		s^2\delta^2-\frac{2s(2s+1)}{2s-1}\frac\delta{a}+\frac{9s^2}{(2s-1)^2}\frac1{a^2}-\frac{6s(s-1)}{(2s-1)(2s-3)}\frac1{a^2} < \Big(-s\delta+\frac{s}{2s-1}\frac1a\Big)^2
		\qquad \text{for }\delta<\frac1{2s-1}\frac1a,
		\end{align*}
		\textit{i.e.},
		\begin{align}\label{deltasecond}
		\delta>-\frac1{s+1}\Big(\frac{3(s-1)}{2s-3}-\frac{4s}{2s-1}\Big)\frac1{a}
		\qquad \text{for }\delta<\frac1{2s-1}\frac1a.
		\end{align}
		As the condition~$a\delta<1/(2s-1)$ is already implied by~\eqref{deltafirst},
		we are left to verify what values of~$s$ allow for a non-empty range of~$\delta$ 
		as resulting from~\eqref{deltafirst} and~\eqref{deltasecond}: these are those values that satisfy
		\begin{align*}
		-\frac{3(s-1)}{2s-3}+\frac{4s}{2s-1}<\frac{s}{2s+1},
		\end{align*}
		which in particular holds for~$s\in[2,\sqrt3+3/2)$. 
\end{proof}

\begin{proof}[Proof of Theorem~\ref{ce:intro:thm}]
This follows directly from the first part of the proof of Theorem~\ref{counterexample1}. 
\end{proof}

\subsection{A computer-assisted analysis in two dimensions}\label{ca:sec}

Theorem~\ref{counterexample1} shows that the fractional Laplacian~$(-\Delta)^s$ does not satisfy a positivity preserving property in the ellipse~$E_a$ for~$a$ large enough. Its proof uses an asymptotic analysis as~$a\uparrow\infty$ and constructs an explicit counterexample for any~$a$ sufficiently large ($a>a_0$ for some~$a_0>1$) 
and for~$s\in(1,s_0)$ with~$s_0:=\sqrt3+3/2\approx 3.232$.  In this section we fix~$n=2$ and address the following questions:
\begin{enumerate}
 \item [$i)$] How large is~$a_0$?
 \item [$ii)$] What can be said for~$s\geq s_0$?
\end{enumerate}

The answer to these questions depends on the explicit calculations developed in Section~\ref{sec2}, which involve several hypergeometric functions.  These functions can be expressed as a series~\eqref{s} or as an integral~\eqref{i}. However, direct calculations using these representations are usually hard to perform; nevertheless, computers are very efficient and precise manipulating and approximating the values of hypergeometric functions, and we use this to answer questions~$i)$ and~$ii)$. 

\subsubsection{The behaviour of \texorpdfstring{$a_0$}{a0} in the simplest case}\label{g:r}
Let 
\begin{align}\label{p}
p(x):=(1-x_1)^2,\qquad x\in\R^2 
\end{align}
then the value of~$\Ds (p u_{s})$ in~$E_a$ can be computed explicitly in terms of hypergeometric functions, see Table~\ref{table:some more examples}. In particular, 
\begin{align}\label{g}
\Ds (p u_{s})>0\quad \text{ in~$E_a$ \qquad if } B^2-AC<0,
\end{align}
where~$A$,~$B$, and~$C$ are given in~\eqref{ABC}.  In Figure~\ref{f2} we present a plot of the nodal regions of~$D(a,s):=B^2-AC$ (note that~$A$,~$B$, and~$C$ are all explicit functions of~$a$ and~$s$). 

\begin{center}
\setlength{\unitlength}{1cm}
\includegraphics[width=.9\textwidth]{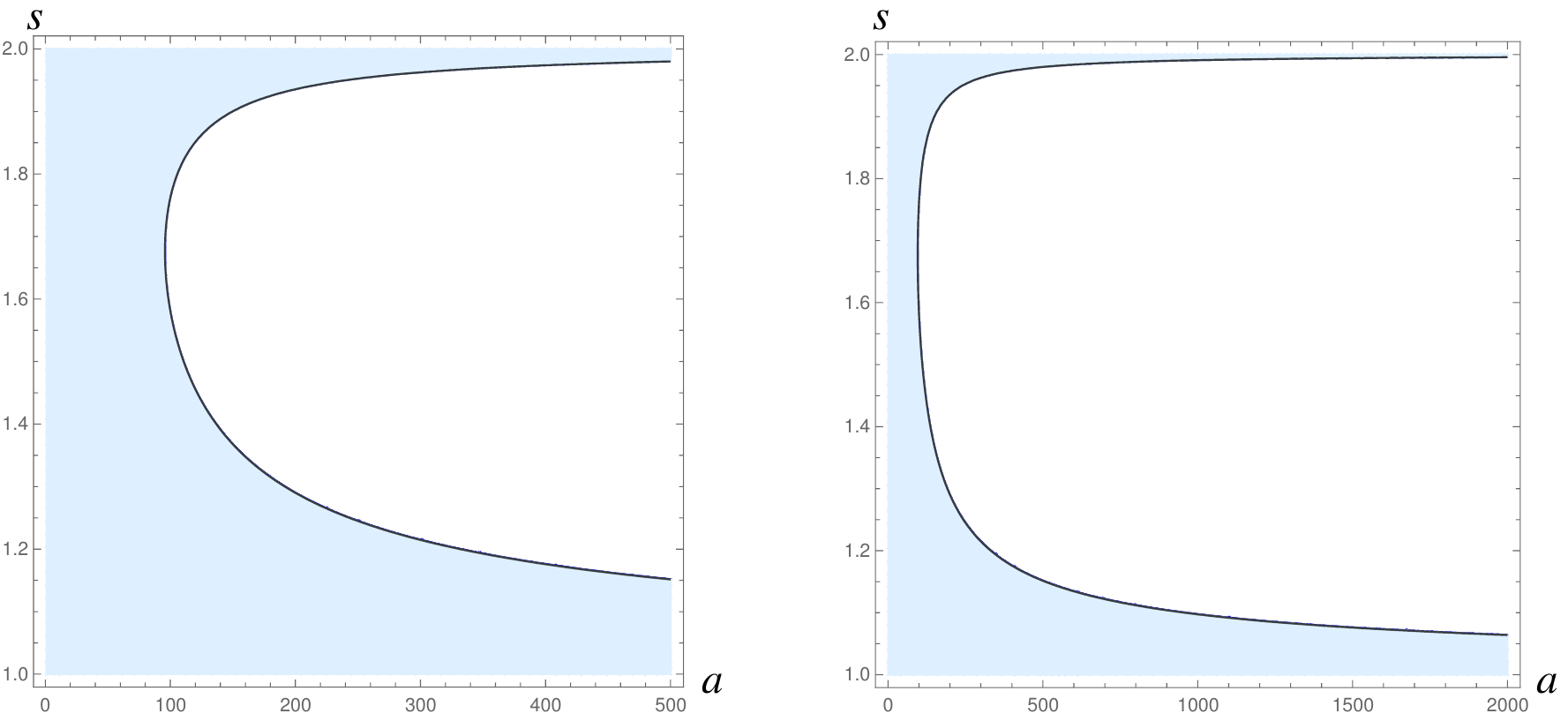}
\captionof{figure}{The nodal regions of~$D(a,s):=B^2-AC$ for~$s\in(1,2)$ with~$a\in(1,500)$ (left) and~$a\in(1,2000)$ (right). For~$(s,a)$ in the white region one can construct a counterexample to positivity preserving properties for~$(-\Delta)^s$ in the ellipse~$E_a$ with axes~$1$ and~$\frac{1}{\sqrt{a}}$.}\label{f2}
\end{center}

In particular, Figure~\ref{f2} shows that~\eqref{g} holds for all~$s\in(1,2)$ and~$a>a_0$ for some~$a_0>0$, as stated in Theorem~\ref{counterexample1}, however~$a_0\uparrow \infty$ as~$s\downarrow 1$, whereas for~$s=3/2$ we have~$a_0<115$.  Note that, if~$s\uparrow 2$, then we also have that~$a_0\uparrow \infty$ whenever~$p$ has the simple form~\eqref{p}; but, by using a more general polynomial~$p$ as in~\eqref{pg} for suitable~$\delta$ and~$\gamma$, one can obtain a counterexample for~$s$ larger.

\subsubsection{Extended range for counterexamples}

If~$s\geq s_0$, then the asymptotic analysis in the proof of Theorem~\ref{counterexample1} cannot be successfully implemented.  However, one can show that a counterexample can be obtained for some~$s\geq s_0$ if~$a$ is \textit{not} very large.

To be more precise, let~$\gamma$ be as in~\eqref{choice-of-gamma} and let
\begin{align*}
 \delta &= \frac{s \big(J_1^{(1)}+2 (s-1) (J_1^{(1)}-J_2^{(1)})\big)}{(s+1) J_0-s J_1^{(1)}-2s(s-1) 
   (J_1^{(1)}-J_2^{(1)})+2s(s+1)J_1^{(2)}
   } \\
   &=\frac{s (a-1) \big(\, _2F_1(\frac{1}{2},s+1;1;\frac{a-1}{a})-\,
   _2F_1(\frac{1}{2},s+1;2;\frac{a-1}{a})\big)}{
   _2F_1\left(\frac{1}{2},s+1;1;\frac{a-1}{a}\right)+\big((a-1) s+a-2\big) \,
   _2F_1(\frac{3}{2},s+1;1;\frac{a-1}{a})}. 
\end{align*}
This choice of~$\delta$ is such that~$Q_\delta\equiv 0$ (see~\eqref{Qdelta} and use~\eqref{explicit-ji} and~\eqref{in}).

Let~$P_{1,+}$ and~$P_{2,+}$ be as in~\eqref{right-root-p2} and~\eqref{root-p1}.  Then a counterexample can be successfully constructed if~$P_{1,+}>P_{2,+}$, see~\eqref{plp}. Let 
\begin{align*}
 h(a,s):=P_{1,+}-P_{2,+}.
\end{align*}
Then we can compute numerically that~$h(11,s)>0$ for~$s\in [3,3.8456)$, see Figure~\ref{f3}.  Observe also that~$h(20,3.8)<0$; in particular, this implies that large values of~$a$ are not always optimal to construct a counterexample. 

To argue the optimality and the consistency of our approach, we remark that the root of the mapping~$a\mapsto h(a,2)$ can be computed numerically, and it is given by~$b_0\approx 18.94281916344395$ (see Figure~\ref{f3}), which is the same threshold found in~\cite[Theorem 5.2]{rg12}, obtained with different arguments than ours in the study of the bilaplacian in two-dimensional ellipses.

\begin{center}
 \setlength{\unitlength}{1cm}
 \includegraphics[width=.9\textwidth]{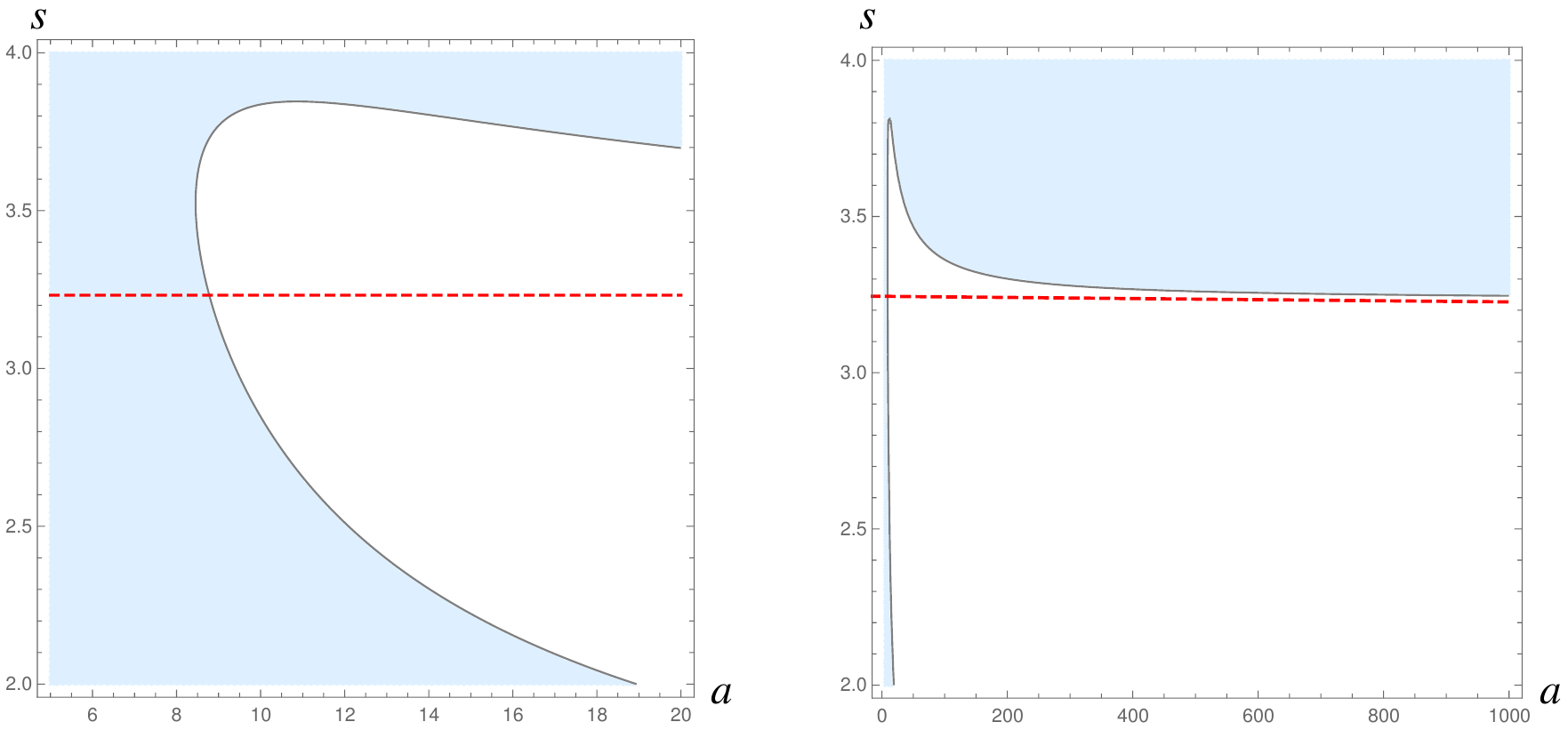}
\captionof{figure}{The nodal regions of~$h(a,s)$ for~$(a,s)\in(5,20)\times(2,4)$ (left) and for~$(a,s)\in(5,1000)\times(2,4)$ (right). The dashed line represents~$s_0=\sqrt{3}+3/2$. For~$(s,a)$ in the white region one can construct a counterexample to positivity preserving properties for~$\Ds$ in the ellipse~$E_a$ with axes~$1$ and~$\frac{1}{\sqrt{a}}$.}\label{f3}
\end{center}

\section{Point inversion transformations}\label{shapes}

For~$a,c>0$ and~$\nu\in \R^n$, let~$\sigma$ and~$\Omega=\Omega(a,c,\nu)$ be defined as in Corollary~\ref{omega:cor}, namely, for~$x\in \R^n\setminus\{-\nu\},$
\begin{align*}
 \sigma (x):=c\,\frac{x+\nu}{|x+\nu|^{2}}-\nu,
 \qquad  \Omega=\Omega(a,c,\nu):=\left\{x\in\R^n\::\: \sum_{i=1}^{n-1} \sigma_i(x)^2+a\sigma_n(x)^2<1\right\}.
\end{align*}
The geometrical meaning of the point inversion transformation~$\sigma$ is that of an inversion with respect to the boundary of a sphere of radius~$\sqrt{c}$ centered in~$-\nu$, see Figure~\ref{pinv}. Note that if~$c=1$ and~$\nu=0$, then~$\sigma$ is the usual Kelvin transform.

\begin{center}
\includegraphics[width=.29\textwidth]{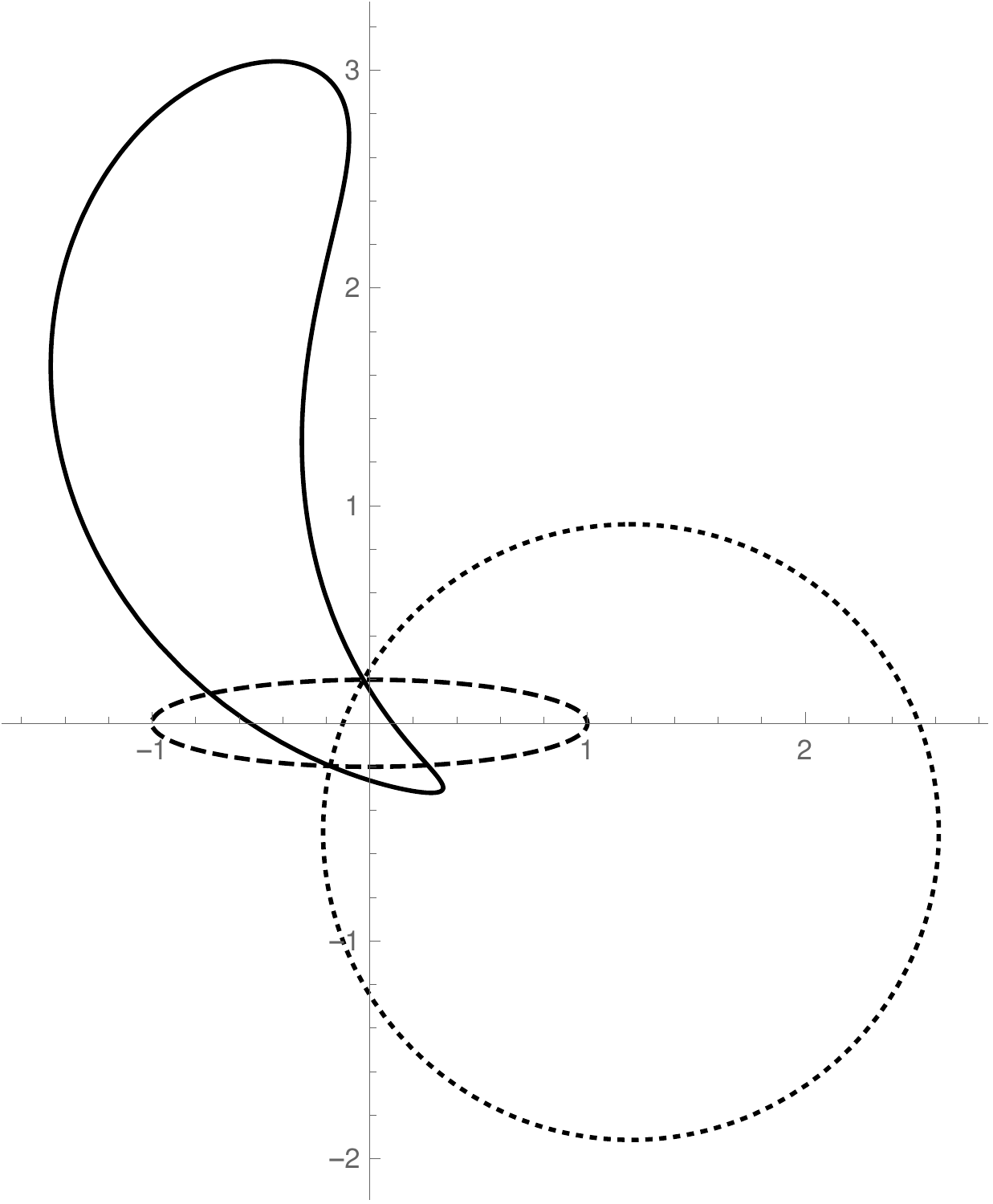}
\qquad
\includegraphics[width=.36\textwidth]{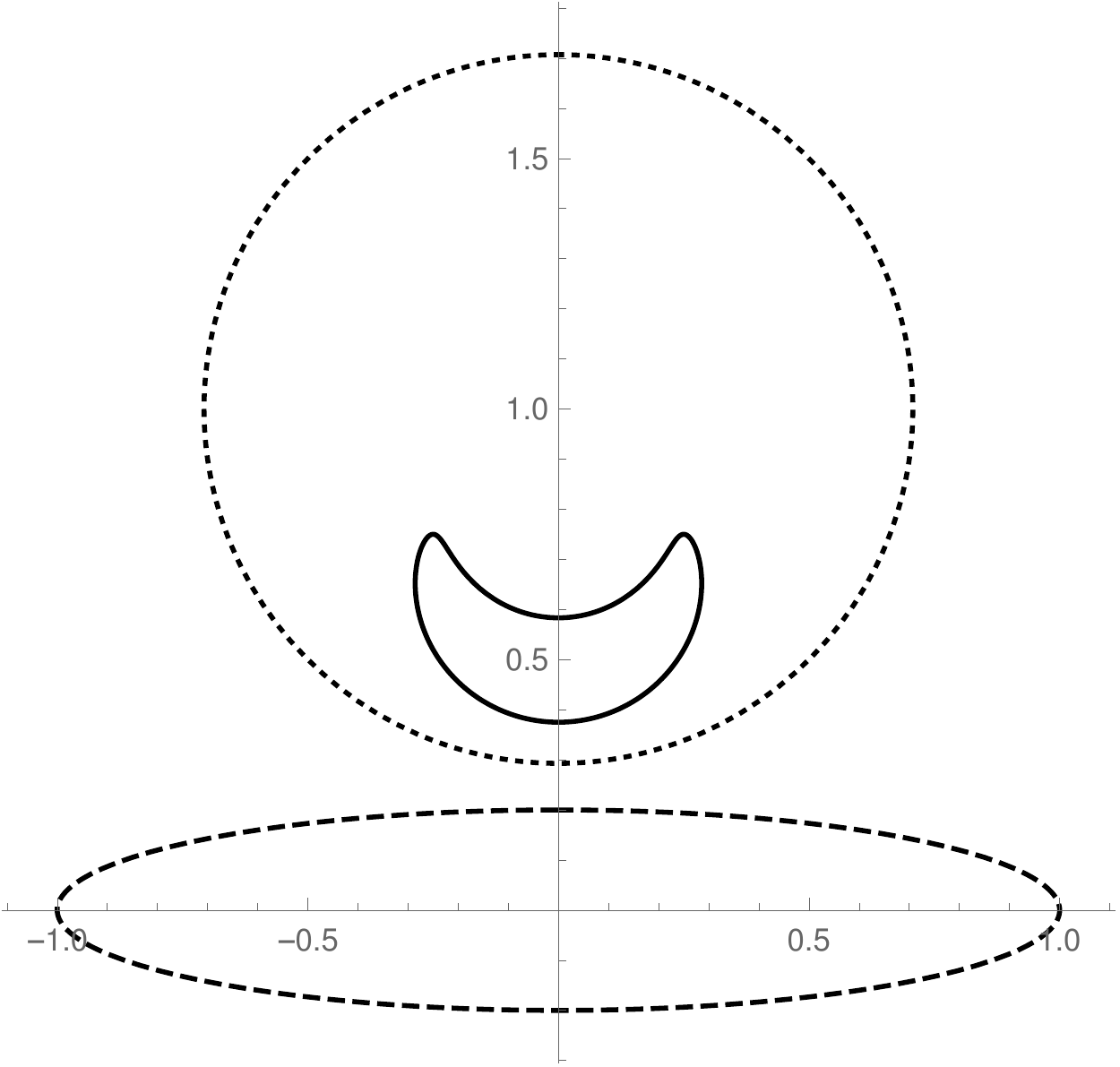}
\captionof{figure}{The associated point inversion~$\sigma$ is an inversion with respect to the boundary~$\partial B_{\sqrt{c}}(-\nu)$.  In the picture we see the ellipse~$E_{25}$ with axis of length 1 and~$\frac{1}{5}$ (dashed), its transformation~$\sigma(E_{25})$ and the circle~$\partial B_{\sqrt{c}}(-\nu)$ (dotted) for~$\nu=(-\frac{6}{5},\frac{1}{2})$,~$c=2$ (left), and for~$\nu=(0,-1)$,~$c=\frac{1}{2}$ (right). }\label{pinv}
\end{center}

Varying~$\nu$ and~$c$ gives rise to a wide variety of shapes, as illustrated in Figures~\ref{fig1} and~\ref{fig2} below. See also~\cite{gs1}, where a point inversion transformation is used to show the existence of domains for which the bilaplacian's torsion function is sign-changing. We thank G. Sweers for sharing references~\cite{gs1,gs2,dw05} with us.

\begin{proof}[Proof of Corollary~\ref{omega:cor}]
We argue as in~\cite[Proposition 1.6]{dcds}.  Fix~$c>0$,~$a\in \R^n$ with~$a_i>0$,~$\nu\in \R^n\setminus \partial E_a$,~$\sigma$ as in~\eqref{kelvin-trafo},~$\Omega:=\Omega(a,c,\nu)=\sigma(E_a)$, and let 
$K_{s} z(x):=|x+v|^{2s-n} z(\sigma (x))$ for~$x\in\R^n\backslash\{-v\}$ and~$z\in C(\R^n)$. Note that~$-\nu\not\in\overline{\Omega}$. Then, if~$u_s(x):=(1-\sum_{i=1}^{n}a_ix_i^2)^s_+$ we have that~$w_s=K_s u_s$. By~\cite[Lemma 3.3]{dcds}, one can compute~$(-\Delta)^s w_s$ pointwisely in~$\Omega$. Then, for every~$\phi\in C^\infty_c(\Omega)$,
\begin{align*}
& \int_{\Omega}w_s\,\Ds\phi\ =\ c^{2n-4s}\int_{\Omega}K_s(K_s w_s)(x)\,\Ds K_s(K_s\phi)(x)\ dx \\
& =c^{2n-2s}\int_{\Omega}\frac{K_s w_s(\sigma (x))}{|x+v|^{n-2s}}\,\frac{\Ds K_s\phi(\sigma (x))}{|x+\nu|^{n+2s}}\ dx\ 
=\ c^{n-2s}\int_{E_a}\,K_s w_s(y)\,\Ds K_s\phi(y)\ dy
=\int_{E_a} u_s\Ds K_s\phi,
\end{align*}
by a change of variables ($y=\sigma(x)$) and by Proposition~\ref{lem:sharm}, where we used that~$K_s(K_s z)=c^{2s-n} z$ and that the Jacobian for~$x\mapsto \sigma(x)$ is~$c^n|x+v|^{-2n}$.  Integrating by parts (see, for example,~\cite[Lemma 1.5]{cpaa}),
\begin{align*}
 \int_{\Omega}\,\Ds w_s(x) \phi(x)\ dx\ 
 =\int_{E_a} \,\Ds u_s(y) \frac{\phi(\sigma(y))}{|x+\nu|^{n-2s}}\ dy
 =\kappa\int_{\Omega} \frac{c^{n}|x+\nu|^{-2n}}{|\sigma(x)+\nu|^{n-2s}}\phi(x)\ dy
 =\int_{\Omega} \frac{k \phi(x)}{|x+\nu|^{n+2s}} \ dy
\end{align*}
for some constant~$k>0$. Since this holds for any~$\phi\in C^\infty_c(\Omega)$, we have that~$\Ds w_s(x)=k|x+\nu|^{-n-2s}$ pointwisely in~$\Omega$, as claimed.
\end{proof}

\begin{proof}[Proof of Corollary~\ref{omega:cor2}]
We use the notation from the proof of Corollary~\ref{omega:cor}.
Assume that~$\Omega$ is bounded or that~$n>4s$ and let~$U_\eps$ be given by Theorem~\ref{thm:ce:ext}. Then, a direct calculation shows that~$W:=K_s U_\eps\in L^2(\R^n)$.  Moreover,~$W$ is sign changing and, by Proposition~\ref{lem:sharm} and Plancherel's Theorem,
\begin{align*}
 \int_{\R^n}|\xi|^{2s}|\widehat W|^2
 &=\int_{\Omega} W (-\Delta)^{s} W
 =c^{2s}\int_{\Omega} \frac{U_\eps(\sigma(x))}{|x+\nu|^{n-2s}} \frac{(-\Delta)^{s} U_\eps(\sigma(x))}{|x+\nu|^{2s+n}}
 =c^{2s}\int_{\Omega} \frac{U_\eps(\sigma(x))}{|x+\nu|^{n-2s}} \frac{P(\sigma(x))}{|x+\nu|^{2s+n}}\\
 &=c^{n+2s}\int_{E_a} \frac{U_\eps(x)}{|\sigma(x)+\nu|^{n-2s}} \frac{P(x)}{|\sigma(x)+\nu|^{2s+n}}|x+\nu|^{-2n}
=c^{n+2s}\int_{E_a} U_\eps(x)P(x)<\infty,
\end{align*}
where~$\widehat W$ is the Fourier transform of~$W$ and~$P$ is a polynomial of degree two given by Lemmas~\ref{polynomial-of-degree-one} and~\ref{polynomial-of-degree-two}.  In particular~$W\in\cH^s_0(\R^n)$.  Arguing as in Corollary~\ref{omega:cor}, we obtain that~$(-\Delta)^s W>0$ pointwisely in~$\Omega$. 

\end{proof}
\begin{figure}[!htb]
\begin{center}
\includegraphics[width=\textwidth]{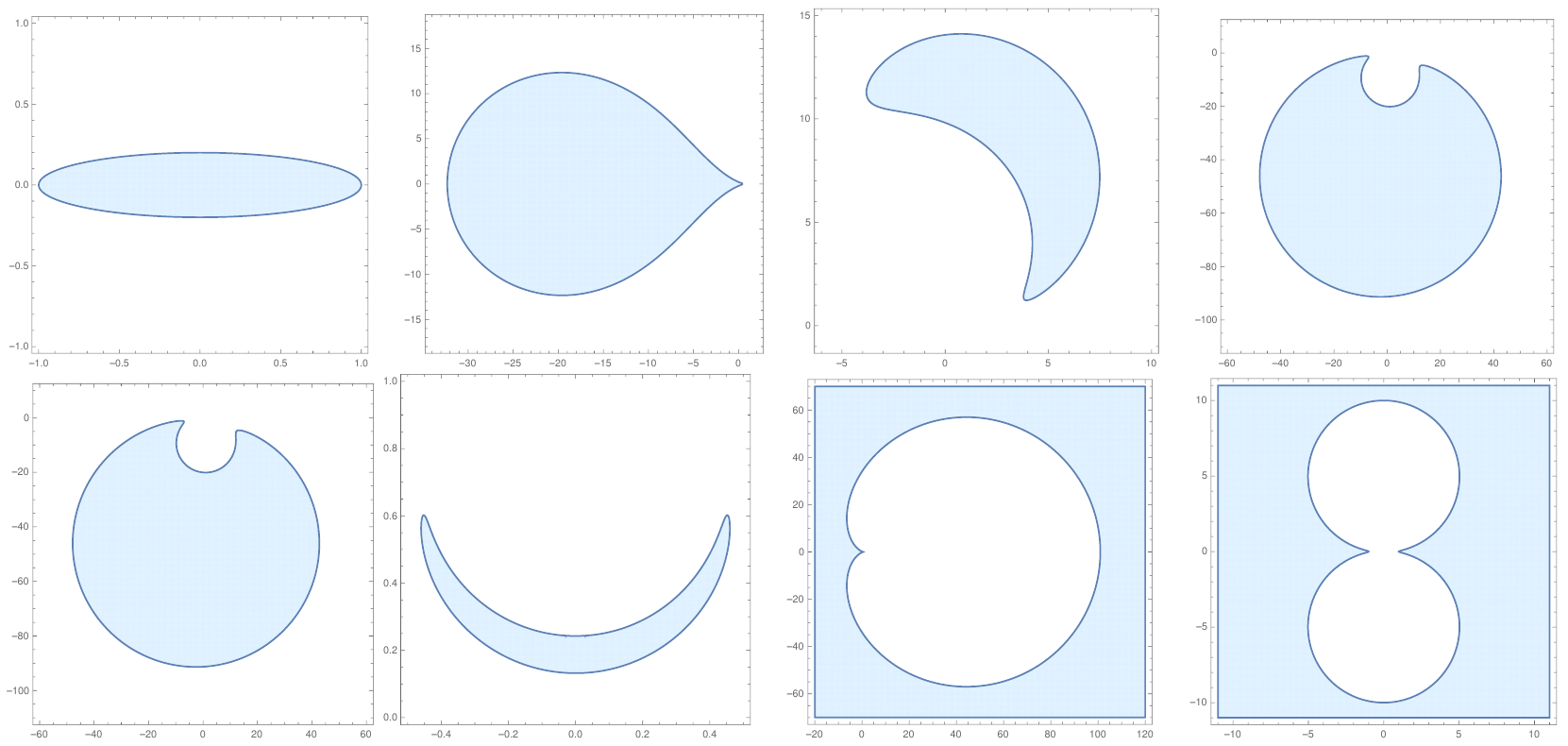}
\captionof{figure}{From left to right and top to bottom: the ellipse~$E_{25}$, $\Omega(5^2,1,(-1.03,0))$, $\Omega(5^2,10,(\frac{4}{5},\frac{4}{5}))$, $\Omega(5^2,10,(-\frac{3}{10},-\frac{3}{10}))$, $\Omega(15^2,1,(0,-1.1))$, and~$\Omega(5^2,\frac{1}{10},(-\frac{99}{100},0))$. The last two figures are~$\Omega(10^2,10,(0,0,-1.1))$ and~$\Omega(30^2,\frac{1}{10},(-0.99,0))$, which are examples of \textit{unbounded} domains.}\label{fig1}
\end{center}
\end{figure}

\begin{figure}
\begin{center}
\includegraphics[width=\textwidth]{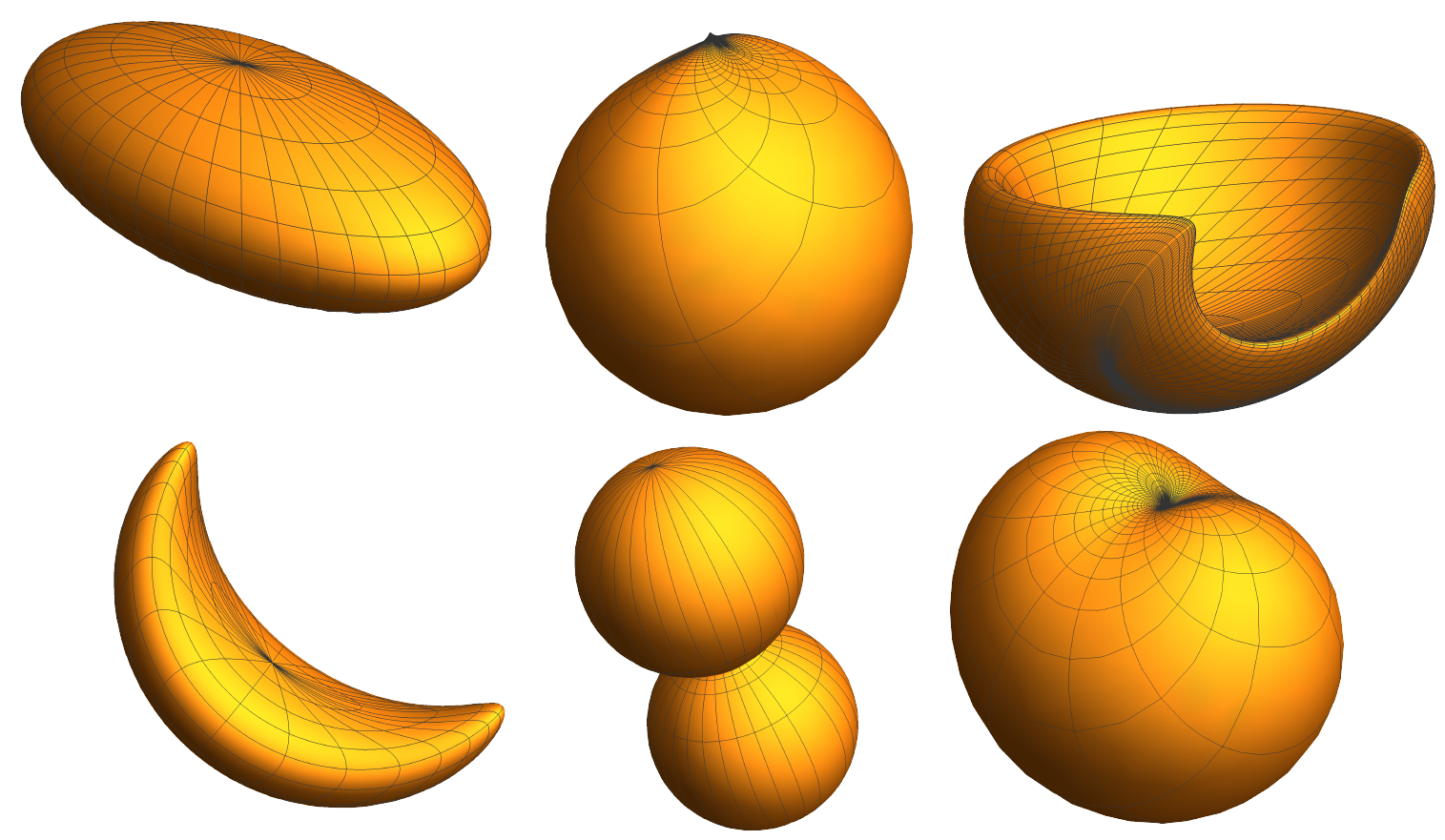}
\captionof{figure}{From left to right and top to bottom: The ellipsoid~$E_{(1,4,9)}$, $\Omega((1,4,9),1/2,(1.02,0,0))$, $\Omega((1,9,16^2),1/2,(-\frac{3}{10},-\frac{3}{10},-\frac{3}{10}))$, and~$\Omega((1,16,36),10,(0,0,-1.1))$. The last two figures are~$\Omega((1,4,9),1,(0,0,0))$ and~$\Omega((1,9,25),1/2,(-0.98,0,0))$, which are examples of \textit{unbounded} domains represented by the \textit{exterior} of the last two shapes.}\label{fig2}
\end{center}
\end{figure}

\appendix

\section{Asymptotic behavior of \texorpdfstring{$J_i^{(k)}$}{Jik}}

Recall that~$\mu$ is defined in~\eqref{constant-measure} with a diagonal matrix~$A$ with entries~$a_1=\ldots=a_{n-1}=1$ and~$a_n=a$. 
\begin{lemma}\label{asymptotic-ji}
Let~$n\geq 2$,~$k\in\{1,\ldots,n\}$, and
\begin{equation}\label{def:ji}
J_i^{(k)}=a_k^i\int_{\partial E_a} \theta_k^{2i} \mu(d\theta),\qquad i\in \N_0
\end{equation}
as in~\eqref{Ji}, where~$J_0:=J_0^{(1)}=\ldots=J_0^{(n)}$. Then
\begin{equation}\begin{split}\label{explicit-ji}
J_i^{(n)}&= a^{-1/2}\omega_{n-2}B\Big(i+\frac12,\frac{n-1}2\Big) \hf\Big(s+\frac{n}2, i+\frac12; i+\frac{n}2;1-\frac{1}{a}\Big)\qquad\text{and}\\
J_i^{(k)}=J_{i}^{(1)} &= a^{-1/2}\omega_{n-2}B\Big(i+\frac12,\frac{n-1}2\Big) \hf\Big(s+\frac{n}2, \frac12; i+\frac{n}2;1-\frac{1}{a}\Big)
\qquad\text{for~$k=1,\ldots,n-1$,}
\end{split}
\end{equation}
where~$\omega_{d}=\frac{2\pi^{(d+1)/2}}{\Gamma((d+1)/2)}=|\mathbb S^{d}|$ for~$d\in \N_0$. Moreover,~$\lim\limits_{a\uparrow \infty} \frac{J_i^{(n)}}{J_0}=1$ and,
\begin{enumerate}
	\item if~$s>i-\frac{1}{2}$, then
	\begin{align}\label{small-ji}
		\lim_{a\uparrow \infty} \frac{a^iJ_i^{(1)}}{J_0}=\frac{\Gamma(i+\frac{1}{2})\Gamma(\frac{1}{2}+s-i)}{\Gamma(\frac{1}{2})\Gamma(\frac{1}{2}+s)}=\prod_{k=0}^{i-1}\frac{1+2k}{2s-2k-1};
	\end{align}
	\item  If~$s\leq i-\frac{1}{2}$, then 
	\[
	\lim_{a\uparrow\infty} a^{ \frac{1}2}J_i^{(1)}=\omega_{n-2}\frac{B\Big(i+\frac12,\frac{n-1}2\Big)  B\Big(i-s-\frac{1}2,\frac{ 1}2\Big)}{B\Big(\frac12,\frac{n-1}2+i\Big)}
	\]
	and in particular~$\lim\limits_{a\uparrow \infty} \frac{a^{i}J_i^{(1)}}{J_0}=\infty$ and~$\lim\limits_{a\uparrow \infty} \frac{a^{i-j}J_i^{(1)}}{J_0}=0$ for~$j\in\{1,\ldots,i\}$ with~$s>i-j-\frac{1}{2}$.
\end{enumerate}
\end{lemma}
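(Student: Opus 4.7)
The plan is to reduce the surface integrals over $\partial E_a$ to a one-dimensional integral on $[0,1]$ that matches the Euler-type integral representation~\eqref{i} of the hypergeometric function, and then read off the asymptotics by invoking the classical behavior of~$\hf(\alpha,\beta;\gamma;z)$ at $z\to 1^-$.

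First I would parametrize $\partial E_a$ via the map $\eta\in\mathbb{S}^{n-1}\mapsto \theta\in\partial E_a$ given by $\theta_k=\eta_k$ for $k<n$ and $\theta_n=\eta_n/\sqrt{a}$. A direct computation gives
\[
|\theta|^2 = 1-\tfrac{a-1}{a}\eta_n^2, \qquad |A\theta|^2 = 1+(a-1)\eta_n^2, \qquad d\theta = \tfrac{|A\theta|}{\sqrt{a}}\,d\eta,
\]
so that $\mu(d\theta) = (\sqrt{a}\,|\theta|^{n+2s})^{-1}\,d\eta$. Using this, $a_k^i\theta_k^{2i}=\eta_k^{2i}$ for $k=n$ (by construction) and for $k<n$ (trivially), so in both cases
\[
J_i^{(k)} = \frac{1}{\sqrt{a}}\int_{\mathbb{S}^{n-1}} \frac{\eta_k^{2i}}{\big(1-\tfrac{a-1}{a}\eta_n^2\big)^{(n+2s)/2}}\,d\eta.
\]

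Next I would evaluate these integrals by Fubini. For $k=n$, slice by $\eta_n\in[-1,1]$ with $(\eta_1,\ldots,\eta_{n-1})=\sqrt{1-\eta_n^2}\,\omega$, $\omega\in\mathbb{S}^{n-2}$, obtaining
\[
J_i^{(n)} = \frac{2\omega_{n-2}}{\sqrt{a}}\int_0^1 \frac{\eta_n^{2i}(1-\eta_n^2)^{(n-3)/2}}{(1-\tfrac{a-1}{a}\eta_n^2)^{(n+2s)/2}}\,d\eta_n,
\]
and the substitution $t=\eta_n^2$ recasts this as $\omega_{n-2}/\sqrt{a}$ times an integral of the form $\int_0^1 t^{(i+1/2)-1}(1-t)^{(n-1)/2-1}(1-zt)^{-(s+n/2)}\,dt$ with $z=1-1/a$. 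Formula~\eqref{i} then identifies the integral with $B(i+1/2,(n-1)/2)\,\hf(s+n/2,\,i+1/2;\,i+n/2;\,1-1/a)$, matching the first line of~\eqref{explicit-ji}. For $k<n$ the same slicing works, but now the averaging of $\eta_k^{2i}=(1-\eta_n^2)^i\,\omega_k^{2i}$ over $\omega\in\mathbb{S}^{n-2}$ produces an extra factor $\omega_{n-3}\cdot B(i+1/2,(n-2)/2)$ and shifts the exponent of $(1-\eta_n^2)$ by $i$. After the substitution $t=\eta_n^2$ the parameter $\beta=i+1/2$ is replaced by $\beta=1/2$ (and $\gamma=i+n/2$ is unchanged), yielding the second line of~\eqref{explicit-ji} once one collects the gamma factors using $\omega_{n-2}=2\pi^{(n-1)/2}/\Gamma((n-1)/2)$.

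For the asymptotics, the key input is the behavior of $\hf(\alpha,\beta;\gamma;z)$ as $z\uparrow 1$: if $\gamma-\alpha-\beta<0$, then $\hf(\alpha,\beta;\gamma;z)\sim\frac{\Gamma(\gamma)\Gamma(\alpha+\beta-\gamma)}{\Gamma(\alpha)\Gamma(\beta)}(1-z)^{\gamma-\alpha-\beta}$; if $\gamma-\alpha-\beta>0$, then $\hf(\alpha,\beta;\gamma;1)=\frac{\Gamma(\gamma)\Gamma(\gamma-\alpha-\beta)}{\Gamma(\gamma-\alpha)\Gamma(\gamma-\beta)}$. For $J_i^{(n)}$ one has $\gamma-\alpha-\beta=-s-1/2<0$, so both $J_i^{(n)}$ and $J_0$ blow up like $a^{s+1/2}/\sqrt{a}=a^s$ with the same leading hypergeometric factor, and the ratio equals~$1$ after telescoping of gamma functions. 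For $J_i^{(1)}$ the relevant quantity is $\gamma-\alpha-\beta=i-s-1/2$: when $s>i-1/2$ this is negative and the same blow-up analysis plus elementary $\Gamma$-identities gives~\eqref{small-ji}; when $s<i-1/2$ it is positive, the hypergeometric function tends to a finite Gauss value, and only the $1/\sqrt{a}$ prefactor survives, yielding the stated limit of $a^{1/2}J_i^{(1)}$. The borderline $s=i-1/2$ contributes a logarithmic divergence, consistent with $B(i-s-1/2,1/2)=+\infty$.

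The main obstacle is bookkeeping: identifying the correct parameters $(\alpha,\beta,\gamma,z)$ in each of the two cases $k=n$ versus $k<n$, checking the resulting sign of $\gamma-\alpha-\beta$, and reducing the ratios of gamma functions to the compact Beta-function form appearing in the statement. Once the parametrization at the first step and the $z\to 1^-$ dictionary for $\hf$ are fixed, all the remaining claims, including the derived limits $\lim a^{i-j}J_i^{(1)}/J_0=0$ for $s>i-j-1/2$, follow by comparing powers of $a$ in the leading asymptotics of $J_i^{(1)}$ and $J_0$.
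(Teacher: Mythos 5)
Your proposal is correct, and its first half is essentially the paper's own computation in different clothing: you reduce $J_i^{(k)}$ to a one-dimensional integral of the form $\int_0^1 t^{b-1}(1-t)^{c-b-1}(1-zt)^{-\alpha}\,dt$ with $z=1-\frac1a$ and identify it through the Euler representation~\eqref{i}; your treatment of the surface measure (pushforward of $\mathbb S^{n-1}$ under $\diag(1,\dots,1,a^{-1/2})$, with Jacobian $a^{-1/2}|A\theta|$ cancelling the $|A\theta|$ in $\mu$) is a cleaner but equivalent version of the paper's explicit angular parametrization with Gram determinant, and your collection of constants ($\omega_{n-3}B(i+\tfrac12,\tfrac{n-2}2)\cdot B(\tfrac12,i+\tfrac{n-1}2)=\omega_{n-2}B(i+\tfrac12,\tfrac{n-1}2)$) checks out. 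Where you genuinely diverge is the asymptotic part: the paper does not quote connection formulas but re-derives the $a\uparrow\infty$ behavior by hand, substituting $t=(a-1)\tau$ in the Euler integral, splitting at $\tau=\tfrac12$ and passing to the limit, which keeps the argument self-contained; you instead invoke the classical $z\to1^-$ dictionary for $\hf(\alpha,\beta;\gamma;z)$ (Gauss's value when $\gamma-\alpha-\beta>0$, the $(1-z)^{\gamma-\alpha-\beta}$ blow-up when $\gamma-\alpha-\beta<0$, logarithmic in the borderline case). That shortcut is legitimate and shorter, and your parameter bookkeeping ($\gamma-\alpha-\beta=-s-\tfrac12$ for $J_i^{(n)}$, $=i-s-\tfrac12$ for $J_i^{(1)}$) reproduces exactly the limits in the statement, including part 2 after rewriting the Gamma quotients as the stated Beta ratio. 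Two small points you should make explicit if you write this up: when $\alpha+\beta-\gamma$ is a positive integer (e.g.\ $s+\tfrac12\in\N$ or $s+\tfrac12-i\in\N$) the standard connection formula degenerates and acquires logarithmic terms, but the leading power asymptotics you use remain valid (alternatively, argue directly on the Euler integral as the paper does, which sidesteps the case distinction); and for $n=2$ the intermediate factor you wrote as $\omega_{n-3}B(i+\tfrac12,\tfrac{n-2}2)$ should be interpreted as the moment $\int_{\mathbb S^0}\omega_1^{2i}=2$, since $\omega_{-1}$ is not defined, though the combined constant is the same.
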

\begin{proof}
Let~$\theta=(\sin(\phi_{n-1})P_{n-2}(\phi'),a^{-1/2}\cos(\phi_{n-1}))$, with~$\phi_{n-1}\in(-\pi,\pi)$ and~$P_{n-2}(\phi')$ is the parametrization of~$\partial B_1^{n-1}(0)\cap\{x_n>0\}$, that is~$P_0\equiv 1$ and for~$n>2$,
\[
P_{n-2}=\Big(P_{n-3}(\phi_1,\ldots,\phi_{n-3})\sin(\phi_{n-2}),\cos(\phi_{n-2})\Big),\quad\text{~$\phi_k\in(0,\pi)$ for~$k=1,\ldots,n-2$}
\]
Then
\[
\det J_{\theta}^TJ_{\theta}=\cos^2(\phi_{n-1})+a^{-1}\sin^2(\phi_{n-1})
\]
for~$n=2$ and for~$n>2$ we have
\begin{align*}
\det J_{\theta}^TJ_{\theta}&=\det\left(\begin{array}{cc}\cos(\phi_{n-1}){P_{n-2}^T}(\phi') & -a^{-1/2}\sin(\phi_{n-1})\\ \sin(\phi_{n-1})J_{P_{n-2}}^T(\phi') & 0\end{array}\right)
\left(\begin{array}{cc}\cos(\phi_{n-1}){P_{n-2}}(\phi') & \sin(\phi_{n-1})J_{P_{n-2}}(\phi')\\ -a^{-1/2}\sin(\phi_{n-1}) & 0\end{array}\right)\\
&=\det\left(\begin{array}{cc}\cos^2(\phi_{n-1})+a^{-1}\sin^2(\phi_{n-1}) & 0\\ 0 & \sin^2(\phi_{n-1})J_{P_{n-2}}^T(\phi')J_{P_{n-2}}(\phi')\end{array}\right)\\
&=\big(\cos^2(\phi_{n-1})+a^{-1}\sin^2(\phi_{n-1})\big) \, \big(\sin^2(\phi_{n-1})\big)^{n-2}\det J_{P_{n-2}}^T(\phi')J_{P_{n-2}}(\phi')\\
&=\big(\cos^2(\phi_{n-1})+a^{-1}\sin^2(\phi_{n-1})\big) \, \big(\sin^2(\phi_{n-1})\big)^{n-2}\prod_{k=1}^{n-2}\sin^{2(k-1)}\phi_k.
\end{align*}
We begin with~$k=n$, where the above parametrization gives
\begin{align*}
& J_i^{(n)} =
\frac{\omega_{n-2}}{2}\int_{-\pi}^{\pi}\frac{\cos^{2i}(\phi_{n-1})\big(\cos^2(\phi_{n-1})+\frac{1}{a}\sin^2(\phi_{n-1})\big)^{1/2}\big(\sin^2(\phi_{n-1})\big)^{n/2-1}}{\big(\sin^2(\phi_{n-1})+\frac{1}{a}\cos^2(\phi_{n-1}))\big)^{s+n/2}\big(\sin^2(\phi_{n-1})+a\cos^2(\phi_{n-1})\big)^{1/2}} \;d\phi_{n-1} \\
&= \frac{\omega_{n-2}}{2a^{ 1/2}}\int_{-\pi}^{\pi}\frac{\cos^{2i}(\phi_{n-1})\big(1-\cos^2(\phi_{n-1})\big)^{n/2-1}}{\big(1-(1-\frac{1}{a})\cos^2(\phi_{n-1}))\big)^{s+n/2}} \,d\phi_{n-1} = \frac{2\omega_{n-2}}{a^{1/2}}\int_0^{\pi/2}\frac{\cos^{2i}(\phi_{n-1})\big(1-\cos^2(\phi_{n-1})\big)^{n/2-1}}{\big(1-(1-\frac{1}{a})\cos^2(\phi_{n-1}))\big)^{s+n/2}} \,d\phi_{n-1} 
\end{align*}
by symmetry. With the change of variable~$\phi_{n-1}=\arccos(t)$,~$\frac{d\phi_{n-1}}{dt}=-\frac{1}{\sqrt{1-t^2}}$ (and afterwards~$t^2=\tau$) it follows that
\begin{align*}
2\int_{0}^{\pi/2}&\frac{\cos^{2i}(\phi_{n-1})\big(1-\cos^2(\phi_{n-1})\big)^{n/2-1}}{\big(1-(1-\frac{1}{a})\cos^2(\phi_{n-1}))^{s+n/2}} \;d\phi_{n-1}
=2\int_{0}^{1}\frac{t^{2i}\big(1-t^2\big)^{(n-3)/2}}{\big(1-(1-\frac{1}{a})t^2)^{s+n/2}} \;dt \\
&=\int_{0}^{1}\tau^{i-1/2}(1-\tau)^{(n-3)/2}(1-(1-\frac{1}{a})\tau)^{-s-n/2} \;d\tau \\
&= B\Big(i+\frac12,\frac{n-1}2\Big) \hf\Big(s+\frac{n}2, i+\frac12; i+\frac{n}2;1-\frac{1}{a}\Big),
\end{align*}
where we have used the integral representation of the hypergeometric function~$\hf$. This proves~\eqref{explicit-ji} for~$k=n$. 

In the following, given two functions~$f$ and~$g$, we use notation~$f\sim g$ as~$a\uparrow\infty$, if~$\lim\limits_{a\uparrow\infty} \frac{f(a)}{g(a)}=1$. With the change of variable~$t=(a-1)\tau$ we have as~$a\uparrow\infty$
\begin{align}
& \,B\Big(i+\frac{1}{2},\frac{n-1}{2}\Big)\hf\Big(s+\frac{n}{2}, i+\frac{1}{2}; i+\frac{n}{2};1-\frac{1}{a}\Big) 
\sim \int_{1/2}^1 \tau^{i-1/2}(1-\tau)^{(n-3)/2}\bigg(1-\Big(1-\frac{1}{a}\Big)\tau\bigg)^{-s-n/2} \;d\tau \notag\\
&= \frac{a^{s+n/2}}{{(a-1)}^{i+1/2}}\int_{(a-1)/2}^{a-1} t^{i-1/2}\Big(1-\frac{t}{a-1}\Big)^{(n-3)/2}{(a-t)}^{-s-n/2} \;dt \notag\\
&=\frac{a^{s+n/2}}{{(a-1)}^{i+n/2-1}}\int_{(a-1)/2}^{a-1} t^{i-1/2}(a-1-t)^{(n-3)/2}{(a-t)}^{-s-n/2} \;dt \notag\\
&=\frac{a^{s+n/2}}{(a-1)^{i+n/2-1}}\int_{0}^{(a-1)/2} (a-1-t)^{i-1/2} t^{(n-3)/2}{(t+1)}^{-s-n/2} \;dt \notag\\
&=\frac{a^{s+n/2}}{(a-1)^{(n-1)/2}}\int_{0}^{(a-1)/2}\Big(1-\frac{t}{a-1}\Big)^{i-1/2} t^{(n-3)/2}{(t+1)}^{-s-n/2}  \;dt \notag\\
&\sim a^{s+1/2} \int_0^{\infty}t^{(n-3)/2}(t+1)^{-s-n/2}\;dt=a^{s+1/2}  B\Big(\frac{n-1}{2},s+\frac{1}2\Big).
\label{useful for J0}
\end{align}\label{useful for J02}
Note now that the asymptotic behavior of~$J_i$ follows from~\eqref{useful for J0} and it reads
\begin{align}
J_i^{(n)}\sim \omega_{n-2}B\Big(\frac{n-1}{2},s+\frac{1}2\Big)\,a^{s}
\qquad \text{as } a\uparrow\infty.
\end{align} 
so that~$\lim\limits_{a\uparrow\infty}\frac{J_i^{(n)}}{J_0}=1$ as claimed.

For~$k=1,\ldots,n-1$, by symmetry, it follows that~$J_i^{(k)}=J_i^{(1)}$. Moreover, with the above parametrization we have
\[
\theta_1=\prod_{k=1}^{n-1}\sin(\phi_k),\quad\text{with~$\phi_k\in(0,\pi)$},
\]
so that with a similar calculation as for~$k=n$ we have
\begin{align}
J_i^{(1)}&=\int_{-\pi}^{\pi}\frac{\big(\cos^2(\phi_{n-1})+\frac{1}{a}\sin^2(\phi_{n-1})\big)^{1/2}\big(\sin^2(\phi_{n-1})\big)^{n/2-1+i}}{\big(\sin^2(\phi_{n-1})+\frac{1}{a}\cos^2(\phi_{n-1}))\big)^{s+n/2}\big(\sin^2(\phi_{n-1})+a\cos^2(\phi_{n-1})\big)^{1/2}} \;d\phi_{n-1} \, \Bigg(\prod_{k=1}^{n-2}\int_{0}^{\pi}\sin^{k-1+2i}(\phi_k)\;d\phi_k\Bigg)\notag\\
&=\frac{4}{a^{1/2}}\int_0^{\pi/2}\frac{\big(1-\cos^2(\phi_{n-1})\big)^{n/2-1+i}}{\big(1-(1-\frac{1}{a})\cos^2(\phi_{n-1}))\big)^{s+n/2}} \,d\phi_{n-1} \, \Bigg(\prod_{k=1}^{n-2}\frac{\Gamma\big(\frac{1}{2}\big)\Gamma\big(i+\frac{k}2\big)}{\Gamma\big(i+\frac{k+1}2\big)}\Bigg)\notag\\
&=\frac{2\pi^{(n-2)/2}}{a^{1/2}}\int_{0}^{1}\tau^{-1/2}(1-\tau)^{(n-3)/2+i}\bigg(1-\Big(1-\frac{1}{a}\Big)\tau\bigg)^{-s-n/2} \;d\tau \, \Bigg(\prod_{k=1}^{n-2}\frac{ \Gamma\big(i+\frac{k}2\big)}{\Gamma(i+\frac{k+1}2\big)}\Bigg)\notag\\
&=\frac{2\pi^{(n-2)/2} }{a^{1/2}} B\Big(\frac12,\frac{n-1}2+i\Big) \hf\Big(s+\frac{n}2, \frac12; i+\frac{n}2;1-\frac{1}{a}\Big)\frac{\Gamma\big(i+\frac12\big)}{\Gamma\big(i+\frac{n-1}2\big)},\label{helpful-asymptotics}
\end{align}
from which~\eqref{explicit-ji} follows for~$k=1,\ldots,n-1$. Note that if~$s<i-\frac{1}{2}$, then, using again the integral representation of the hypergeometric function and the dominated convergence theorem, we have
\begin{align*}
& \lim_{a\uparrow\infty}\hf\Big(s+\frac{n}2, \frac12; i+\frac{n}2;1-\frac{1}{a}\Big) \ = \\
& =\ B\Big(\frac12,\frac{n-1}2+i\Big)^{-1} \lim_{a\uparrow\infty} \int_{0}^{1}{\tau}^{-1/2}\big(1-\tau\big)^{(n-3)/2+i}\bigg(1-\Big(1-\frac{1}{a}\Big)\tau\bigg)^{-s-n/2} \;d\tau \\
& =\ B\Big(\frac12,\frac{n-1}2+i\Big)^{-1}\int_{0}^{1}{\tau}^{-1/2}\big(1-\tau\big)^{i-s-3/2}  \;d\tau =\ B\Big(\frac12,\frac{n-1}2+i\Big)^{-1} B\Big(i-s-\frac{1}2,\frac{ 1}2\Big)
\end{align*}
by the integral representation of the beta function. Hence in this case
\begin{equation}\label{second-case-ji-part1}
\lim_{a\uparrow\infty}  a^{ \frac{1}2}J_i^{(1)}=\omega_{n-2}\frac{B\big(i+\frac12,\frac{n-1}2\big)  B\big(i-s-\frac{1}2,\frac12\big)}{B\big(\frac12,\frac{n-1}2+i\big)},
\end{equation}
which shows the first part in 2. If~$s>i-\frac12$ then with the change of variable~$t=(a-1)\tau$ we have from~\eqref{helpful-asymptotics} as~$a\uparrow\infty$
\begin{align}
a^iJ_i^{(1)}
&=\frac{2\pi^{(n-2)/2} }{a^{-i+1/2}}\frac{\Gamma\big(i+\frac12\big)}{\Gamma\big(i+\frac{n-1}2\big)} \, B\Big(\frac12,\frac{n-1}2+i\Big) 
\hf\Big(s+\frac{n}2, \frac12; i+\frac{n}2;1-\frac{1}{a}\Big)\notag\\
&\sim \frac{2\pi^{(n-2)/2}}{a^{-i+1/2}}\frac{\Gamma\big(i+\frac12\big)}{\Gamma\big(i+\frac{n-1}2\big)}
\int_{1/2}^{1}\tau^{-1/2}(1-\tau)^{(n-3)/2+i}\bigg(1-\Big(1-\frac{1}{a}\Big)\tau\bigg)^{-s-n/2} \;d\tau \notag\\
&=\frac{2\pi^{(n-2)/2}a^{s+(n-1)/2+i}}{(a-1)^{1/2}}\frac{\Gamma\big(i+\frac12\big)}{\Gamma\big(i+\frac{n-1}2\big)}
\int_{(a-1)/2}^{(a-1)}t^{-1/2}\Big(1-\frac{t}{a-1}\Big)^{(n-3)/2+i}(a-t)^{-s-n/2} \;dt\notag\\
&=\frac{2\pi^{(n-2)/2}a^{s+(n-1)/2+i}}{(a-1)^{n/2+i-1}}\frac{\Gamma\big(i+\frac12\big)}{\Gamma\big(i+\frac{n-1}2\big)}
\int_{(a-1)/2}^{(a-1)}t^{-1/2}(a-1-t)^{(n-3)/2+i}(a-t)^{-s-n/2} \;dt\notag\\
&=\frac{2\pi^{(n-2)/2}a^{s+(n-1)/2+i}}{(a-1)^{n/2+i-1}}\frac{\Gamma\big(i+\frac12\big)}{\Gamma\big(i+\frac{n-1}2\big)}
\int_{0}^{(a-1)/2}(a-1-t)^{-1/2}t^{(n-3)/2+i}(t+1)^{-s-n/2} \;dt\notag\\
&=\frac{2\pi^{(n-2)/2}a^{s+(n-1)/2+i}}{(a-1)^{(n-1)/2+i}}\frac{\Gamma\big(i+\frac12\big)}{\Gamma\big(i+\frac{n-1}2\big)}
\int_{0}^{(a-1)/2}\Big(1-\frac{t}{a-1}\Big)^{-1/2}t^{(n-3)/2+i}(t+1)^{-s-n/2} \;dt\notag\\
&\sim 2\pi^{(n-2)/2}a^{s}\frac{\Gamma\big(i+\frac12\big)}{\Gamma\big(i+\frac{n-1}2\big)}\int_{0}^{\infty} t^{(n-3)/2+i}(t+1)^{-s-n/2} \;dt\notag\\
&=2\pi^{(n-2)/2}a^{s}\frac{\Gamma\big(i+\frac12\big)}{\Gamma(i+\frac{n-1}2\big)} \, B\Big(i+\frac{n-1}2,s-i+\frac12\Big).\label{useful for J03}
\end{align}
Finally, we have with~\eqref{useful for J0}
\begin{align*}
\lim_{a\uparrow\infty}\frac{a^iJ_i^{(1)}}{J_0}
&=\frac{2\pi^{(n-2)/2} \Gamma\big(i+\frac12\big) B\big(i+\frac{n-1}2,s-i+\frac12\big)}{\omega_{n-2}\Gamma\big(i+\frac{n-1}2\big)\,B\big(\frac{n-1}{2},s+\frac{1}2\big)}\\
&= \frac{ \Gamma\big(i+\frac12\big) \Gamma\big(s-i+\frac12\big)}{\pi^{1/2} \Gamma\big(s+\frac12\big)}=\prod_{k=0}^{i-1}\frac{1+2k}{2s-2k-1}, 
\qquad \text{ if } s>i-\frac{1}{2}.
\end{align*}
as claimed in 1. 
	If instead~$s< i-\frac{1}2$, then by~\eqref{second-case-ji-part1} we have
	\begin{align*}
	\lim_{a\uparrow\infty }\frac{a^{i-j}J_i^{(1)}}{J_0}=
	\left\{\begin{aligned} 
	& +\infty & & \text{if }i-j>s+\frac12, \\
	& 0 & &\text{if }i-j<s+\frac12. 
	\end{aligned}\right.
	\end{align*}
	The case of~$s=i-\frac{1}{2}$ now follows similarly, noting that in this case~$a^{1/2}J_i=O(\ln(a))$ for~$a\uparrow\infty$.

\end{proof}

\begin{lemma}\label{identities-ik}
In the notations of Lemma~\ref{asymptotic-ji}, we have
\begin{align}
\int_{\partial E_a}\theta_i^2\,\theta_k^2 \; \mu(d\theta) & = \frac13 J_2^{(1)} & \text{for }i,k\in\{1,\ldots,n-1\},\ i\neq k, \label{ik} \\
a\int_{\partial E_a}\theta_i^2\,\theta_n^2 \; \mu(d\theta) & = J_1^{(1)}-\frac{n+1}{3}J_2^{(1)} & \text{for }i\in\{1,\ldots,n-1\}. \label{in}
\end{align}
\end{lemma}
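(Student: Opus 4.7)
The plan is to exploit the fact that $a_1=\ldots=a_{n-1}=1$, so that $\mu$ is invariant under the block-diagonal orthogonal transformations $\diag(R',1)$ with $R'\in O(n-1)$. Concretely, for $\theta\in\partial E_a$ I would set $(\theta_1,\ldots,\theta_{n-1})=r\omega$ with $r\in[0,1]$ and $\omega\in S^{n-2}$, so that $\theta_n^2=(1-r^2)/a$. Since $|\theta|^2=r^2+\theta_n^2$ and $|A\theta|^2=r^2+a^2\theta_n^2$ depend only on $r$, the measure $\mu$ disintegrates as a product $d\lambda(r)\,d\sigma(\omega)$ for some finite measure $\lambda$ on $[0,1]$ and the surface measure $d\sigma$ on $S^{n-2}$. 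Consequently, for every $i,j\in\{1,\ldots,n-1\}$,
$$\int_{\partial E_a}\theta_i^2\theta_j^2\,\mu(d\theta)=\bigg(\int_0^1 r^4\,d\lambda(r)\bigg)\int_{S^{n-2}}\omega_i^2\omega_j^2\,d\sigma.$$

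For \eqref{ik} with $i\neq k\in\{1,\ldots,n-1\}$, this reduces the claim to the purely spherical identity
$$\int_{S^{n-2}}\omega_1^2\omega_2^2\,d\sigma=\frac{1}{3}\int_{S^{n-2}}\omega_1^4\,d\sigma, \qquad n\geq 3,$$
which in turn follows from the classical Dirichlet formula
$$\int_{S^{d-1}}\omega_1^{2\alpha_1}\cdots\omega_d^{2\alpha_d}\,d\sigma=\frac{2\,\Gamma(\alpha_1+\tfrac12)\cdots\Gamma(\alpha_d+\tfrac12)}{\Gamma(\alpha_1+\ldots+\alpha_d+\tfrac{d}{2})},$$
since the two spherical integrals differ precisely by the factor $\Gamma(5/2)\Gamma(1/2)/\Gamma(3/2)^2=3$. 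The case $n=2$ is vacuous as there is no admissible pair $i\neq k$ in $\{1,\ldots,n-1\}$.

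For \eqref{in}, the identity follows directly from \eqref{ik} and the boundary constraint $\sum_{i=1}^{n-1}\theta_i^2+a\theta_n^2=1$ on $\partial E_a$. Multiplying this constraint by $\theta_k^2$ for any $k\in\{1,\ldots,n-1\}$ and integrating against $\mu$ yields
$$J_1^{(1)}=\int_{\partial E_a}\theta_k^4\,\mu(d\theta)+\sum_{\substack{i=1\\ i\neq k}}^{n-1}\int_{\partial E_a}\theta_k^2\theta_i^2\,\mu(d\theta)+a\int_{\partial E_a}\theta_k^2\theta_n^2\,\mu(d\theta).$$
The first integral on the right equals $J_2^{(1)}$ by definition, while each of the $n-2$ middle terms equals $\tfrac{1}{3}J_2^{(1)}$ by \eqref{ik}. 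Hence the right-hand side simplifies to $\tfrac{n+1}{3}J_2^{(1)}+a\int_{\partial E_a}\theta_k^2\theta_n^2\,\mu(d\theta)$, which rearranges to \eqref{in}. The only non-routine ingredient in the whole argument is the Dirichlet formula (equivalently, the beta-function integral $\int_0^{\pi/2}\cos^p\sin^q$); everything else is symmetry and algebra, so no step presents a real obstacle.
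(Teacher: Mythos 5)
Your proposal is correct and follows essentially the same route as the paper: for \eqref{ik} both arguments exploit the rotational symmetry in the first $n-1$ coordinates to factor out everything depending only on the ``radial'' variable of $(\theta_1,\ldots,\theta_{n-1})$ and reduce the claim to the classical fourth-moment ratio $1/3$ (the paper via the explicit angular parametrization and $\int_0^\pi\sin^2\cos^2\,d\phi\,/\int_0^\pi\sin^4\,d\phi=1/3$, you via a disintegration of $\mu$ and the Dirichlet moment formula on $S^{n-2}$ --- the same computation in slightly more abstract clothing). For \eqref{in} your argument, multiplying the constraint $\sum_{i=1}^{n-1}\theta_i^2+a\theta_n^2=1$ by $\theta_i^2$ and integrating, is exactly the paper's proof.
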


\begin{proof}
The proof of~\eqref{ik} closely follows the computation in the proof of~\eqref{explicit-ji}.
Indeed, by symmetry,
\begin{align*}
\int_{\partial E_a}\theta_i^2\,\theta_k^2 \; \mu(d\theta) = \int_{\partial E_a}\theta_1^2\,\theta_2^2 \; \mu(d\theta)
\qquad\text{for }i,k\in\{1,\ldots,n-1\},\ i\neq k, 
\end{align*}
and with
\[
J:=\int_{-\pi}^{\pi}\frac{\big(\cos^2(\phi_{n-1})+\frac{1}{a}\sin^2(\phi_{n-1})\big)^{1/2}\big(\sin^2(\phi_{n-1})\big)^{n/2+1}}{\big(\sin^2(\phi_{n-1})+\frac{1}{a}\cos^2(\phi_{n-1}))\big)^{s+n/2}\big(\sin^2(\phi_{n-1})+a\cos^2(\phi_{n-1})\big)^{1/2}} \;d\phi_{n-1}\Bigg(\prod_{k=2}^{n-2}\int_{0}^{\pi}\sin^{k+3}(\phi_k)\;d\phi_k\Bigg),
\]
observing that~$\theta_2=\cos(\phi_1)\prod_{k=2}^{n-1}\sin(\phi_k)$, we have
\begin{multline*}
\int_{\partial E_a}\theta_1^2\,\theta_2^2 \; \mu(d\theta) = J\int_{0}^{\pi}\sin^{2}(\phi_1)\,\cos^2(\phi_1)\;d\phi_1 =J\Big(\int_{0}^{\pi}\sin^{2}(\phi_1)\;d\phi_1-\int_{0}^{\pi}\sin^{4}(\phi_1)\;d\phi_1\Big)= \\
=J_2^{(1)}\Big(\frac{\frac\pi2}{\frac38\pi}-1\Big)=\frac13 J_2^{(1)}.
\end{multline*}
For the proof of~\eqref{in} we proceed as follows using again the symmetry:
\begin{align*}
a\int_{\partial E_a}\theta_i^2\,\theta_n^2 \; \mu(d\theta) =
a\int_{\partial E_a}\theta_1^2\,\theta_n^2 \; \mu(d\theta) =
\int_{\partial E_a}\theta_1^2\,\Big(1-\sum_{i=1}^{n-1}\theta_i^2\Big) \; \mu(d\theta)  =
J_1^{(1)}-J_2^{(1)}-\frac{n-2}{3}J_2^{(1)},
\end{align*}
where we have used~\eqref{ik} in the last identity.
\end{proof}

\section*{Acknowledgments}
We thank the anonymous referee for the careful reading of the manuscript and for several helpful comments and suggestions. 

\begin{bibdiv}
\begin{biblist}

\bib{dcds}{article}{
      author={Abatangelo, N.},
      author={Dipierro, S.},
      author={Fall, M.M.},
      author={Jarohs, S.},
      author={Salda\~{n}a, A.},
       title={Positive powers of the {L}aplacian in the half-space under
  {D}irichlet boundary conditions},
        date={2019},
        ISSN={1078-0947},
     journal={Discrete Contin. Dyn. Syst.},
      volume={39},
      number={3},
       pages={1205\ndash 1235},
         url={https://doi.org/10.3934/dcds.2019052},
}

\bib{na}{article}{
      author={Abatangelo, N.},
      author={Jarohs, S.},
      author={Salda\~{n}a, A.},
       title={Green function and {M}artin kernel for higher-order fractional
  {L}aplacians in balls},
        date={2018},
        ISSN={0362-546X},
     journal={Nonlinear Anal.},
      volume={175},
       pages={173\ndash 190},
         url={https://doi.org/10.1016/j.na.2018.05.019},
}

\bib{ccm}{article}{
      author={Abatangelo, N.},
      author={Jarohs, S.},
      author={Salda\~{n}a, A.},
       title={Integral representation of solutions to higher-order fractional
  {D}irichlet problems on balls},
        date={2018},
        ISSN={0219-1997},
     journal={Commun. Contemp. Math.},
      volume={20},
      number={8},
       pages={1850002, 36},
         url={https://doi.org/10.1142/S0219199718500025},
}

\bib{proc}{article}{
      author={Abatangelo, N.},
      author={Jarohs, S.},
      author={Salda\~{n}a, A.},
       title={On the loss of maximum principles for higher-order fractional
  {L}aplacians},
        date={2018},
        ISSN={0002-9939},
     journal={Proc. Amer. Math. Soc.},
      volume={146},
      number={11},
       pages={4823\ndash 4835},
         url={https://doi.org/10.1090/proc/14165},
}

\bib{cpaa}{article}{
      author={Abatangelo, N.},
      author={Jarohs, S.},
      author={Salda\~{n}a, A.},
       title={Positive powers of the {L}aplacian: from hypersingular integrals
  to boundary value problems},
        date={2018},
        ISSN={1534-0392},
     journal={Commun. Pure Appl. Anal.},
      volume={17},
      number={3},
       pages={899\ndash 922},
         url={https://doi.org/10.3934/cpaa.2018045},
}

\bib{ga}{inproceedings}{
      author={Abatangelo, N.},
      author={Valdinoci, E.},
       title={Getting acquainted with the fractional {L}aplacian},
        date={2019},
   booktitle={Contemporary research in elliptic {PDE}s and related topics},
      volume={33},
   publisher={Springer, Cham},
       pages={1\ndash 105},
        note={Springer INdAM Ser., vol. 33, Springer, Cham},
}

\bib{abramowitz}{book}{
      author={Abramowitz, M.},
      author={Stegun, I.A.},
       title={Handbook of mathematical functions with formulas, graphs, and
  mathematical tables},
      series={National Bureau of Standards Applied Mathematics Series},
   publisher={For sale by the Superintendent of Documents, U.S. Government
  Printing Office, Washington, D.C.},
        date={1964},
      volume={55},
}

\bib{BM18}{article}{
      author={Brezis, H.},
      author={Mironescu, P.},
       title={Gagliardo-{N}irenberg inequalities and non-inequalities: the full
  story},
        date={2018},
        ISSN={0294-1449},
     journal={Ann. Inst. H. Poincar\'{e} Anal. Non Lin\'{e}aire},
      volume={35},
      number={5},
       pages={1355\ndash 1376},
         url={https://doi.org/10.1016/j.anihpc.2017.11.007},
}

\bib{bv}{book}{
      author={Bucur, C.},
      author={Valdinoci, E.},
       title={Nonlocal diffusion and applications},
      series={Lecture Notes of the Unione Matematica Italiana},
   publisher={Springer, [Cham]; Unione Matematica Italiana, Bologna},
        date={2016},
      volume={20},
        ISBN={978-3-319-28738-6; 978-3-319-28739-3},
}

\bib{cd80}{article}{
      author={Coffman, C.V.},
      author={Duffin, R.J.},
       title={On the structure of biharmonic functions satisfying the clamped
  plate conditions on a right angle},
        date={1980},
        ISSN={0196-8858},
     journal={Adv. in Appl. Math.},
      volume={1},
      number={4},
       pages={373\ndash 389},
         url={https://doi.org/10.1016/0196-8858(80)90018-4},
}

\bib{dw05}{article}{
      author={Dall'Acqua, A.},
      author={Sweers, G.},
       title={The clamped-plate equation for the lima\c{c}on},
        date={2005},
        ISSN={0373-3114},
     journal={Ann. Mat. Pura Appl. (4)},
      volume={184},
      number={3},
       pages={361\ndash 374},
         url={https://doi.org/10.1007/s10231-004-0121-9},
}

\bib{dg}{article}{
      author={Dipierro, S.},
      author={Grunau, H.-C.},
       title={Boggio's formula for fractional polyharmonic {D}irichlet
  problems},
        date={2017},
        ISSN={0373-3114},
     journal={Ann. Mat. Pura Appl. (4)},
      volume={196},
      number={4},
       pages={1327\ndash 1344},
         url={https://doi.org/10.1007/s10231-016-0618-z},
}

\bib{d49}{article}{
      author={Duffin, R.J.},
       title={On a question of {H}adamard concerning super-biharmonic
  functions},
        date={1949},
     journal={J. Math. Physics},
      volume={27},
       pages={253\ndash 258},
}

\bib{dyda}{article}{
      author={Dyda, B.},
       title={Fractional calculus for power functions and eigenvalues of the
  fractional {L}aplacian},
        date={2012},
        ISSN={1311-0454},
     journal={Fract. Calc. Appl. Anal.},
      volume={15},
      number={4},
       pages={536\ndash 555},
         url={https://doi.org/10.2478/s13540-012-0038-8},
}

\bib{meijer}{article}{
      author={Dyda, B.},
      author={Kuznetsov, A.},
      author={Kwa\'{s}nicki, M.},
       title={Fractional {L}aplace operator and {M}eijer {G}-function},
        date={2017},
        ISSN={0176-4276},
     journal={Constr. Approx.},
      volume={45},
      number={3},
       pages={427\ndash 448},
         url={https://doi.org/10.1007/s00365-016-9336-4},
}

\bib{garabedian}{article}{
      author={Garabedian, P.R.},
       title={A partial differential equation arising in conformal mapping},
        date={1951},
        ISSN={0030-8730},
     journal={Pacific J. Math.},
      volume={1},
       pages={485\ndash 524},
         url={http://projecteuclid.org/euclid.pjm/1103052020},
}

\bib{fth}{incollection}{
      author={Garofalo, N.},
       title={Fractional thoughts},
        date={2019},
   booktitle={New developments in the analysis of nonlocal operators},
      volume={723},
   publisher={Amer. Math. Soc., Providence, RI},
       pages={1\ndash 135},
        note={Contemp. Math., vol. 723, Amer. Math. Soc., Providence, RI},
}

\bib{ggs}{book}{
      author={Gazzola, F.},
      author={Grunau, H.-C.},
      author={Sweers, G.},
       title={Polyharmonic boundary value problems},
      series={Lecture Notes in Mathematics},
   publisher={Springer-Verlag, Berlin},
        date={2010},
      volume={1991},
        ISBN={978-3-642-12244-6},
         url={https://doi.org/10.1007/978-3-642-12245-3},
        note={Positivity preserving and nonlinear higher order elliptic
  equations in bounded domains},
}

\bib{gr13}{incollection}{
      author={Grunau, H.-C.},
      author={Robert, F.},
       title={Uniform estimates for polyharmonic {G}reen functions in domains
  with small holes},
        date={2013},
   booktitle={Recent trends in nonlinear partial differential equations. {II}.
  {S}tationary problems},
      series={Contemp. Math.},
      volume={595},
   publisher={Amer. Math. Soc., Providence, RI},
       pages={263\ndash 272},
         url={https://doi.org/10.1090/conm/595/11811},
}

\bib{gs1}{article}{
      author={Grunau, H.-C.},
      author={Sweers, G.},
       title={A clamped plate with a uniform weight may change sign},
        date={2014},
        ISSN={1937-1632},
     journal={Discrete Contin. Dyn. Syst. Ser. S},
      volume={7},
      number={4},
       pages={761\ndash 766},
         url={https://doi.org/10.3934/dcdss.2014.7.761},
}

\bib{gs2}{article}{
      author={Grunau, H.-C.},
      author={Sweers, G.},
       title={In any dimension a ``clamped plate'' with a uniform weight may
  change sign},
        date={2014},
        ISSN={0362-546X},
     journal={Nonlinear Anal.},
      volume={97},
       pages={119\ndash 124},
         url={https://doi.org/10.1016/j.na.2013.11.017},
}

\bib{HJS}{article}{
      author={Hedenmalm, H.},
      author={Jakobsson, S.},
      author={Shimorin, S.},
       title={A biharmonic maximum principle for hyperbolic surfaces},
        date={2002},
        ISSN={0075-4102},
     journal={J. Reine Angew. Math.},
      volume={550},
       pages={25\ndash 75},
         url={https://doi.org/10.1515/crll.2002.074},
}

\bib{jsw}{article}{
      author={Jarohs, S.},
      author={Salda{\~ n}a, A.},
      author={Weth, T.},
       title={A new look at the fractional {P}oisson problem via the logarithmic {L}aplacian},
        date={2020},
        ISSN={0022-1236},
     journal={J. Funct. Anal.},
      volume={279},
      number={11},
       pages={108732},
}

\bib{km}{article}{
      author={Keady, G.},
      author={McNabb, A.},
       title={The elastic torsion problem: solutions in convex domains},
        date={1993},
        ISSN={1171-6096},
     journal={New Zealand J. Math.},
      volume={22},
      number={2},
       pages={43\ndash 64},
}

\bib{kkm89}{article}{
      author={Kozlov, V.A.},
      author={Kondrat\cprime~ev, V.A.},
      author={Maz\cprime~ya, V.G.},
       title={On sign variability and the absence of ``strong'' zeros of
  solutions of elliptic equations},
        date={1989},
        ISSN={0373-2436},
     journal={Izv. Akad. Nauk SSSR Ser. Mat.},
      volume={53},
      number={2},
       pages={328\ndash 344},
         url={https://doi.org/10.1070/IM1990v034n02ABEH000649},
}

\bib{nakai-sario}{article}{
      author={Nakai, M.},
      author={Sario, L.},
       title={On {H}adamard's problem for higher dimensions},
        date={1977},
        ISSN={0075-4102},
     journal={J. Reine Angew. Math.},
      volume={291},
       pages={145\ndash 148},
}

\bib{rg12}{article}{
      author={Render, H.},
      author={Ghergu, M.},
       title={Positivity properties for the clamped plate boundary problem on
  the ellipse and strip},
        date={2012},
        ISSN={0025-584X},
     journal={Math. Nachr.},
      volume={285},
      number={8-9},
       pages={1052\ndash 1062},
         url={https://doi.org/10.1002/mana.201100045},
}

\bib{rj}{article}{
      author={Ros-Oton, X.},
      author={Serra, J.},
       title={Local integration by parts and {P}ohozaev identities for higher
  order fractional {L}aplacians},
        date={2015},
        ISSN={1078-0947},
     journal={Discrete Contin. Dyn. Syst.},
      volume={35},
      number={5},
       pages={2131\ndash 2150},
         url={https://doi.org/10.3934/dcds.2015.35.2131},
}

\bib{survey}{unpublished}{
      author={Salda\~{n}a, A.},
       title={On fractional higher-order {D}irichlet boundary value problems:
  between the {L}aplacian and the bilaplacian},
        date={2020},
        note={To appear in Contemp. Math. (Amer. Math. Soc.), preprint
  available at arXiv:1810.08435},
}

\bib{SKM93}{book}{
      author={Samko, S.G.},
      author={Kilbas, A.A.},
      author={Marichev, O.I.},
       title={Fractional integrals and derivatives},
   publisher={Gordon and Breach Science Publishers, Yverdon},
        date={1993},
        ISBN={2-88124-864-0},
}

\bib{shapiro-tegmark}{article}{
      author={Shapiro, H.S.},
      author={Tegmark, M.},
       title={An elementary proof that the biharmonic {G}reen function of an
  eccentric ellipse changes sign},
        date={1994},
        ISSN={0036-1445},
     journal={SIAM Rev.},
      volume={36},
      number={1},
       pages={99\ndash 101},
         url={https://doi.org/10.1137/1036005},
}

\bib{sweers}{article}{
      author={Sweers, G.},
       title={An elementary proof that the triharmonic {G}reen function of an
  eccentric ellipse changes sign},
        date={2016},
        ISSN={0003-889X},
     journal={Arch. Math. (Basel)},
      volume={107},
      number={1},
       pages={59\ndash 62},
         url={https://doi.org/10.1007/s00013-016-0909-z},
}

\bib{sweers-correction}{article}{
      author={Sweers, G.},
       title={Correction to: {A}n elementary proof that the triharmonic {G}reen
  function of an eccentric ellipse changes sign},
        date={2019},
        ISSN={0003-889X},
     journal={Arch. Math. (Basel)},
      volume={112},
      number={2},
       pages={223\ndash 224},
         url={https://doi.org/10.1007/s00013-018-1274-x},
}

\bib{T78}{book}{
      author={Triebel, H.},
       title={Interpolation theory, function spaces, differential operators},
      series={North-Holland Mathematical Library},
   publisher={North-Holland Publishing Co., Amsterdam-New York},
        date={1978},
      volume={18},
}

\end{biblist}
\end{bibdiv}
\end{document}